\newcommand{\x}{\mathbf x}
\newtheorem{lemma}{\bf Lemma}
\newtheorem{proposition}{\bf Proposition}
\newtheorem{theorem}{\bf Theorem}
\newtheorem{corollary}{\bf Corollary}
\renewenvironment{proof}{\noindent {\bf Proof: }}{\rm\\}
\theoremstyle{definition}
\newtheorem{remark}{Remark}{\rm}
{\rm}
\renewcommand{\p@algorithm}{\arabic{algorithm}\expandafter\@gobble}
\newcounter{step}[algorithm]
\newcommand\STEP[2][\(\triangleright\)]{%
	\refstepcounter{step}
	\vskip 0.25\baselineskip
	\item[]\hskip -\algorithmicindent #1 \textbf{Step \arabic{step}}%
	\ifthenelse{\equal{\unexpanded{#2}}{}}{}{ (\texttt{#2})}%
	\textbf{.}%
}
\newenvironment{algo}{\algo}{}
\def\algo#1\end{%
	\noindent\fbox{%
	\begin{minipage}[b]{\dimexpr\columnwidth-\algorithmicindent\relax}
	\begin{algorithmic}
	#1
	\end{algorithmic}
	\end{minipage}
	}%
\end}
\newcommand{\afunc}[1]{\operatorname{\mathsf{#1}}}
\begin{document}

\title[Control of an anti-stable wave]{Boundary feedback control of an anti-stable wave equation}
\author{Pierre APKARIAN$^1$}
\author{Dominikus NOLL$^{2}$}
\thanks{$^1$ONERA, Department of System Dynamics, Toulouse, France}
\thanks{$^2$Institut de Math\'ematiques, Universit\'e de Toulouse, France}
\date{}

\begin{abstract}
We discuss boundary control of a  wave equation with a non-linear anti-damping boundary condition.
We design structured finite-dimensional $H_\infty$-output feedback controllers which stabilize the infinite dimensional system exponentially in closed loop.
The method is applied to control torsional vibrations in drilling systems with the goal
to avoid slip-stick.
\\

\noindent
{\bf Key words:} 
Wave equation $\cdot$ boundary feedback control $\cdot$  anti-damping boundary $\cdot$  infinite-dimensional $H_\infty$-control
$\cdot$ slip-stick $\cdot$ torsional vibrations $\cdot$ large magnitude sector non-linearity\end{abstract}
\maketitle


\section{Introduction}
We discuss $H_\infty$-boundary feedback control of a
wave equation with instability caused by boundary anti-damping. This is applied
to the  control of vibrations in drilling devices.
The system we consider is of the form
\begin{align}
\label{system}
G_{nl}:
\begin{split}
x_{tt}(\xi,t) &= x_{\xi\xi}(\xi,t) - 2 \lambda x_t(\xi,t)  \qquad 0 < \xi < 1, t \geq 0 \\ 
x_\xi(1,t) &= -x_t(1,t) +u(t) \\
\alpha x_{tt}(0,t) &= x_\xi(0,t) + q x_t(0,t) + \psi\left(  x_t(0,t) \right)
\end{split}
\end{align}
where $(x,x_t)$ is the state, $u(t)$ the control, and  where the measured outputs are
\begin{align}
\label{outputs}
y_1(t) = x_t(0,t), \quad y_2(t) = x_t(1,t).
\end{align}
The non-linearity $\psi$ satisfies $\psi(0)=0$, $\psi'(0)=0$, and the steady state
is $(x,x_t,u) = (0,0,0)$.  The linearized system $G$ 
is obtained from (\ref{system}) by dropping the term $\psi(x_t(0,t))$.

The parameters satisfy
$\lambda \geq 0$, $\alpha \geq 0$, while $q$ is signed.
System (\ref{system})  was first discussed
in \cite{tucker:99,challamel:00,tucker:03} in the context of oil-well drilling. The author of \cite{challamel:00} proves open-loop stability of (\ref{system}) for the 
case $q < 0$ using Lyapunov's direct method. Since applications typically lead to the opposite case $q > 0$, 
where instability occurs,
various control strategies have been proposed for that setting.  

Lyapunov's direct method is used in
\cite{saldivar:13,saldivar:16,barreau:18b,basturk:17}.  This
leads to infinite-dimensional controllers which are not
implementable,  or to observer-based controllers, which due to their lack of robustness
are out of favor since the late 1970s.

Delay system techniques are used in \cite{fridman:10,saldivar:09,saldivar:13,saldivar:15,cheng:17,apkarian:19}, but  require $\lambda = 0$,
which leads to an oversimplified model.
Input shaping  is used in \cite{pilbauer:18}, but as presented, also requires the un-damped model $\lambda=0$. 

Backstepping control is used in \cite{smyshlyaev:09,roman,bresch_krstic:14,roman_back,sagert,davo}, but with the exception of
\cite{bresch_krstic:14}, where $\lambda=\alpha =0$, leads to infinite dimensional or state feedback controllers, which are not implementable.   
Infinite dimensional controllers can also be obtained with the method in \cite{barreau:18}.
 Other ideas
to avoid slip-stick include
the design of feedforward startup trajectories \cite{aarsnes:18},
or manipulation of the weight on bit  in \cite{saldivar:09,saldivar:13}. 
 Model (\ref{system}), (\ref{outputs})  has also been used to control axial vibrations, see \cite{saldivar:13,beji:17}, and for robotic drilling
 \cite{beji:17}.  
 
What these approaches have in common is that they are guided by the {\it method of proof} of infinite-dimensional
 stability.  This leads to impractical control laws. 
In contrast,
approaches guided by {\it practical considerations}  have also been applied to oil-well drilling \cite{serrarens,besselink}, 
but those use finite-dimensional approximate
models. 
This makes it desirable to bridge between both
approaches by designing  practical controllers using the infinite-dimensional model (\ref{system}).
In the present work we
design $H_\infty$-controllers  with
the following requirements:
\begin{itemize}
\item[(a)] The controller is output feedback and of a simple, implementable structure, like a reduced-order  controller or a PID.
\item[(b)] The controller stabilizes the  infinite-dimensional system (and not just a finite-dimensional approximation of it).
\item[(c)] 
$H_\infty$-optimality of the controller is certified in closed loop with the infinite-dimensional system (and not just with a finite-dimensional approximation).  
\item[(d)] Due to the achieved infinite-dimensional $H_\infty$-performance, 
slip-stick is avoided, or at least mitigated.
\end{itemize}
 
 These requirements are achieved by going through the steps of
the following general  $H_\infty$-control scheme, which we proposed
for boundary and distributed control of PDEs in
 \cite{ANR:17,AN:18,apkarian:19}, where it has already been applied successfully to a variety of applications.

 \begin{algorithm}[!ht]
\caption{Infinite-dimensional $H_\infty$-design \label{algo1}}
\begin{algo}
\STEP{Steady-state} 
Compute steady state of non-linear system $G_{\rm nl}$ and obtain linearization $G$. Compute
transfer function $G(s)$ and determine number $n_p$ of unstable poles of $G$.  
\STEP{Stabilize}
Fix practical controller structure $K(\x)$, and compute initial stabilizing controller $K(\x^0)$ for $G$.
Use Nyquist test to certify stability of linear infinite-dimensional closed loop.
\STEP{Performance}
Determine plant $P$ with $H_\infty$-performance and robustness specifications,  addressing in particular the non-linearity.
\STEP{Optimize}
Solve discretized infinite-dimensional multi-objective $H_\infty$-optimization program using a non-smooth trust region or bundle method \cite{AN:18,ANR:17}.
\STEP{Certificate}
Certify final result in infinite-dimensional system within  pre-specified tolerance level as in \cite{AN:18,ANR:17}. 
\end{algo}
\end{algorithm}

While some of the elements of algorithm \ref{algo1} are standard, others  need to be adapted to
the current case and to be explained in detail. In section \ref{drilling}, the mechanical model for control
$G_{\rm nl}$ will be derived. Its linearization $G$, transfer function, and open-loop properties will be discussed in sections \ref{linear} and \ref{pattern}.
Locally exponentially 
stabilizing controllers will be synthesized in section  \ref{stabilize}, and $H_\infty$-synthesis for the full, non-linear model 
in section \ref{synthesis} will complete the procedure. Numerical results are regrouped in section \ref{numerics}.

\section{Model of drilling system}
\label{drilling}
We derive model (\ref{system}) from the setup of an oil-well drilling system,  shown schematically in Fig. \ref{oil-well}. 
The state of the system is described by the angular position $\theta(\xi,t)$ and angular speed $\theta_t(\xi,t)$
of the drillstring, where
position $\xi = 0$ refers to the rotary table (top), while $\xi = L$ represents the drill bit (bottom hole assembly), with
$L$ the length of the string.
The dynamic equation and boundary conditions are
\begin{align}
\label{slip-stick1}
\begin{split}
GJ \theta_{\xi\xi}(\xi,t) &= I \theta_{tt}(\xi,t) + \beta \theta_t(\xi,t) \quad 0 < \xi < L, t \geq 0 \\
I_B \theta_{tt}(L,t) &= -GJ \theta_\xi(L,t) - \phi\left(  \theta_t(L,t) \right) \\
\theta_t(0,t) &= \frac{GJ}{c_a} \theta_\xi(0,t) + \Omega(t)
\end{split}
\end{align}
where $G$ is the angular shear modulus, $J$ is the geometrical moment of intertia, 
$I$ is the inertia of the string, $I_B$ is the lumped inertia of the bottom hole assembly,
$c_a$ is related to the local torsion of the drillstring, $\Omega(t)$ is the time-dependent rotational velocity 
coming from the rotary table at the top, used to drive and  control the system,
while the undriven bottom extremity (bit) is subject to a
torque $\phi(\theta_t)$ representing rock-bit and mud-bit interaction of the drill bit, depending non-linearly on the rotary speed
$\theta_t(L,t)$ at the bottom; \cite{challamel:00,timoshenko,tucker:99,tucker:03}. The torsional excitations of the drillstring caused by the frictional force $\phi(\theta_t(L,t))$
lead to  twisting of the string, and this effect propagates along the structure from bit to top as a wave with damping
factor $\beta > 0$. Similarly, alterations in the rotary speed $\Omega(t)$ at the top are transmitted to bottom
by the same damped wave. This implies that a control action at the top will be delayed by one period of the wave
before taking effect at the bottom. 
If measurements are taken only at the top, then the delay before a control action takes effect is even two periods.

\begin{figure}[!ht]
\begin{center}
    \includegraphics[scale=1.0,valign=c]{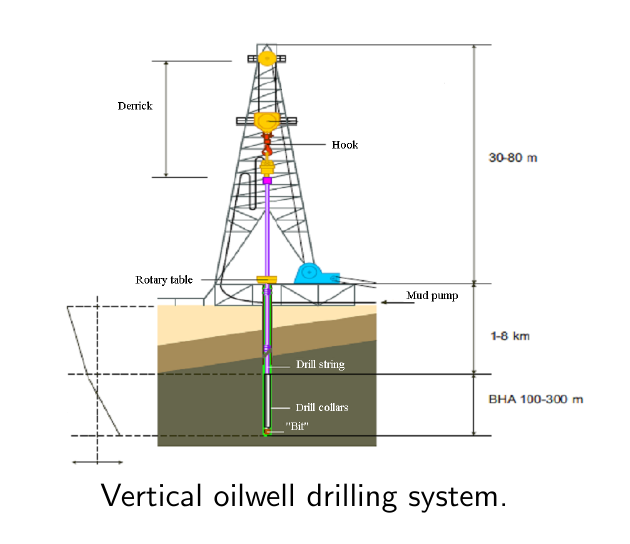}
    \includegraphics[scale=0.6,valign=c]{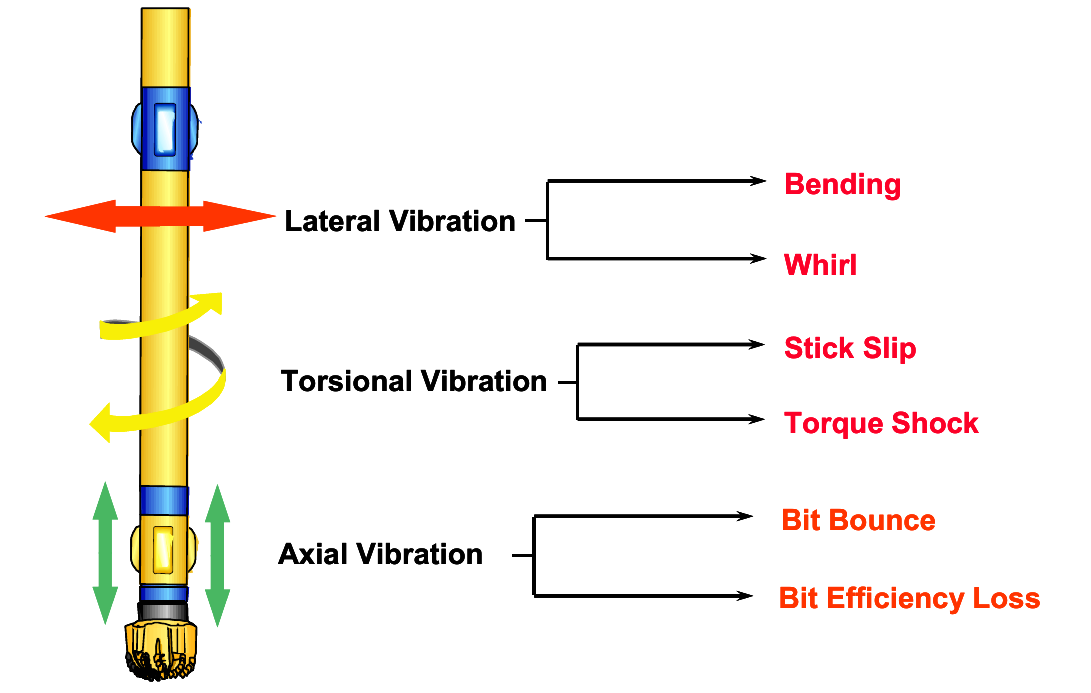}
\end{center}
\caption{ \label{oil-well}}
\end{figure}

The goal of the active control scenario is to maintain the system at steady state with constant 
rotational velocity $\theta_t(L,t) =\Omega$ at the drill bit position $\xi = L$ by acting on the driving rotary force $\Omega(t)$, and using measurements of the rotary speed
at top and bottom.  The steady state solution of (\ref{slip-stick1})  is easily obtained as
\[
\theta^0(\xi,t) = \Omega t - \left( \frac{\phi(\Omega) + \beta \Omega L}{GJ}  \right) \xi + \frac{\beta \Omega}{2GJ} \xi^2
\]
and this corresponds to applying a constant  control torque
$\Omega(t) = \Omega_0$ at the top, where
\[
\Omega_0 = \Omega + \frac{\phi(\Omega)+\beta\Omega L}{c_a}. 
\]
Writing the state in the form $\theta(\xi,t) = \theta^0(\xi,t) + \vartheta(\xi,t)$ for an off-set variable
$\vartheta(\xi,t)$,  and subtracting the steady state from (\ref{slip-stick1}), we obtain the equivalent system 
\begin{align}
\label{slip-stick2}
\begin{split}
GJ\vartheta_{\xi\xi}(\xi,t) &= I\vartheta_{tt}(\xi,t) + \beta \vartheta_t(\xi,t) \\
I_B \vartheta_{tt}(L,t) &= -GJ\vartheta_\xi(L,t) + \phi(\Omega) -  \phi\left(\Omega + \vartheta_t(L,t)\right) \\
GJ \vartheta_\xi(0,t) &= c_a\left(\vartheta_t(0,t) - \Omega(t) +\Omega_0  \right)
\end{split}
\end{align}
A dimensionless system is now obtained by the change of variables
\[
\xi = L(1-\zeta) \quad \tau = \frac{1}{L} \sqrt{\frac{GJ}{I}} t.
\]
On putting $x(\zeta,\tau) = \vartheta(\xi,t)$, this leads to the following equivalent dimensionless form
\begin{align}
\label{slip-stick3}
\begin{split}
x_{\zeta\zeta}(\zeta,\tau) &= x_{\tau\tau}(\zeta,\tau) + \frac{\beta L}{\sqrt{GJI}} x_\tau(\zeta,\tau) \\
\frac{I_B}{LI} x_{\tau\tau}(0,\tau) &= x_\zeta(0,\tau) + \frac{L}{GJ} \left( \phi(\Omega) - \phi\left( \Omega + \frac{1}{L} \sqrt{\frac{GJ}{I}}  x_\tau(0,\tau)\right)   \right) \\
x_\zeta(1,\tau) &= - \frac{c_a}{\sqrt{GJI}} x_\tau(1,\tau) + \frac{c_aL}{GJ} \left( \Omega(t) - \Omega_0  \right)
\end{split}
\end{align}
We re-write the second boundary condition of (\ref{slip-stick3})  at $\zeta=1$ as
\[
x_\zeta(1,\tau) + x_\tau(1,\tau) = \left( 1-\frac{c_a}{\sqrt{GJI}}  \right) x_\tau(1,\tau) + \frac{c_aL}{GJ} \left( \Omega(\tau) - \Omega_0 \right).
\]
Taking into consideration that the measured outputs of (\ref{slip-stick1}) are the angular velocities at the top and bottom positions
$y_1(t) = \theta_t(L,t)$, $y_2(t)= \theta_t(0,t)$, the outputs of the
centered system (\ref{slip-stick3}) may be understood as measurements of the offset angular velocities 
$y_1(\tau) = x_\tau(0,\tau)$ and $y_2(\tau) = x_\tau(1,\tau)$.  This allows us to introduce the control
\[
u(\tau) =  \left( 1-\frac{c_a}{\sqrt{GJI}}  \right) x_\tau(1,\tau) + \frac{c_aL}{GJ} \left( \Omega(\tau) - \Omega_0 \right),
\]
which when chosen in feedback form $u(\tau) = K(y_1(\tau),y_2(\tau))$ leads to the following final feedback control law for
(\ref{slip-stick1}): 
\[
\Omega(t) = \Omega_0 + \left[ K(y_1(t),y_2(t)) + \left( \frac{c_a}{\sqrt{GJI}} -1 \right) y_2(t) \right] \frac{GJ}{c_aL},
\]
which is linear as soon as $u=Ky$ is a linear controller.  With that the second boundary condition takes indeed the form
$x_\zeta(1,\tau) + x_\tau(1,\tau) = u(\tau)$  in (\ref{system}).

Switching back for convenience to $t$ for time and $\xi\in [0,1]$ for the spatial variable, 
and introducing the dimension free parameters
\begin{equation}
\label{parameters}
\alpha = \frac{I_B}{LI}, \quad \lambda = \frac{\beta L}{2 \sqrt{GJI}}, \quad q = -\frac{\phi'(\Omega)}{\sqrt{GJI}},
\end{equation}
system (\ref{slip-stick3}) turns into the form (\ref{system}) with the non-linearity given by
\begin{equation}
    \label{NL}
\psi(\omega) = \frac{L}{GJ} \left( \phi(\Omega) - \phi\left(  \Omega + \frac{1}{L} \sqrt{\frac{GJ}{I}}\omega \right)  \right) - q\cdot \omega,
\end{equation}
with
\[
\psi'(\omega)=-
\frac{\phi'\left(\Omega+\frac{1}{L}\sqrt{\frac{GJ}{I}}\omega\right)}
{\sqrt{GJI}}-q, \quad
\psi''(\omega)=-
\frac{\phi''\left(\Omega+\frac{1}{L}\sqrt{\frac{GJ}{I}}\omega\right)}{LI}.
\]
From the definition of $q$
it can be readily seen that $\psi(0)=0$ and $\psi'(0)=0$, which complies with the requirement in (\ref{system}). For later use, 
we
introduce an additional parameter
\[
p := \psi''(0),
\]
which represents the curvature of the non-linearity at the reference position, and gives information on its severity.

\begin{figure}[!ht]
\includegraphics[scale=1.0]{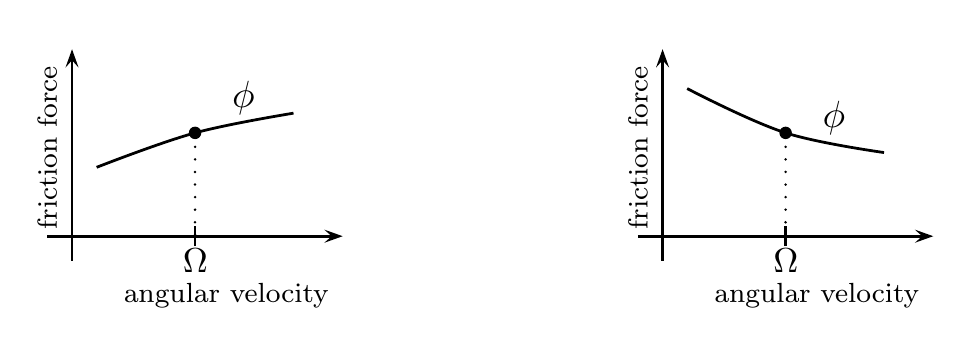}
\caption{The case $\phi'(\Omega)>0$ (left) leads to a stable open loop \cite{challamel:00}. The potentially unstable case (right) is when increasing rotary speed reduces friction.}
\end{figure}

Phenomenological models of the frictional force $\phi(\cdot)$ have been proposed in the literature. For instance \cite{navarro:07}
considers  a model of the form
\[
\phi(\theta_t) = \phi_{\rm mud}(\theta_t) + \phi_{\rm rock}(\theta_t),
\]
where the mud friction is assumed of viscous form $\phi_{\rm mud}(\theta_t) = c_b \cdot \theta_t$, while the rock-bit interaction
is the non-linear 
\begin{equation}
    \label{rock}
\phi_{\rm rock}(\theta_t) = W_{ob} R_b  \left[  \mu_{cb} + \left( \mu_{sb} - \mu_{cb}  \right) e^{-\frac{\gamma_b}{\nu_f}| \theta_t|} \right]    {\rm sign}(\theta_t),
\qquad
\phi_{\rm mud}(\theta_t) = c_b \cdot \theta_t.
\end{equation}
Here $W_{ob}$ is the weight on bit, $R_b$ is the radius of the drill, the non-linear term
features the static and Coulomb friction coefficients $\mu_{sb}, \mu_{cb}\in (0,1)$, while the coefficient $\gamma_b\in (0,1)$ is the velocity decrease rate accounting
for the Stribeck effect. 
The fact that $\mu_{sb} > \mu_{cb}$ is the ultimate reason why
the slip-stick phenomenon may occur.  Namely, for $\Omega > 0$
we have $\phi'(\Omega) = c_b - W_{ob} R_b (\gamma_b/\nu_f) \left( \mu_{sb} - \mu_{cb} \right) e^{-(\gamma_b/\nu_f)\Omega}$, which leads to
\[
q = \frac{-c_b + W_{ob} R_b (\gamma_b/\nu_f) \left( \mu_{sb} - \mu_{cb} \right) e^{-(\gamma_b/\nu_f)\Omega}}{\sqrt{GJI}}
\]
which is typically positive due to dominance of the rock-bit  over the mud-bit interaction. In contrast, the curvature
parameter
\[
p = \psi''(0) = -\frac{1}{LI} \phi''(\Omega) = - \frac{W_{ob}R_b \left( \mu_{sb}-\mu_{cb}  \right)  (\gamma_b/\nu_f)^2 e^{-(\gamma_b/\nu_f)\Omega}}{LI}
\]
is typically negative.

{\small
\begin{table}[h]
\title{Five scenarios of physical parameters}
\begin{tabular}{|| c || c | c | c | c | c | c ||}
\hline \hline
       & gray & blue & magenta & red & green & \\
       \hline\hline
$G$ &   79.3e9    &    79.3e9     &  79.3e9      &   79.3e9       &   79.3e9        & ${\rm N\cdot m^{-2}}$ \\
\hline
$J$ &  1.19e-5     & 1.19e-5         & 1.19e-5       & 1.19e-5         & 1.19e-5          & ${\rm m}^4$ \\
\hline
$I$  &   0.095   &  0.095        & 0.095        & 0.095         &  0.095         & ${\rm kg\cdot m}$ \\
\hline
$I_B$ & 89 & 35.6         &    35.6     &  35.6        &    89       & ${\rm kg\cdot m}^2$ \\
\hline
$L$ &   1172  &   2050       &   1172      &  2050         &  1172         & ${\rm m}$ \\
\hline
$\Omega$& 10 &  10 &   10       &  10         &   10         &${\rm rad\cdot s}^{-1}$ \\
\hline
$c_a$& 2000  & 2000       &  2000        &  2000         &   2000         & ${\rm kg\cdot m}^2\cdot {\rm s}^{-1}$\\
\hline
$\beta$&0.1 & 0.16       & 0.01   / 0.5      & 0.01/0.1          &  0.02         & ${\rm N\cdot s}$ \\
\hline
$W_{ob}$& 97347& 146020.5  &     146020.5     &    146020.5        &     146020.5       & ${\rm N}$ \\
\hline
$R_b$&0.155575&  0.18202275     &   0.2022475      &     0.2333625       &0.2022475& ${\rm m}$ \\
\hline
$\mu_{sb}$&0.8& 0.8 & 0.8          &  0.8          &  0.8           & rad \\
\hline
$\mu_{cb}$&0.5 & 0.5 & 0.5          &  0.5          &    0.5         & rad \\
\hline
$\gamma_b$&0.9&0.1& 0.1        &  0.1           &  0.1            & -- \\
\hline
$\nu_f$&1 & 1&   1              &  1          &  1           & ${\rm rad}\cdot {\rm s}^{-1}$ \\
\hline
$c_b$&0.03&0.03&0.03&0.03&0.03& {\rm N m s}\\
\hline\hline
\end{tabular}
\caption{}
\end{table}
}

{\small
\begin{table}[h]
\title{Derived parameters of five scenarios}\\
\begin{tabular}{|| c || c | c | c | c | c | c ||}
\hline \hline
                      & gray & blue & magenta & red & green & \\
       \hline\hline
$\Omega_0$&   15.02      &    19.75      &      21.94    &    20.50    &   19.13       &   ${\rm rad} \cdot {\rm s}^{-1}$ \\
\hline
$-\frac{\Omega L \sqrt{I}}{\sqrt{GJ}}$ &-3.7186&-6.5044&-3.7186&-6.5044&-3.7186& rad\\
\hline
$\lambda$ &  0.1957         &  0.5477        & 0.9786          &  0.03423      &   0.0391       &  -- \\
\hline
$\alpha$   &      0.7994      &    0.1828      & 0.3197          & 0.1828       &  0.7994        & --\\
\hline
$q$          &    0.0019         &      0.9796     &  1.0885        &  1.2559      &  1.0885        & -- \\
\hline
$p$          &   -0.0048          &    -0.1506      & -0.2927          & -0.1931       &  -0.2927        & -- \\
\hline
$\tau/t$   &     2.6892        & 1.5374          & 2.6892         &  1.5374      &  2.6892        & ${\rm s}^{-1}$  \\
\hline
$n_p$ & 0 & 2  &2  & 1& 1&  \\
\hline
$n_z$ &0   & 4  &2    &22   &  4  & \\
\hline\hline
\end{tabular}
\caption{}
\end{table}
}

\section{Analysis of the linear system $G$}
\label{linear}
In this section we determine the number of unstable poles of the linearization $G$ of $G_{\rm nl}$, as this
will be needed later to assure stability of the closed loop.
This discussion is of independent interest, as in a different context the specific form of the non-linearity $\psi(x_t)$ may be unknown, in which case a
linear parametric robust synthesis in  $q$ may be required.

We recall from \cite{challamel:00} that the  open loop $G_{\rm nl}$ is stable for $\phi' > 0$, and the same is true for its linearization $G$.
This means that we may  concentrate on the potentially instable case 
$\phi' \leq  0$, which means $q \geq  0$.
Our goal is to classify the open loop properties of $G$ as a function of the three
parameters $(q,\alpha,\lambda) \in \mathbb R_+^3$.

Laplace transformation of (\ref{system}) leads to
a family of one-dimensional boundary value problems parametrized by $s\in \mathbb C$:
\begin{align}
\label{laplace}
G:
\begin{split}
x_{\xi\xi} (\xi,s) &= (s^2+2\lambda s) x(\xi,s) \\
x_\xi(1,s) &= -sx(1,s) + u(s) \\
x_\xi(0,s) &= (\alpha s^2 -qs) x(0,s)
\end{split}
\end{align}
which we solve explicitly. With the outputs $y_1(s) = sx(0,s)$, $y_2(s) = s x(1,s)$ from (\ref{outputs})  we obtain
\begin{align}
\label{TF}
G(s)
=
\begin{bmatrix}
\displaystyle
\frac{y_1(s)}{u(s)} \\ \\ \displaystyle\frac{y_2(s)}{u(s)}
\end{bmatrix}
=
 \begin{bmatrix}
 \displaystyle
\frac{1}{\frac{e^\sigma-e^{-\sigma}}{2\sigma} \left[ \frac{\sigma^2}{s} + \alpha s^2 - qs \right] + \frac{e^\sigma+e^{-\sigma}}{2} \left[ \alpha s -q+1 \right]}\\
\\
\displaystyle
\frac{\frac{e^\sigma+e^{-\sigma}}{2} + (\alpha s^2-qs) \frac{e^\sigma-e^{-\sigma}}{2\sigma}}{\frac{e^\sigma-e^{-\sigma}}{2\sigma} \left[ \frac{\sigma^2}{s} + \alpha s^2 - qs \right] + \frac{e^\sigma+e^{-\sigma}}{2} \left[ \alpha s -q+1 \right]}\end{bmatrix},
\quad
\sigma(s): = \sqrt{s^2 + 2 \lambda s}.
\end{align}
We now have to determine the number of unstable poles of (\ref{TF}) as a function of $(q,\alpha,\lambda)\in \mathbb R_+^3$.
Note that $G(s)= [1/d(s) ; n(s)/d(s)]$ is a meromorphic function, with $n(s), d(s)$ in (\ref{TF}) holomorphic,
but its analysis is more complicated than that of a pure delay system 
due to  the damping coefficient $\lambda$ and the consequent appearance of the term $\sigma(s)$.

Annihilating
$d(s) = \left( s + 2\lambda + \alpha s^2-qs  \right) \frac{e^\sigma-e^{-\sigma}}{2\sigma} + (\alpha s -q+1) \frac{e^\sigma+e^{-\sigma}}{2}=0$ 
leads to the
complex equation
\begin{equation}
\label{poles1}
q - \alpha s -1 = \frac{2\lambda} {s +  \frac{(e^\sigma+e^{-\sigma})/2}{(e^\sigma-e^{-\sigma})/2\sigma}  } =: \Phi(\lambda,s),
\end{equation}
which  relates unstable pole $s\in \overline{\mathbb C}_+$ of $G$ and damping coefficient $\lambda > 0$ to the pair
$(q,\alpha)$ through the operator $\Phi$. Since this is a complex equation and $q,\alpha$ are real, we deduce
\begin{equation}
\label{poles}
\alpha = -\frac{{\rm Im}\, \Phi(\lambda,s)}{{\rm Im}(s)}, \quad
q-1 =  \frac{ {\rm Re} \,\Phi(\lambda,s) {\rm Im}(s) - {\rm Im} \, \Phi(\lambda,s) {\rm Re}(s) }{{\rm Im}(s)}.
\end{equation}
We have proved the following
\begin{lemma}
Let $\lambda > 0$ and $s \in \overline{\mathbb C}_+$. Suppose $(q,\alpha)$ given by {\rm (\ref{poles})} is in $\mathbb R_+^2$. Then
$s$ is an unstable pole of $G$ for the parameters $(q,\alpha,\lambda)\in \mathbb R_+^3$.
\hfill $\square$
\end{lemma}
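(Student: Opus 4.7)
The statement essentially records what the preceding calculation has established, so my proof plan is to walk carefully through that derivation in reverse: given $\lambda>0$ and $s\in\overline{\mathbb C}_+$, I want to exhibit $(q,\alpha)$ as defined by (\ref{poles}) and verify that these values force $d(s)=0$ in the denominator of $G(s)$.

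The first step is to recall the denominator
$$d(s) = \left( s + 2\lambda + \alpha s^2-qs \right)\frac{e^\sigma-e^{-\sigma}}{2\sigma} + (\alpha s -q+1)\frac{e^\sigma+e^{-\sigma}}{2},$$
and to algebraically rearrange $d(s)=0$, as the authors do, into the single scalar identity $q-\alpha s -1 = \Phi(\lambda,s)$ of (\ref{poles1}). The small but important check here is that the division by $s + \frac{(e^\sigma+e^{-\sigma})/2}{(e^\sigma-e^{-\sigma})/2\sigma}$ is legitimate; this denominator can vanish only at isolated points of $\overline{\mathbb C}_+$, and one observes that at any such exceptional $s$ either $d(s)\ne 0$ (so $s$ is not a pole and the claim is vacuous for that $(q,\alpha)$), or the identity (\ref{poles1}) must be re-derived by clearing denominators. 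So the reduction to (\ref{poles1}) covers the generic case.

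Next, since $q$ and $\alpha$ are \emph{real}, I decompose (\ref{poles1}) into its real and imaginary parts. Writing $s=\mathrm{Re}(s)+i\,\mathrm{Im}(s)$, the imaginary part of the LHS is $-\alpha\,\mathrm{Im}(s)$, yielding the first formula in (\ref{poles}); subtracting $\alpha\,\mathrm{Re}(s)$ from the real part of the LHS and solving then produces the second formula. Provided $\mathrm{Im}(s)\ne 0$, both quantities are well-defined, and the hypothesis $(q,\alpha)\in\mathbb R_+^2$ is exactly what guarantees that these uniquely determined real numbers are admissible physical parameters. For the degenerate case $\mathrm{Im}(s)=0$ the system (\ref{poles}) is of $0/0$ type and must be read as the single real constraint $q-\alpha s-1=\Phi(\lambda,s)$, admitting a one-parameter family of admissible $(q,\alpha)$; the argument then still applies to any such pair in $\mathbb R_+^2$.

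Finally, I plug the $(q,\alpha)$ so constructed back into $d(s)$. By construction both real and imaginary parts of $q-\alpha s-1-\Phi(\lambda,s)$ vanish, hence $d(s)=0$, and since $s\in\overline{\mathbb C}_+$ this gives $s$ as an unstable pole of $G$ for the parameter triple $(q,\alpha,\lambda)\in\mathbb R_+^3$. The main conceptual point — not really an obstacle but worth flagging — is the passage from a single complex equation to the two real equations (\ref{poles}); this is what pins down both $q$ and $\alpha$ from a single candidate pole location, and is the sole content of the lemma.
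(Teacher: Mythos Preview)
Your proposal is correct and follows exactly the approach of the paper: the lemma is simply a restatement of the derivation leading from $d(s)=0$ to (\ref{poles1}) and then to (\ref{poles}), and the paper in fact gives no separate proof beyond the remark ``We have proved the following''. Your write-up is actually more careful than the paper's, since you flag the degenerate cases where the denominator of $\Phi$ vanishes or where $\mathrm{Im}(s)=0$, neither of which the paper addresses explicitly.
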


Let us look at poles on the imaginary axis
$j\mathbb R$, referred to as {\it zero-crossings}. Going back to (\ref{poles1}) with $s=j\omega$ gives
\begin{lemma}
Let $\lambda > 0$ and $\omega\in \mathbb R$. Suppose the pair
\[
q-1 = {\rm Re}\, \Phi(\lambda,j\omega), \quad \alpha = -\frac{{\rm Im}\,\Phi(\lambda,j\omega)}{\omega}
\]
satisfies $(q,\alpha)\in \mathbb R_+^2$. Then $j\omega$ is a zero crossing (unstable pole on $j\mathbb R$) of
$G$ for the parameter $(q,\alpha,\lambda)\in \mathbb R_+^3$.
\hfill $\square$
\end{lemma}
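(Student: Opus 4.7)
The plan is to deduce this statement as a direct specialization of the preceding lemma by evaluating the formulas (\ref{poles}) on the imaginary axis. Since $\overline{\mathbb C}_+ = \{s : \operatorname{Re}(s)\ge 0\}$ contains the entire imaginary axis, the previous lemma applies whenever $s = j\omega$ with $\omega\in\mathbb R$ (and $\omega\ne 0$, so that the division by $\operatorname{Im}(s)$ is legitimate).

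First I would substitute $\operatorname{Re}(s)=0$ and $\operatorname{Im}(s)=\omega$ into (\ref{poles}). The expression for $\alpha$ immediately reduces to
\[
\alpha = -\frac{\operatorname{Im}\Phi(\lambda,j\omega)}{\omega},
\]
and the second identity collapses because the term involving $\operatorname{Re}(s)$ vanishes, leaving
\[
q-1 = \frac{\operatorname{Re}\Phi(\lambda,j\omega)\cdot\omega - \operatorname{Im}\Phi(\lambda,j\omega)\cdot 0}{\omega} = \operatorname{Re}\Phi(\lambda,j\omega).
\]
These are exactly the hypotheses of the current lemma. Invoking the previous lemma, the hypothesis $(q,\alpha)\in\mathbb R_+^2$ ensures that $s=j\omega$ is an unstable pole of $G$ for the parameter triple $(q,\alpha,\lambda)\in\mathbb R_+^3$, and since by construction $s=j\omega$ lies on $j\mathbb R$, it is a zero-crossing in the sense defined above.

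The only point that requires a brief check is that $\Phi(\lambda,j\omega)$ is actually well defined at the chosen $\omega$, i.e.\ that the denominator appearing in (\ref{poles1}) does not vanish there; this is automatic once we assume the resulting $(q,\alpha)$ are finite real numbers, since otherwise the formulas above would not produce values in $\mathbb R_+^2$ to begin with. Consequently no genuine obstacle arises: the lemma is essentially the real-and-imaginary-part decomposition of the complex characteristic equation (\ref{poles1}) specialised to $s\in j\mathbb R$, and the main step is the routine algebraic simplification of (\ref{poles}) under $\operatorname{Re}(s)=0$.
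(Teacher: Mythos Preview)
Your proposal is correct and follows the same approach as the paper: the paper simply states ``Going back to (\ref{poles1}) with $s=j\omega$ gives'' and then records the lemma without further proof, so your substitution of $s=j\omega$ into (\ref{poles}) and appeal to the preceding lemma is exactly the intended argument, with the routine algebra made explicit.
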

Let us look more specifically at zero-crossings through the origin. Substituting $s=0$ in the denominator $d(s)$ in  (\ref{TF}) 
and equating $d(0)=0$ 
gives
the relation
\[
q-1 = 2 \lambda_{\rm crit}
\]
which says that for $q > 1$ a real  pole of $G$ crosses the imaginary axis through the origin at the critical value $\lambda = \lambda_{\rm crit}$.
Here we use the fact that $\Phi(\lambda,0)=2\lambda$, explained by the relations  
\begin{equation}
    \textstyle\frac{e^\sigma-e^{-\sigma}}{2\sigma} = 1 + \frac{\sigma^2}{3!} + \frac{\sigma^4}{5!} + \dots,  \frac{e^\sigma+e^{-\sigma}}{2}
= 1 + \frac{\sigma^2}{2!} + \frac{\sigma^4}{4!} + \dots,   \frac{\sigma^2}{s} = s + 2\lambda.
\end{equation}

\begin{theorem}
\label{theorem1}
For fixed $(q,\alpha,\lambda)\in \mathbb R_+^3$ there exists $R > 0$ such that
$G$ has no poles and no transmission zeros on $\{s \in \overline{\mathbb C}_+: |s|  \geq  R\}$.
\end{theorem}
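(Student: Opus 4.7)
The plan is a uniform asymptotic analysis of $d(s)$ and of $n(s)$ (the numerator of the second component of $G(s)$) as $|s|\to\infty$ in $\overline{\mathbb C}_+$. First, expand $\sigma(s)=\sqrt{s^2+2\lambda s}=s+\lambda+O(1/|s|)$ uniformly on the closed right half-plane, using the branch with positive real part. This yields $e^{\pm\sigma(s)}=e^{\pm(s+\lambda)}(1+O(1/|s|))$, so that $|e^\sigma|\geq c_1>0$ and $|e^{-\sigma}|\leq c_2<\infty$ uniformly for $\mathrm{Re}(s)\geq 0$, $|s|\geq R_0$.

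Second, write $d(s)=\tfrac12 e^\sigma A_d(s)+\tfrac12 e^{-\sigma}B_d(s)$ and $n(s)=\tfrac12 e^\sigma A_n(s)+\tfrac12 e^{-\sigma}B_n(s)$ with rational coefficients, and substitute the expansion of $\sigma$ to obtain
$A_d(s)=2\alpha s+(2(1-q)-\alpha\lambda)+O(1/|s|)$, $B_d(s)=\alpha\lambda+O(1/|s|)$, $A_n(s)=\alpha s+(1-q-\alpha\lambda)+O(1/|s|)$, $B_n(s)=-\alpha s+(1+q+\alpha\lambda)+O(1/|s|)$.
For $\alpha>0$ the $e^\sigma$-bracket in $d(s)$ has order $|s|$ while the $e^{-\sigma}$-bracket stays bounded, so $|d(s)|\geq \alpha c_1|s|/2-C\to\infty$. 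For $\alpha=0,q\neq 1$ the leading constant $2(1-q)\neq 0$ dominates and $|d(s)|\geq c_1|1-q|>0$. The isolated corner $\alpha=0,q=1$ collapses to $d(s)=2\lambda\sinh\sigma/\sigma$, whose zeros satisfy $\sigma=jk\pi$, i.e.\ $s=-\lambda\pm\sqrt{\lambda^2-k^2\pi^2}$, all strictly in the open left half-plane when $\lambda>0$.

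For $n(s)$ with $\alpha>0$ both brackets are of order $|s|$, and the leading part is $n(s)\sim\alpha s\sinh\sigma\sim\alpha s\sinh(s+\lambda)$. Off the imaginary axis $|\sinh(s+\lambda)|$ grows exponentially with $\mathrm{Re}(s)$ and dominates. On the imaginary axis $s=j\omega$ the identity $|\sinh(a+jb)|^2=\sinh^2 a+\sin^2 b$ gives $|\sinh(\lambda+j\omega)|\geq\sinh\lambda>0$, so $|n(j\omega)|\geq c\,|\omega|\sinh\lambda\to\infty$. The residual cases $\alpha=0$ are dealt with by direct inspection of the simplified expressions, comparing moduli of the two surviving exponential terms $(1-q)e^\sigma$ and $(1+q)e^{-\sigma}$.

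The main obstacle is the imaginary axis, where neither $|e^\sigma|$ nor $|e^{-\sigma}|$ dominates and where naive triangle-inequality bounds can cancel; the lower estimate $|\sinh(\lambda+j\omega)|\geq\sinh\lambda$ is the key tool and crucially uses $\lambda>0$. The limiting case $\lambda=0$ must be treated separately: there $\sigma\equiv s$, and $d,n$ reduce to ordinary quasi-polynomials whose zero structure can be read off in closed form.
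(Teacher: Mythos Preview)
Your approach is correct and takes a genuinely different route from the paper's. The paper does not expand $d(s)$ and $n(s)$ directly; instead it rewrites the pole condition as $q-1-\alpha s=\Phi(\lambda,s):=2\lambda/(s+\theta\sigma)$ with $\theta=\coth\sigma$, shows that $\{s\in\overline{\mathbb C}_+:\mathrm{Re}(\sigma)\leq a_0\}$ is bounded, and then on $\mathrm{Re}(\sigma)\geq a_0$ bounds $|\theta|$ and $|\theta+1|$ to obtain $|\Phi(\lambda,s)|=O(1/|s|)$, which is incompatible with the linear left-hand side. Zeros are handled by the reciprocal bound $|\Phi|^{-1}\leq C|s|$, forcing $|\alpha s-(q+1)|\leq C$. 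You instead keep $d$ and $n$ as explicit combinations $\tfrac12 e^{\sigma}A+\tfrac12 e^{-\sigma}B$ with rational $A,B$ and exploit directly that $\mathrm{Re}(\sigma)=\mathrm{Re}(s)+\lambda+O(1/|s|)\geq\lambda-\varepsilon$ on $\overline{\mathbb C}_+$. Both arguments rest on this same lower bound for $\mathrm{Re}(\sigma)$; the paper packages it through $\theta=\coth\sigma$, you through $\sinh\sigma$. Your route is more elementary and makes the dependence on $\lambda>0$ explicit; the paper's reformulation via $\Phi$ has the advantage of connecting directly to the parametrization (\ref{poles1})--(\ref{poles}) used for the zero-crossing analysis in Section~\ref{pattern}.

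One point to tighten: for the transmission zeros with $\alpha>0$, splitting into ``off'' versus ``on'' the imaginary axis is not yet a uniform estimate on $\{|s|\geq R\}\cap\overline{\mathbb C}_+$, and the step $n(s)\sim\alpha s\sinh(s+\lambda)$ hides error terms of size $O(e^{\mathrm{Re}(\sigma)})$ coming from the constant parts of $A_n,B_n$. Both issues disappear if you write, once $|s|$ is large enough that $\mathrm{Re}(\sigma)\geq\lambda/2$,
\[
|n(s)|\;\geq\;\tfrac12 e^{\mathrm{Re}(\sigma)}(\alpha|s|-C)-\tfrac12 e^{-\mathrm{Re}(\sigma)}(\alpha|s|+C)
\;\geq\; e^{\mathrm{Re}(\sigma)}\Bigl[\tfrac{\alpha|s|}{2}\bigl(1-e^{-\lambda}\bigr)-C'\Bigr],
\]
which is positive for all large $|s|$ without any case distinction.
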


\begin{proof}
1)
For unstable poles
we have to show that equation (\ref{poles1}), respectively, (\ref{poles}) has no solutions when $s\in \overline{\mathbb C}_+$ and
$|s|\gg 0$ sufficiently large.
Let $s = \mu + j\omega$, $\sigma = a + jb$, then by the definition of $\sigma$:
\begin{equation}
\label{s-sig}
a^2-b^2 = \mu^2-\omega^2 + 2\lambda \mu, \quad ab = \omega(\mu + \lambda).
\end{equation}
It follows that for fixed $a_0 > 0$ the set
$\{s \in \overline{\mathbb C}_+: {\rm Re}(\sigma) \leq a_0\}$ is bounded. Choose $R_1 > 0$
such that $\{s \in \overline{\mathbb C}_+: {\rm Re}(\sigma) \leq a_0\} \subset \{s\in \overline{\mathbb C}_+: |s| \leq R_1\}$.
It remains to discuss
candidate poles $s\in \overline{\mathbb C}_+$ with Re$(\sigma) \geq a_0$ for some fixed $a_0 > 0$.

2)
Consider $s\in \overline{\mathbb C}_+$ with Re$(\sigma) =a\geq a_0$ and
define
\[
\theta :=
\frac{e^\sigma+e^{-\sigma}}{e^\sigma-e^{-\sigma}} = \frac{1 + e^{-2a} e^{-j 2b}}{1-e^{-2a} e^{-j 2b}}.
\]
Then $\frac{1+\rho_0}{1-\rho_0} \geq |\theta| \geq \frac{1-\rho_0}{1+\rho_0}$, where $\rho_0 = e^{-2a_0}$.
Moreover, we have $|\theta+1| \geq \frac{2}{1+e^{-2a_0}} =: \theta_0 > 1$. 
Now
choose $\epsilon > 0$ such that
$\frac{1+\rho_0}{1-\rho_0}\, \epsilon < \theta_0/2$. 
Since $\lambda$ is fixed we have $\sigma/s \to 1$ as $s\to \infty$ on $\mathbb C^+$, hence there exists $M=M(\lambda) >0$ such that
$|s-\sigma| < \epsilon |s|$ for all $|s| \geq M$. Then
$|s+\theta \sigma| = |\theta (\sigma-s) + (\theta+1) s| \geq |\theta+1| |s| - |\theta| |s-\sigma| \geq
\theta_0 |s| - |\theta| \epsilon |s| \geq \theta_0/2 |s|$ for $|s| \geq M$.
Writing (\ref{poles1}) as
\begin{equation}\label{sim1}
q-1-\alpha s = \Phi(\lambda ,s) = \frac{2\lambda}{s + \theta\sigma},
\end{equation}
and taking into account that on the right hand side we now have
\[
|\Phi(\lambda,s)| = \frac{2\lambda}{|s+\theta \sigma|} \leq \frac{4\lambda}{\theta_0|s|} 
\]
we see that (\ref{sim1}) can have no solution for $|s| \geq \max\{\frac{4\lambda}{\alpha \theta_0} , 1+\frac{1+q}{\alpha}, M,R_1 \}=:R$.
That settles the case  $\alpha > 0$.

3)
For $\alpha = 0$ and $q\not= 1$ there are no poles in $|s| > \frac{4\lambda}{|q-1|\theta_0}$, and for
$q=1$, $\alpha = 0$ clearly (\ref{sim1}) has no solutions.

4) Let us next discuss unstable zeros. Clearly those can only occur in the second component $y_2/u$ in 
(\ref{TF}). Here the equation is $\Phi(\lambda,s)^{-1} = \frac{(q+1)s - \alpha s^2}{2\lambda}$. From 1) above
we know that we may concentrate on Re$(\sigma) \geq a_0$, and from 2)  we have
$|\theta|\leq \frac{1+\rho_0}{1-\rho_0}$, while   for $|s| > \lambda$ we get $|\sigma| \leq \sqrt{3}|s|$, hence  for 
$|s| > \max\{R_1,\lambda\}$:
\[
|\Phi(\lambda,s)| = \left| \frac{2\lambda}{s+\theta \sigma}  \right| \geq \frac{2\lambda}{|s| + |\theta| |\sigma|} \geq
 \frac{2\lambda}{|s| (1+\frac{1+\rho_0}{1-\rho_0} \sqrt{3})}. 
\]
This leads to
\[
\frac{| (q+1)s - \alpha s^2 |}{2\lambda} = |\Phi(\lambda,s)|^{-1} \leq \frac{|s| (1+\frac{1+\rho_0}{1-\rho_0}\sqrt{3})}{2\lambda}
\]
hence
\[
\alpha |s| - (q+1) \leq 
|\alpha s - (q+1)| \leq 1+\frac{1+\rho_0}{1-\rho_0}\sqrt{3}.
\]
For $\alpha > 0$ this cannot be satisfied for large $|s|$. In fact,
there are no unstable zeros
on $|s| > R:= \max\{R_1,\lambda, (2+q+\frac{1+\rho_0}{1-\rho_0}\sqrt{3})/\alpha\}$. For $\alpha = 0$
the equation for unstable zeros is $\theta\sigma = qs$, and since $\sigma/s = \sqrt{1+2\lambda/s} \to 1$ for $s\to\infty$, we get
$\theta \to q$. On choosing $a_0$ sufficiently large, we get $\theta \approx 1$, which leads to a contradiction for $q\not=1$. 
Finally, for $q=1, \alpha=0$ we obtain the transfer function
$y_2/u=\frac{s[(\sigma-s)e^\sigma +(\sigma+s)e^{-\sigma}]}{(\sigma-s)(\sigma+s)e^\sigma- (\sigma+s)^2e^{-\sigma}}$,
so unstable zeros $\not=0$ satisfy $\frac{s-\sigma}{s+\sigma}=e^{-2\sigma}$. That gives
$-\frac{\lambda}{s+\sigma+\lambda} = e^{-2\sigma}$, which cannot be satisfied for large $|s|$.
\hfill $\square$
\end{proof}

Since the transfer function $G$ is of size $2 \times 1$, the number of unstable poles
is the maximum of the number of unstable poles of $G_1(s)=1/d(s)$ and $G_2(s)=n(s)/d(s)$, 
hence the number of unstable zeros of $d(s)$.  The latter can be determined by the argument principle. 
For the following we denote the half circle used for the standard Nyquist contour
by $\afunc{D}_R$.

\begin{proposition}
Suppose $(q,\alpha,\lambda)\in \mathbb R_+^3$ does not give rise to zero crossings. Then the number $n_p$ of unstable poles
of $G(s)$ equals the winding number of $d(\afunc{D}_R)$ around $0$, where the radius  $R>0$ is as in Theorem {\rm \ref{theorem1}}.
\end{proposition}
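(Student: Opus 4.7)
The plan is to apply the argument principle to the scalar function $d(s)$ appearing in (\ref{TF}) on the closed Nyquist D-contour $\afunc{D}_R$ (the imaginary segment from $-jR$ to $+jR$ closed by the right half-circle of radius $R$), and then to identify the resulting zero count with the unstable pole count of $G$.

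First I would reduce counting unstable poles of $G$ to counting unstable zeros of $d$. The first component $G_1(s)=1/d(s)$ has a pole at every zero of $d$ and no other singularities, while every pole of $G$ is a pole of $G_1$ or of $G_2=n(s)/d(s)$ and hence comes from a zero of $d$, so the two counts agree. Any pole/zero cancellation between $n$ and $d$ in $G_2$ is immaterial, because each cancelled zero of $d$ still produces a pole of $G_1$, hence of $G$. Thus $n_p$ equals the number of zeros of $d$ in $\overline{\mathbb C}_+$, counted with multiplicity.

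Next I would check that $d(s)$ is entire. Although $\sigma(s)=\sqrt{s^2+2\lambda s}$ is multi-valued, $d$ depends on $\sigma$ only through $(e^{\sigma}+e^{-\sigma})/2=\cosh\sigma$ and $(e^{\sigma}-e^{-\sigma})/(2\sigma)=\sinh(\sigma)/\sigma$, each of which is an even entire function of $\sigma$ and hence an entire function of $\sigma^2=s^2+2\lambda s$. So $d$ is holomorphic on all of $\mathbb C$ with no poles, and the argument principle applies to $d$ on any closed contour not meeting its zero set.

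Finally I would invoke Theorem \ref{theorem1} together with the no-zero-crossings hypothesis to locate the zeros of $d$ relative to $\afunc{D}_R$. Theorem \ref{theorem1} places every zero of $d$ in $\overline{\mathbb C}_+$ inside $\{|s|<R\}$, so none lies on the half-circular arc of $\afunc{D}_R$; the no-zero-crossings hypothesis means $d(j\omega)\neq 0$ for all $\omega\in\mathbb R$, so none lies on the imaginary segment either. Hence $d$ does not vanish on $\afunc{D}_R$ and every zero of $d$ in $\overline{\mathbb C}_+$ is strictly interior to $\afunc{D}_R$. The argument principle then yields that the winding number of $d(\afunc{D}_R)$ around $0$ equals the total multiplicity of interior zeros, which by the first step equals $n_p$. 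The only nontrivial point in the whole argument is recognising the hidden $\sigma^2$-dependence that makes $d$ entire; once that is noted the conclusion is a one-line application of the argument principle.
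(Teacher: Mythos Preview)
Your proof is correct and follows essentially the same approach as the paper, which does not give an explicit proof but simply states (just before the proposition) that $n(s),d(s)$ are holomorphic, that $n_p$ equals the number of unstable zeros of $d$, and that the latter is determined by the argument principle. Your contribution is to spell out \emph{why} $d$ is entire---namely, that $\cosh\sigma$ and $\sinh(\sigma)/\sigma$ are even in $\sigma$ and hence entire in $\sigma^2=s^2+2\lambda s$---which the paper only asserts; everything else matches.
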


The radius $R$ in Theorem \ref{theorem1} may be quantified, and the winding number can be computed exactly
using the method in \cite{AN:18}. 
 If $(q,\alpha,\lambda)$ creates a zero-crossing, the contour
$\afunc{D}_R$ has to be modified, either by making small indentations into the right half plane, or preferably by removing poles on
$j\mathbb R$ with the method of \cite{chinese}, as explained in \cite{AN:18}.
%
%
%
%
%
%
%
 At this stage we have completed step 1 of our general algorithm \ref{algo1}.

We conclude this section with the following
important consequence of Theorem \ref{theorem1}.

\begin{corollary}
The input-output map and the input-to-state map of the boundary control problem {\rm (\ref{system})} are bounded.
\end{corollary}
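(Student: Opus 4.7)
The plan is to unpack what ``bounded'' means here in the sense of Weiss well-posedness: for each finite horizon $T>0$, the operators $u\mapsto (y_1,y_2)$ and $u\mapsto (x,x_t)$ are continuous in appropriate $L^2$ topologies, equivalently that the transfer function $G(s)$, together with the input-to-state transfer map $u(s)\mapsto x(\cdot,s)$, lies in $H^\infty$ on some shifted right half plane $\{\Re s \geq \sigma_0\}$.

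First I would exploit Theorem \ref{theorem1}: its proof gives not only the absence of poles for $|s|\geq R$ on $\overline{\mathbb C}_+$, but also the key estimate $|s + \theta\sigma|\geq \frac{\theta_0}{2}|s|$ for $|s|\geq M$, where $\theta_0>1$ and $\theta = (e^\sigma+e^{-\sigma})/(e^\sigma-e^{-\sigma})$ is uniformly bounded in modulus once $\Re\sigma \geq a_0>0$. Choosing $\sigma_0 > 0$ large enough, I ensure that for $\Re s \geq \sigma_0$ we simultaneously have $\Re \sigma \geq a_0$, the remaining finitely many poles lie strictly to the left of the line $\Re s = \sigma_0$, and $\sigma/s \to 1$ uniformly as $|s|\to\infty$.

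Next I would estimate each entry of $G(s)$ in (\ref{TF}) on this shifted half plane. Multiplying numerator and denominator by $2\sigma e^{-\sigma}$ and writing everything in terms of the bounded quantity $e^{-2\sigma}$, the denominator satisfies
\[
d(s) \sim \alpha\, s\, e^\sigma \qquad (|s|\to\infty, \ \Re s\geq \sigma_0),
\]
so $G_1(s)=1/d(s)$ decays like $(\alpha s)^{-1} e^{-\sigma}$ and is therefore bounded, while $G_2(s)=n(s)/d(s)$ tends to a finite limit since the leading terms in $n(s)$ and $d(s)$ are of the same order $s\,e^\sigma$. The case $\alpha=0$ is handled separately as in the proof of Theorem \ref{theorem1}. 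Combined with the fact that $G$ is holomorphic on $\{\Re s \geq \sigma_0\}$ (no poles there by the choice of $\sigma_0$) and continuous on the closed half plane, a standard maximum-type argument gives $\|G\|_{H^\infty(\Re s\geq\sigma_0)}<\infty$.

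For the input-to-state map, solving (\ref{laplace}) explicitly yields
\[
x(\xi,s) = u(s)\,\frac{\cosh(\sigma\xi) + (\alpha s - q)\,\tfrac{\sinh(\sigma\xi)}{\sigma}\cdot s}{d(s)},
\]
(up to the same denominator $d(s)$), and the same asymptotic analysis shows the kernel is uniformly bounded in $\xi\in[0,1]$ and $s$ with $\Re s\geq \sigma_0$. By Paley--Wiener/Plancherel on the weighted space $e^{-\sigma_0 t}L^2(\mathbb R_+)$, $H^\infty$-boundedness on a shifted half plane is equivalent to boundedness of the time-domain input-output and input-to-state operators on every finite interval $[0,T]$, which is the claim. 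The main obstacle is the uniformity of the asymptotic estimates of $d(s)$ and $n(s)$ as $|s|\to\infty$ with $\Re s \geq \sigma_0$, but this is already essentially contained in steps (1)--(2) of the proof of Theorem \ref{theorem1}; the remainder is bookkeeping.
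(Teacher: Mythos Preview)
Your proposal is correct and follows essentially the same route as the paper: both establish $H^\infty$-boundedness of $G$ on a shifted right half plane by asymptotic analysis of numerator and denominator, reusing the estimates $\Re\sigma\geq a_0$, $|\theta|$ bounded, and $\sigma/s\to 1$ from the proof of Theorem~\ref{theorem1}. The paper divides numerator and denominator of $G_2$ by the leading term $\alpha s^2$ (after rewriting via $\theta$) to get $G_2(s)\sim 1/3$, whereas you factor out $e^\sigma$ to get $d(s)\sim \alpha s\,e^\sigma$; these are cosmetically different bookkeepings of the same asymptotics. One minor addition on your side: you treat the input-to-state map explicitly via the solution formula and Paley--Wiener, while the paper simply invokes \cite[Thm.~2.3]{morris}, so your argument is in fact slightly more self-contained on that point.
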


\begin{proof}
As a consequence of \cite[Thm. 2.3]{morris} for input-output
boundedness it suffices to show that $\sup_{{\rm Re}(s)>a_0}|G_2(s)|< \infty$ for some 
$a_0\in \mathbb R$. 
We choose $a_0$ as in the proof of Theorem \ref{theorem1}, 
which allows to bring $\theta$ as close to 1 as we wish. Now
with the notation of the theorem
\[
G_2(s)=\frac{\sigma\theta+\alpha s^2-qs}{\sigma^2/s + \alpha s^2 -qs + 2\sigma\theta(\alpha s-q+1)}.
\]
For $\alpha > 0$ we divide numerator and denominator by
the leading term $\alpha s^2$, which gives
\[
G_2(s)=\frac{1 +\sigma\theta/\alpha s^2 - q/\alpha s}
{1+1/\alpha s + 2\lambda/\alpha s^2 -q/\alpha s + 2\theta \sigma/s + 2(1-q)\sigma\theta/\alpha s^2}
\sim\frac{1}{1+2\theta \sigma/s}.
\]
But $\sigma^2/s^2 = 1+2\lambda/s \sim 1$, whence
$G_2(s) \sim 1/3$, showing that $G_2$ is bounded
on some half plane Re$(s) > a_0$. Since $G_2=n/d$
and $G_1=1/d$, this is also true for $G_1$. In the case
$\alpha=0$ simplification by $s$ leads to a similar
estimate. 
\hfill $\square$
\end{proof}

\section{Pattern of unstable poles}
\label{pattern}
As a consequence of 
the previous section we can determine the number $n_p$ of unstable poles of $G$ for every scenario
$(q,\alpha,\lambda)\in \mathbb R_+^3$ using the argument principle. However, we would like to learn a little more about $n_p(q,\alpha,\lambda)$,
and in this section we shall see that $n_p\in \{0,1,2\}$, where the corresponding regions can be determined with arbitrary
numerical precision.

To begin with, observe that for  $\lambda = 0$
the transfer function (due to $\sigma = s$) simplifies to a pure delay system
\[
G_{\lambda=0}(s) = 
\begin{bmatrix}
\displaystyle
\frac{e^{-s}}{1+\alpha s-q} \\\\
\displaystyle
\frac{(1+\alpha s -q)+ (1-\alpha s +q)e^{-2s}}{2(1+\alpha s-q)}
\end{bmatrix}
=
\begin{bmatrix}
\displaystyle
\frac{\frac{1}{\alpha} e^{-s}}{s- \frac{q-1}{\alpha}}
\\\\
\displaystyle
\frac{1}{2} + \frac{\frac{1}{2}\left(  \frac{1+q}{\alpha} -s \right)e^{-2s}}{s-\frac{q-1}{\alpha}}
\end{bmatrix},
\]
where we immediately see that $G_{\lambda=0}$ has one unstable real pole if $q \geq 1$, while it is stable
for $q < 1$. 

This suggests now the following procedure. Fix $(q,\alpha) \in \mathbb R^2_+$,  and then follow
the evolution of the number of unstable poles $n_p(\lambda):=n_p(q,\alpha,\lambda)$ of $G$ as $\lambda$ increases from $\lambda = 0$ to $\lambda \to +\infty$. 
We know the number  of poles at $\lambda = 0$, and we expect that for very large $\lambda \gg 0$ the damping effect in the wave 
equation should lead back to stability, $n_p(\lambda \gg 0)=0$. 

Let us look again at zero crossings at the origin. We know that for $q > 1$ 
the origin is crossed when $\lambda\in [0,\infty)$ reaches the critical value $\lambda_{\rm crit} = (q-1)/2>0$.
We have to decide whether this real pole when crossing $s=0$ migrates from left to right or in the opposite direction.
Let
$s(\lambda)$ be the position of the potentially unstable pole on the real axis, that is
$d\left( s(\lambda),\lambda  \right)=0$, where $s(\lambda_{\rm crit}) = 0$. Differentiation with respect to $\lambda$ gives
\[
s'(\lambda) = -\frac{d_\lambda(s(\lambda),\lambda)}{d_s(s(\lambda),\lambda)},
\]
where
\[
\frac{\partial d}{\partial \lambda}=
2\left( 1+\frac{\sigma^2}{3!} + \dots \right) + (s+2\lambda+\alpha s^2-qs) \frac{s}{\sigma} \left( \frac{2\sigma}{3!} + \frac{4\sigma^2}{5!} + \dots \right)
+ (\alpha s-q+1) \frac{s}{\sigma} \frac{e^\sigma+e^{-\sigma}}{2}
\]
and
\begin{align*}
\frac{\partial d}{\partial s}&=(1+2\alpha s-q)\left( 1+\frac{\sigma^2}{3!}+\dots  \right) + (s+2\lambda+\alpha s^2-qs) \frac{s+\lambda}{\sigma}
\left( \frac{2\sigma}{3!}+\frac{4\sigma^3}{5!}+\dots  \right) \\
&
\qquad + \alpha \frac{e^\sigma+e^{-\sigma}}{2} + (\alpha s-q+1) \frac{e^\sigma-e^{-\sigma}}{2\sigma}(s+\lambda).
\end{align*}
Substituting $\lambda = \lambda_{\rm crit}=(q-1)/2$ and $s =s(\lambda_{\rm crit}) = 0$ gives
\[
s'(\lambda_{\rm crit}) = \frac{2}{\frac{1}{3} (q-1)^2 + (q-1) -\alpha}.
\]
Hence
\[
s'(\lambda_{\rm crit} )\left\{
\begin{matrix}
> 0 & \mbox{ for } \alpha < \frac{1}{3} (q-1)^2 + (q-1) \\
<0 & \mbox{ for } \alpha > \frac{1}{3}(q-1)^2 + (q-1)
\end{matrix}
\right.
\]
This leads to the following
\begin{lemma}
Let $(q,\alpha) \in \mathbb R_+^2$. If $\alpha < \frac{1}{3}(q-1)^2 + (q-1)$, then a single real pole of $G$ crosses the imaginary
axis through the origin at $\lambda = \lambda_{\rm crit} = (q-1)/2$ from left to right, going from stable at $\lambda_{\rm crit}-0$ to unstable at $\lambda_{\rm crit} + 0$.
If $\alpha > \frac{1}{3}(q-1)^2 + (q-1)$ a single real pole crosses the imaginary axis through the origin from right to left, going
from unstable at $\lambda = \lambda_{\rm crit} -0$ to stable at $\lambda = \lambda_{\rm crit} +0$.
\hfill $\square$
\end{lemma}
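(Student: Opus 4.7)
The plan is to appeal to the implicit function theorem applied to the real-analytic characteristic equation $d(s,\lambda)=0$ at the point $(s,\lambda)=(0,\lambda_{\rm crit})$, where $\lambda_{\rm crit}=(q-1)/2$. The partial derivatives $\partial d/\partial\lambda$ and $\partial d/\partial s$ have already been computed in the preceding display, and substituting $\sigma=0$, $s=0$, $\lambda=\lambda_{\rm crit}=(q-1)/2$ collapses the series expansions of $(e^\sigma\pm e^{-\sigma})/(\,\cdot\,)$ to $1$ (plus easily controlled higher-order pieces in $\sigma^2$), so that $\partial_s d(0,\lambda_{\rm crit})=\tfrac13(q-1)^2+(q-1)-\alpha$ and $\partial_\lambda d(0,\lambda_{\rm crit})=2$. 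These yield precisely the formula
\[
s'(\lambda_{\rm crit})=\frac{2}{\tfrac13(q-1)^2+(q-1)-\alpha}
\]
already displayed above the lemma.

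First I would verify the non-degeneracy hypothesis of the implicit function theorem, namely $\partial_s d(0,\lambda_{\rm crit})\neq 0$, which is exactly the condition $\alpha\neq\tfrac13(q-1)^2+(q-1)$ singled out in the two cases of the lemma. Under either of these hypotheses, the IFT produces a $C^1$ branch $\lambda\mapsto s(\lambda)$ of zeros of $d$ with $s(\lambda_{\rm crit})=0$. Since $d(s,\lambda)$ is real-valued whenever $s$ and $\lambda$ are real (the relations $\sigma^2/s=s+2\lambda$ together with the Taylor expansions of $(e^\sigma-e^{-\sigma})/(2\sigma)$ and $(e^\sigma+e^{-\sigma})/2$ show that $d$ is an entire function of $s$ with real coefficients when $\lambda\in\R$), this local branch $s(\lambda)$ is real-valued; it is the real pole of $G$ that we are tracking.

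Next I would simply read off the sign of $s'(\lambda_{\rm crit})$ from the displayed formula. In the case $\alpha<\tfrac13(q-1)^2+(q-1)$, the denominator is positive and $s'(\lambda_{\rm crit})>0$, so the pole moves from $\{\mathrm{Re}\,s<0\}$ to $\{\mathrm{Re}\,s>0\}$ as $\lambda$ increases through $\lambda_{\rm crit}$, i.e.\ it transits from stable to unstable. In the opposite case $\alpha>\tfrac13(q-1)^2+(q-1)$ the denominator is negative, $s'(\lambda_{\rm crit})<0$, and the pole migrates in the opposite direction, from unstable to stable.

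I do not expect a serious obstacle: the calculation of $\partial d/\partial s$ and $\partial d/\partial \lambda$ at the crossing point has already been carried out in the text, and the remainder is essentially a sign inspection. The only delicate point is checking the simplification of the two partial derivatives at $(s,\sigma)=(0,0)$, $\lambda=\lambda_{\rm crit}$: one must note that all higher-order terms in $\sigma$ vanish at $\sigma=0$, and that the factors $(\alpha s-q+1)$ and $(s+2\lambda+\alpha s^2-qs)$ evaluated at $s=0$, $\lambda=\lambda_{\rm crit}$ reduce respectively to $2-q$ and $q-1$, allowing the closed-form expression for $s'(\lambda_{\rm crit})$ quoted above.
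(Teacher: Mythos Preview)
Your approach is essentially identical to the paper's: the lemma is stated right after the computation of $s'(\lambda_{\rm crit})$ via implicit differentiation of $d(s(\lambda),\lambda)=0$, and the conclusion is just a sign reading of the displayed formula; your added justification via the implicit function theorem and the reality of the branch makes explicit what the paper leaves implicit. One small slip: you write $\partial_s d(0,\lambda_{\rm crit})=\tfrac13(q-1)^2+(q-1)-\alpha$, but the actual value (as the paper's computation gives) is the negative of this, $\alpha-(q-1)-\tfrac13(q-1)^2$; since $s'=-d_\lambda/d_s$, your quoted formula for $s'(\lambda_{\rm crit})$ is nonetheless correct and the argument goes through.
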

This can also be corroborated by investigating the value $G(0)$ in (\ref{TF}). We have
\[
G_1(0) = \frac{1}{2\lambda-q+1},
\]
so $G$ has no unstable pole at the origin, except for the critical $\lambda$ value $\lambda_{\rm crit} = (q-1)/2$ when $q > 1$. 
On the exceptional manifold $\mathbb M = \{(q,\alpha,\lambda)\in \mathbb R_+^3: 2\lambda = q-1\}$, we have
\[
\lim_{s\to 0} s G_1(s) = \frac{1}{\alpha - (q-1) - \frac{1}{3}(q-1)^2}, 
\]
which means a pole of order one at the origin, except when 
$(q,\alpha)$ lies on the parabola $\alpha = (q-1) + \frac{1}{3}(q-1)^2$. On the exceptional set
$\mathbb O=\{(q,\alpha,\lambda)\in \mathbb R_+^3: 2\lambda = q-1, \alpha = (q-1) + \frac{1}{3}(q-1)^2\}$ we find that
\[
\lim_{s\to 0} s^2 G_1(s) = \frac{6}{q^2-1},
\]
which means $G$ has a double pole at the origin, except when
$q = 1$.  The case $q=1$ now leaves only the parameter choice $(q,\alpha,\lambda)=(1,0,0)$, 
an exceptional point where the system is not well-posed.

Using the mapping $\Phi$, one can see
that the positive quadrant $(q,\alpha) \in \mathbb R_+^2$ may be divided into 5 different zones, shown in Fig. \ref{zones},
in which the number of unstable poles of $G$ evolves differently. Each zone has its specific pattern.

The red zone is ${\tt Red} = \{(q,\alpha): q \geq 1, \alpha \geq 0, \alpha \leq \frac{1}{3}(q-1)^2 + (q-1)\}$ is below a parabola. Setting
\[
m(q) = \sup\{\alpha > 0: q-1= {\rm Re}\,  \Phi(\lambda, j\omega), \alpha=-\omega^{-1}{\rm Im}\,\Phi(\lambda,j\omega)
\mbox{ for certain $\omega > 0$, $\lambda > 0$} \},
\]
the magenta zone is defined as
\[
{\tt Mag} = \{(\alpha,q): q\geq 1,  \textstyle\frac{1}{3}(q-1)^2 + (q-1)\leq \alpha \leq m(q)\}
\]
delimited by the parabola and the analytic curve $\alpha=m(q)$. 
The green zone is
\[
{\tt Green} = \{(q,\alpha): q \geq 1, \alpha \geq m(q)\},
\]
where the curve $\alpha = m(q)$ separates magenta and green.
Finally, on setting
\[
b(\alpha) = \inf\{q:  q-1= {\rm Re}\,  \Phi(\lambda, j\omega), \alpha=-\omega^{-1}{\rm Im}\,\Phi(\lambda,j\omega)
\mbox{ for certain $\omega > 0$, $\lambda > 0$} \},
\]
the blue zone is ${\tt Blue} = \{(q,\alpha): \alpha \geq 0, b(\alpha)\leq q \leq 1\}$, which is the only bounded one. The boundary of the blue zone
described by the curve $q = b(\alpha)$ is just a different local parametrization of the same
analytic curve $\alpha = m(q)$ separating magenta and green.
This curve disappears into $\alpha < 0$ at $(1,0)$, where it is no longer of interest.
The gray zone ${\tt Gray}$ is what is left over from the strip
$0 \leq q \leq 1$, $\alpha \geq 0$ when removing the blue zone.

\begin{figure}[!ht]
\includegraphics[scale=1.0]{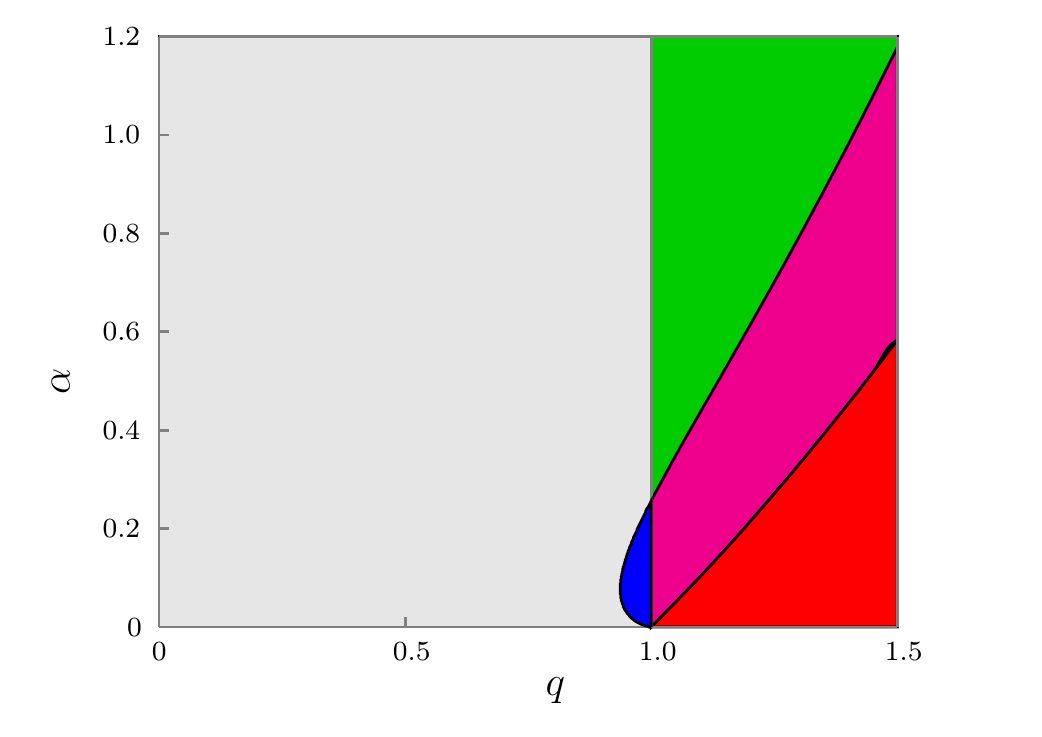}
\caption{Five regions and five scenarios.
The pattern of the gray zone is 0. Blue: 0-2-0. Red: 1-2-0. Magenta: 1-0-2-0. Green: 1-0.
\label{zones}}
\end{figure}

Altogether, we have found the following classification or pattern.
\begin{itemize}
\item For $(q,\alpha) \in {\tt Gray}$ the system $G$ is stable for all $\lambda \geq 0$. The pattern is ${\tt 0}$.
\item For $(q,\alpha) \in {\tt Blue}$ there exist $0 < \lambda_1(q,\alpha) < \lambda_2(q,\alpha)$ such that $G$
is stable for all $0\leq \lambda < \lambda_1(q,\alpha)$ and $\lambda > \lambda_2(q,\alpha)$, and has two unstable
poles for $\lambda_1 \leq \lambda \leq \lambda_2$. The pattern is {\tt 0-2-0}.
\item For $(q,\alpha)\in {\tt Red}$ the system has one unstable pole for $0 \leq \lambda \leq (q-1)/2=: \lambda_1(q)$, and two unstable poles
for 
$(q-1)/2 \leq \lambda \leq \lambda_2(q,\alpha)$,  while it is again stable for
$\lambda > \lambda_2(q,\alpha)$. The pattern is {\tt 1-2-0}.
\item For $(q,\alpha)\in {\tt Mag}$ there exist $\lambda_2(q,\alpha) > \lambda_1(q,\alpha) > (q-1)/2$ such that
the system has one unstable pole for $0 \leq \lambda \leq (q-1)/2$, no unstable poles
for $(q-1)/2 < \lambda < \lambda_1(q,\alpha)$, then  two unstable poles
for $\lambda_1(q,\alpha) \leq \lambda \leq \lambda_2(q,\alpha)$, and again no unstable poles  for $\lambda > \lambda_2(q,\alpha)$.
The pattern is {\tt 1-0-2-0}.
\item For $(q,\alpha)\in {\tt Green}$ the system has one unstable pole for $0 \leq \lambda \leq (q-1)/2$, and is stable for
$\lambda > (q-1)/2$. The pattern is {\tt 1-0}.
\end{itemize}

\section{Stabilization}
\label{stabilize}
In this section we construct finite-dimensional output feedback controllers which stabilize the linearization
$G$ of system (\ref{system})--(\ref{outputs}) exponentially. We 
start with the following

\begin{theorem}
\label{theorem2}
Let $K$ be a finite-dimensional output feedback controller for {\rm (\ref{system})-(\ref{outputs})}
which stabilizes the closed loop  in the $H_\infty$-sense. Then the closed loop is even exponentially stable.
\end{theorem}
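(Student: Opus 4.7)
The strategy is to upgrade the $H_\infty$-hypothesis---holomorphy and boundedness of the closed-loop transfer function $T_{cl}$ on $\mathbb{C}_+$---to the same on a slightly shifted half plane $\mathbb{C}_{-\delta}:=\{s:\operatorname{Re}(s)>-\delta\}$ for some $\delta>0$. Once that shifted bound is established, the input-to-state estimate of \cite[Thm.~2.3]{morris} (already invoked in the Corollary) converts it into exponential decay of the closed-loop trajectories with rate $\delta$.

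To produce $\delta$, I would write the finite-dimensional controller as $K=(K_{n,1},K_{n,2})/K_d$ with polynomial entries, so that the closed-loop characteristic function is the quasi-polynomial
\[
d_{cl}(s)=K_d(s)\,d(s)-K_{n,1}(s)-K_{n,2}(s)\,n(s),
\]
whose behaviour at infinity is dictated by $d(s)$ through the exponential factor $e^{\pm\sigma(s)}$ with $\sigma(s)=\sqrt{s^2+2\lambda s}$. The key step is to rerun the argument of Part~1) of the proof of Theorem~\ref{theorem1} on $\mathbb{C}_{-\delta_0}$ in place of $\overline{\mathbb{C}}_+$. The bounds there on $\theta$ and on $|s+\theta\sigma|$ depend only on the normalization $\operatorname{Re}(\sigma(s))\geq a_0$ together with $|s|\geq M$; because $\sigma(s)-s\to\lambda$ as $|s|\to\infty$, this normalization is still satisfied on $\mathbb{C}_{-\delta_0}\cap\{|s|\geq M\}$ whenever $\delta_0<\lambda-a_0$. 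Hence for suitable $\delta_0,R>0$, $d_{cl}$ has no zeros in $\{\operatorname{Re}(s)>-\delta_0,\,|s|>R\}$. The remaining candidate zeros lie in a compact set, are therefore finite in number ($d_{cl}$ being entire and not identically zero), and by the $H_\infty$-hypothesis none of them lies in $\overline{\mathbb{C}}_+$. Choosing $\delta\in(0,\delta_0)$ strictly smaller than the distance of that finite set to $j\mathbb{R}$ produces the sought pole-free strip.

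Boundedness of $T_{cl}$ on $\mathbb{C}_{-\delta}$ will then follow from the same leading-term asymptotics used in the proof of the Corollary, now applied on the shifted half plane via $\sigma(s)/s\to 1$. I expect the main technical hurdle to be the degenerate case $\lambda=0$, where $\sigma=s$ and the above shift argument fails outright because the plant loses its intrinsic damping. In that regime, $d_{cl}$ reduces to a quasi-polynomial of the form $P(s)+Q(s)e^{-s}+R(s)e^{-2s}$, and one should invoke Bellman--Cooke type asymptotics for the zeros, which again confine them, under the $H_\infty$-hypothesis, to a left half plane strictly to the left of $j\mathbb{R}$.
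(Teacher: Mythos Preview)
Your frequency-domain strategy is plausible as far as locating the zeros of $d_{cl}$ goes, but it has a genuine gap at the final step, and it is a different route from the paper's.

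\textbf{The gap.} You claim that boundedness of $T_{cl}$ on $\mathbb C_{-\delta}$, together with \cite[Thm.~2.3]{morris}, yields exponential decay of the closed-loop state. That theorem, however, gives only \emph{well-posedness} (bounded input-output and input-to-state maps); it does not produce exponential stability from a shifted transfer-function bound. The classical obstruction is that $T_{cl}(s)=C_{cl}(sI-A_{cl})^{-1}B_{cl}+D_{cl}$ being bounded and holomorphic on $\mathbb C_{-\delta}$ says nothing about modes that are unobservable or uncontrollable from the chosen inputs/outputs; such modes are invisible in $d_{cl}$ yet can make $\omega_0(A_{cl})\geq 0$. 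To get exponential stability via a Gearhart--Pr\"uss type argument you would need a uniform bound on the \emph{resolvent} $(sI-A_{cl})^{-1}$ on $\overline{\mathbb C}_+$, and your analysis of $d_{cl}$ does not supply this. You also take for granted that the closed loop generates a $C_0$-semigroup on a concrete state space $H$; with a dynamic boundary condition at $\xi=0$ this is not automatic and is in fact a substantial part of what the paper has to set up.

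\textbf{How the paper closes the gap.} Rather than push the transfer function into a left half plane, the paper builds an explicit well-posed state-space realization of the closed loop (by shifting the direct transmission into the plant, moving the boundary ODE into an augmented controller $\widetilde K$, and writing the remaining wave equation as a boundary control system with $C_0$-semigroup $A$). It then verifies that both $\widetilde G$ and $\widetilde K$ are exponentially stabilizable and detectable, hence so is the closed loop, and finally invokes \cite[Thm.~5.2]{Morris:1999} (equivalently \cite{weiss_rebarber:98}): for well-posed systems, exponentially stabilizable $+$ exponentially detectable $+$ $H_\infty$-stable $\Rightarrow$ exponentially stable. The stabilizability/detectability hypotheses are precisely what rule out the hidden unstable modes that your transfer-function argument cannot see. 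If you want to salvage your approach, you would need to prove detectability (e.g.\ ${\rm ker}(sI-A_{cl})\cap{\rm ker}(C_{cl})=\{0\}$ on $\overline{\mathbb C}_+$) and stabilizability separately, at which point you are essentially reproducing the paper's argument and the shifted-half-plane computation becomes unnecessary.
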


\begin{proof}
1) Suppose the boundary control problem (BCP) is written in the abstract form
\[
\dot{x} = \mathscr Ax, \quad \mathscr Px = u, \quad y = \mathscr Cx
\]
with suitable unbounded operators \cite{morris,Salamon:87,Salamon:89}, 
and let
the controller $u=Ky$ stabilize BCP in the $H_\infty$-sense. Writing $K(s) = K_1(s) + K_0$ with $K_1$
strictly proper, we see that $\widetilde{u}=K_1y$ stabilizes the modified
BCP
\[
\dot{x} = \mathscr Ax, \quad (\mathscr P - K_0 \mathscr C)x=\widetilde{u},
\quad y = \mathscr Cx
\]
in the $H_\infty$-sense, 
where $\widetilde{u}=u-K_0y$. We will use this type
of shift to arrange for a strictly proper stabilizing 
controller.

2)
We start from (\ref{system})  by performing the
change of variables  $z(\xi,t) = x_\xi(\xi,t)$, $v(t) = x_t(0,t)$, cf. \cite{sagert}, which leads to
an equivalent representation of (\ref{system}) as a PDE coupled with and  ODE:
\begin{align}
\label{equivalent1}
G:\qquad
\begin{split}
z_{tt}(\xi,t) =&z_{\xi\xi}(\xi,t) -2\lambda z_t(\xi,t)  \\
z(1,t) &= \widetilde{u}(t) \\
\alpha z_\xi(0,t) &= z(0,t) + (q+2\alpha\lambda) v(t) \\
\alpha \dot{v}(t) &= z(0,t) + qv(t)
\end{split}
\end{align}
where the new state is $(z,z_t,v)$, the measured outputs are 
$$y_1=v, \quad y_2(t) = \int_0^1 z_t(\xi,t) \, d\xi+v(t),$$ 
and
where a new control $\widetilde{u}(t) = u(t) - x_t(1,t)= u(t) - y_2(t)$ is used. Since the controller $u=Ky$
stabilizes (\ref{system}) in the $H_\infty$ sense by hypothesis, 
so does $\widetilde{u}=Ky-y_2$ 
for (\ref{equivalent1}), and since the state trajectories remain unaffected,
we may from here on prove the statement
for controller $\widetilde{u}=Ky$ and system (\ref{equivalent1}).
It is also clear that we may replace the outputs $y_1,y_2$ by equivalent
outputs
$\widetilde{y}_1=v$, $\widetilde{y}_2=\int_0^1 z_t(\xi,t)d\xi$, because
$\widetilde{y}_1=y_1$, $\widetilde{y}_2=y_2-y_1$. Then $\widetilde{u}=u-y_2=u+\widetilde{y}_1-\widetilde{y}_2$,
and the controller is $\widetilde{u}=\widetilde{K}\widetilde{y}$.
At this stage for the ease of presentation we
drop the tilde notation and write the new control
and measurements again as $u$ and $y$.

3) Let the controller $K$
have the form $u(s) = K(s)y(s) =K_1(s)y + K_0y$
with direct transmission $K_0y = k_1y_1+k_2y_2$ and strictly
proper part $K_1(s)$.
We now apply the
idea of part 1)  and shift its direct transmission into the
plant. This leads to
\begin{align}
\label{sp}
G': \qquad
\begin{split}
    z_{tt} - z_{\xi\xi} +2\lambda z_t &= 0 \\
    z(1,t) &= u - k_1y_1 - k_2 y_2\\
    \alpha z_\xi(0,t) - z(0,t) &= (q+2\alpha \lambda) v(t)\\
    \alpha\dot{v} &= qv + z(0,t)
    \end{split}
\end{align}
with the outputs as before,  now in feedback with
$u=K_1(s)y$. Note that
$K_1$ still stabilizes 
(\ref{sp}) in the $H_\infty$-sense, and since the state trajectories remain the same we may 
prove exponential stability of the loop for
this pair $G',K_1$. Since $K_1$ is strictly proper,
the controller $\dot{u}=K_1y$, respectively,
$u = \frac{1}{s}K_1(s)y=K'(s)y$,  is proper and
may be represented in state space as 
\begin{align*}
K': \qquad \begin{split}
    \dot{x}_K &= A_Kx_K + B_K^1y_1+B_K^2y_2\\
    \dot{u} &= \widetilde{u}\\
    \widetilde{u} &= C_Kx_K + d_K^1y_1 + d_K^2y_2.
    \end{split}
\end{align*}
If the original state-space realization is
$K=\left[\begin{array}{c|c}a&b\\ \hline c&d\end{array}\right]$,
then $K_1=\left[\begin{array}{c|c} a&b\\ \hline c&0 \end{array} \right]$, and
$K'=\left[\begin{array}{c|c}  a&b\\ \hline ca & cb \end{array}  \right]=: \left[\begin{array}{c|c} A_K&B_K\\ \hline C_K&D_K \end{array}   \right]$. Since $H_\infty$-stability
of the loop is not altered by these transformations, we may
prove the statement for the pair $G',K'$.

4)
We now perform a less standard manipulation, which consists in transferring parts of the
system dynamics 
(\ref{sp}) into a new augmented controller $\widetilde{K}$.
We introduce a new artificial output ${y}_3= z(0,t)$   
in (\ref{sp}),   and consider the boundary wave equation
\begin{align}
\label{equivalent2}
\widetilde{G}:\qquad
\begin{split}
z_{tt} - z_{\xi\xi} + 2 \lambda z_t &=0\\
z(1,t) &={u}(t) -k_1v-k_2{y}_2 \\
\alpha z_\xi(0,t) - z(0,t) &= (q+2\alpha \lambda) v(t) \\
{y}_2(t) &= \int_0^1 z_t(\xi,t)d\xi \\
{y}_3(t) &= z(0,t)
\end{split}
\end{align}
Here we have substituted $v=y_1$, created a new input into $\widetilde{G}$, and have now an infinite
dimensional system $\widetilde{G}$
in feedback with the extended controller 
\begin{align}
\label{extended}
    \widetilde{K}: \quad
    \begin{split}
        \alpha \dot{v}&= \hspace{2cm} q v \qquad\qquad + {y}_3\\
        \dot{x}_K&= A_Kx_K + B_K^1v  + B_K^2{y}_2 \\
        \dot{{u}} &= C_K x_K \,+ d_K^1 v   + \,d_K^2 {y}_2\\
       \dot{v} &= \hspace{2cm}\frac{q}{\alpha} v \qquad\quad + \frac{1}{\alpha}y_3
    \end{split}
\end{align}
The ODE $\alpha \dot{v}=z(0,t)+qv = qv+{y}_3$ was
shifted from
$G'$ into the new  $\widetilde{K}$, leaving us with a simpler
infinite-dimensional system $\widetilde{G}$. 
The controller $\widetilde{K}$ is $K'$ augmented
by this ODE, so is still finite dimensional, and
moreover, is also an integral controller 
with regard to its new output $v$. 
The output ${y}_1$ has disappeared from
(\ref{equivalent2}), because the corresponding dynamics are now integrated in $\widetilde{K}$. The state of (\ref{equivalent2}) is 
$(z,z_t)$, while the state of $\widetilde{K}$ is $(v,x_K)$,
to which we have to add the integrator. 
$\widetilde{K}$ is an integral controller with regard to the new output $\dot{{v}}$ from (\ref{extended}).

5) 
Our next step is to find a state-space representation of ${y}=\widetilde{G}[{u};{v}]^T$ in (\ref{equivalent2}), 
which means representing it as a well-posed boundary
control system in the sense of 
\cite{Salamon:87,Salamon:89}, 
\cite[Def. 5.2.1]{Staffans_book} or \cite{morris}.
With zero boundary conditions equation (\ref{equivalent2})  reads
\begin{align*}
    z_{tt}-z_{\xi\xi} + 2\lambda z_t &=0\\
    z(1,t)+k_2 \int_0^1 z_t(\xi,t)d\xi &= 0\\
    \alpha z_\xi(0,t)-z(0,t)&= 0\\
    z(\xi,0)=z_0(\xi), &\;\;z_t(\xi,0)=z_1(t).
\end{align*}
This has now a representation as a strongly continuous semi-group
\begin{equation}
    \label{same}
\dot{\mathfrak{z}} = \begin{bmatrix} 0 & I \\ \frac{d^2}{d\xi^2} & -2\lambda \end{bmatrix} \mathfrak{z} =: A\mathfrak{z}, \quad \mathfrak{z}(0)=\mathfrak{z}_0,
\end{equation}
where $\mathfrak{z}=(z,z_t)$, and where the generator $A$ has
$D(A)=\{(z_1,z_2)\in H^2(0,1) \times H^1(0,1): z_1(1)+k_2\int_0^1 z_2(\xi,t)d\xi=0, \alpha z_{1x}(0)-z_1(0)=0\}$ as domain in the Hilbert space $H = H^1(0,1) \times L^2(0,1)$. 
Define $\mathscr A$ with domain $D(\mathscr A)=H^2\times H^1$ by the same formula (\ref{same}), and let the projector
$\mathscr P$ with $D(\mathscr P)=D(\mathscr A)$ be defined as $\mathscr P \mathfrak{z} = \begin{bmatrix} z(1)+k_2\int_0^1z_t(\xi)d\xi\\
\alpha z_\xi(0) -z(0)
\end{bmatrix} \in \mathbb C^2$.
The boundary control of $\widetilde{G}$ has  now the abstract form
\[
\dot{\mathfrak{z}} = \mathscr A \mathfrak{z}, \quad \mathscr P\mathfrak{z} 
= \begin{bmatrix} u -k_1 v\\ k_3v\end{bmatrix}
,\quad \mathfrak{y} = \mathscr C \mathfrak{z},
\]
where $k_3:=q+2\alpha\lambda$, and
where $\mathfrak{y}=[{y}_2 , {y}_3]$, with $\mathscr C:H^1 \times L^2 \to \mathbb C^2$ bounded.
Finally we re-arrange the boundary condition by defining $\mathscr P'$ with $D(\mathscr P')=D(\mathscr P)$
as
$$\mathscr P'\mathfrak{z}=\begin{bmatrix} z(1)+k_2\int_0^1 z_t(\xi)d\xi +\alpha \frac{k_1}{k_3}z_\xi(0)-\frac{k_1}{k_3}z(0)\\
\frac{1}{k_3}(\alpha z_\xi(0)-z(0))\end{bmatrix},\quad
\mathscr P'\mathfrak{z} 
=\begin{bmatrix}u\\v\end{bmatrix}=:\mathfrak{u}.$$

In order to make this well defined, we have according to \cite[Sect. 3.3]{CurtainZwart:1995} 
to assure that $D(\mathscr A)\subset D(\mathscr P')$, $D(A) = D(\mathscr A) \cap {\rm ker}(\mathscr P')$, $A\mathfrak{z}=\mathscr A\mathfrak{z}$
on $D(A)$, and that $A$ generates a $C_0$-semi group. These are satisfied by construction. In addition, we require an
operator $B:\mathbb C^2\to H$ such that $\mathscr P' \circ B = I$, im$(B)\subset D(\mathscr A)$ and $\mathscr A \circ B$ bounded. This can be defined
by the ansatz
$$B\mathfrak{u} = \begin{bmatrix} b(\xi){u} + c(\xi) {v} \\ 0\end{bmatrix}, \quad b(\xi) = \textstyle \xi^2, \quad c(\xi)=
(k_3-k_1)\xi^2-k_3.$$
Indeed, $\mathscr P' B\mathfrak{u}= \begin{bmatrix} b(1)u+c(1)v+d_K^1(\alpha b'(0)u+\alpha c'(0)v-b(0)u-c(0)v) \\ \alpha b'(0)u+\alpha c'(0)v-b(0)u-c(0)v  \end{bmatrix}
=\begin{bmatrix} {u}\\{v}\end{bmatrix} = \mathfrak{u}$. As is well-known, (cf. \cite[Sect. 3.3]{CurtainZwart:1995}), 
the boundary wave equation may now be represented by the
state-space
\begin{equation}
    \label{equivalent3}
\dot{\mathfrak{x}} = A\mathfrak{x} - B \dot{\mathfrak{u}} +\mathscr A B\mathfrak{u}, \quad \mathfrak{x}(0)=\mathfrak{x}_0,
\end{equation}
where solutions $\mathfrak{z}$ of (\ref{equivalent2}) and $\mathfrak{x}$ of (\ref{equivalent3}) are related by $\mathfrak{x}(t)= \mathfrak{z}(t)-B\mathfrak{u}(t)$.
This can be further streamlined as
\begin{equation}
    \label{equivalent4}
\dot{\mathfrak{x}^e} = \begin{bmatrix} 0&0\\ \mathscr A B&A\end{bmatrix}\mathfrak{x}^e + \begin{bmatrix}I\\ -B \end{bmatrix} \widetilde{\mathfrak{u}},
\end{equation}
where the extended state is $\mathfrak{x}^e=(\mathfrak{u},\mathfrak{x})=({u},{v},z,z_t)$, 
and where $\widetilde{\mathfrak{u}}=\dot{\mathfrak{u}}$ has become the input.
The output operator for (\ref{equivalent4}) is now $\mathfrak{y} = \mathscr C^e \mathfrak{x}^e = \mathscr C \circ [B \;\; I]\begin{bmatrix}u\\\mathfrak{z}-Bu\end{bmatrix}=\mathscr C \mathfrak{z}$.

6) We next show that system $\widetilde{G}$, and therefore also the state-space representation (\ref{equivalent4}) with $C_0$-semi group,
is exponentially stabilizable. This can for instance be obtained from  \cite[Theorem 4.2]{sagert}, where the authors construct a state feedback controller 
which stabilizes
(\ref{equivalent1})  exponentially in the Hilbert space $H=H^1\times H^2 \times L^2$. The control law found in that reference can be arranged as a state feedback law for $\widetilde{G}$, and hence for (\ref{equivalent4}), using the same technique of shifting parts of the dynamics from
plant to controller. Alternatively, we may even use the open loop characterization of stabilizability, called optimizability in
\cite{weiss_rebarber:98}, which is equivalent to stabilizability, while offering a more convenient way to check it.

7) We now show that the controller $\widetilde{K}$
is admissible for $\widetilde{G}$ and is as a system
exponentially stabilizable. Due to shifting the direct transmission of $K$ into the plant as outlined in 1)
and put to work in (\ref{equivalent2}), the new controller
$\widetilde{K}$ in (\ref{extended}) is 
written as an integral controller, that is, its output
is $\widetilde{\mathfrak{u}}=\dot{\mathfrak{u}}=[\dot{u},\dot{v}]$, which makes it
admissible for $\widetilde{G}$.

Assuming that the original $K=\left[\begin{array}{c|c} a&b\\\hline c&d \end{array} \right]$ is stabilizable and detectable (e.g. minimal), the same is true for
$K'$ obtained in 3), so $\left[\begin{array}{c|c} A_K&B_K\\\hline C_K&D_K \end{array} \right]$ 
is stabilizable. Now the augmented controller is
$
\widetilde{K}=\left[\begin{array}{c|c} \widetilde{A}_K&\widetilde{B}_K\\\hline \widetilde{C}_K&\widetilde{D}_K \end{array} \right]$ 
with
$\widetilde{A}_K=\begin{bmatrix}A_K&B_K^1\\0&\alpha^{-1}
\end{bmatrix}$, $\widetilde{B}_K=\begin{bmatrix} B_K^2&0\\0&\alpha^{-1} \end{bmatrix}$,
$\widetilde{C}_K=\begin{bmatrix}C_K&d_K^1\\0&\alpha^{-1}
\end{bmatrix}$, $\widetilde{D}_k=\begin{bmatrix}d_K^2&0\\0&\alpha^{-1}\end{bmatrix}$. Applying the Hautus test, 
for simplicity in the case $\lambda \not=\alpha^{-1}$,
let
$v$ be an eigenvector of $A_K^T$ with unstable eigenvalue $\lambda$, then $[v\; \rho]^T$ is an eigenvector of 
$\widetilde{A}_K^T$ for $\lambda$ if $\rho=B_K^{1T}v/(\lambda-\alpha^{-1})$. Now
$\widetilde{B}_K^T [v\; \rho]^T= [v \; v(\alpha^{-1}/(\lambda-\alpha^{-1}))]^T$, and this vector
cannot be $=[0 \;0]^T$, because that would imply 
$B_K^{1T}v=0$ and $B_K^{2T}v=0$, hence $B_K^Tv=0$,
contradicting stabilizability of $[A_K,B_K,C_K,D_K]$.
Now for the eigenvalue $\alpha^{-1}$ of 
$\widetilde{A}_K^T$ we take the eigenvector
$w=[0\; 1]^T$, then $\widetilde{B}_K^Tw=[0\; \alpha^{-1}]^T
\not=[0\; 0]^T$, which proves stabilizability.

With $\widetilde{G}$ and $\widetilde{K}$ exponentially stabilizable, the closed loop $(\widetilde{G},\widetilde{K})$ is also exponentially
stabilizable (see \cite[Prop. 8.2.10(ii)(c)]{Staffans_book})  in the sense of the induced state-space realization \cite[Chap. 7]{Staffans_book}. 
The infinitesimal generator
of the closed loop will be denoted as $A_{cl}$.

8)
Next we argue that  $\widetilde{G}$ is exponentially detectable. Since $\widetilde{G}$ is exponentially stabilizable,
its semi-group satisfies the spectrum decomposition assumption,  see \cite[Theorem 5.2.6]{CurtainZwart:1995}. Since from the discussion of section
\ref{linear} we know that there are only finitely many 
right hand poles, all with finite multiplicity, a necessary and sufficient condition for 
exponential detectability is that ker$(sI-\widetilde{A}) \cap {\rm ker}(\widetilde{C}) = \{0\}$ for every $s \in \overline{\mathbb C}_+$; 
see \cite[Theorem 5.2.11]{CurtainZwart:1995}, where $(\widetilde{A},\widetilde{B},\widetilde{C})$  
refers to the state-space realization of $\widetilde{G}$ derived in 2) above.
But  that may now be checked in the frequency domain. It means that for every $s\in \overline{\mathbb C}_+$ the only solution of the Laplace transformed
system (\ref{laplace})  with $u=0$ satisfying $y_1(s)=sx(0,s)=0$, $y_2(s)=sx(1,s)=0$ is $x\equiv 0$. Now for $s \not=0$ these boundary conditions
give $x(0,s)=0$, $x(1,s)=0$, and therefore from the boundary conditions in (\ref{laplace})  $x_\xi(0,s)=0$, $x_\xi(1,s)=0$. The general solution
of the dynamic equation in (\ref{laplace}) being $x(\xi,s) =k_1e^{\sigma \xi} + k_2e^{-\sigma \xi}$, with constants depending on $s$, we get the four conditions
$k_1+k_2=0$, $\sigma(k_1-k_2)=0$, $k_1e^\sigma + k_2e^{-\sigma}=0$, $\sigma(k_1e^\sigma - k_2 e^{-\sigma})=0$, which can only be satisfied if $k_1=k_2=0$.

With $\widetilde{G}$ exponentially detectable, and $\widetilde{K}$ exponentially detectable with an argument 
similar to 7) above, the closed
loop is exponentially detectable, again by \cite[Prop. 8.2.10(ii)(c)]{Staffans_book}.

9) According to  \cite[Theorem 5.2]{Morris:1999}, a well-posed system which is exponentially stabilizable,
exponentially detectable, and at the same time $H_\infty$-stable, is already exponentially stable in the
state-space sense, i.e., the generator of its semi-group 
is exponentially stable. We apply this to the closed loop system with generator $A_{cl}$. For this result
see also \cite[Thm. 1.1]{weiss_rebarber:98}, and \cite[8.35]{engel_nagel} for a classical antecedent.
\hfill $\square$
\end{proof}

\begin{corollary}
Let $K$ be a finite-dimensional controller for  {\rm (\ref{system})}-{\rm(\ref{outputs})}, and suppose the closed loop
with $G$ has no
unstable poles. Then $K$ stabilizes $G$ exponentially and $G_{\rm nl}$  locally
exponentially.
\end{corollary}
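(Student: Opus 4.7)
The plan is to address the two conclusions in turn. The first, exponential stability of the closed loop with the linearization $G$, follows almost at once from Theorem~\ref{theorem2} once the pole-free hypothesis is upgraded to $H_\infty$-stability; the second, local exponential stability of the non-linear closed loop, follows by a standard semi-linear perturbation argument, made accessible by the PDE/ODE reformulation introduced in the proof of Theorem~\ref{theorem2}.

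For the first assertion I would show that the closed-loop transfer function $T(s)$ lies in $H_\infty$. Its candidate singularities in $\overline{\mathbb C}_+$ come from three sources: the finitely many unstable poles of $G$ (finite by Theorem~\ref{theorem1}), the finitely many poles of the finite-dimensional $K$, and the zeros of $\det(I-G(s)K(s))$ in $\overline{\mathbb C}_+$, which again form a finite set because $GK$ is bounded at infinity on a right half-plane by the preceding corollary. The standing hypothesis rules all of them out, so $T$ is analytic in a neighborhood of $\overline{\mathbb C}_+$. Combined with the half-plane boundedness of $G$ from the preceding corollary and the properness of $K$, this yields $\sup_{\mathrm{Re}(s)\geq 0}\|T(s)\|<\infty$, i.e.\ $H_\infty$-stability. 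Theorem~\ref{theorem2} then delivers exponential stability of the closed loop with $G$.

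For the second assertion I re-use the substitution $z(\xi,t)=x_\xi(\xi,t)$, $v(t)=x_t(0,t)$ from step~2 of the proof of Theorem~\ref{theorem2}. Applied to $G_{\rm nl}$ in closed loop with $K$, the transformed system agrees with the one obtained from~(\ref{equivalent1}), except that the $v$-equation acquires the extra term $\psi(v(t))$:
\begin{equation*}
\alpha\dot v(t) = z(0,t) + q\,v(t) + \psi(v(t)).
\end{equation*}
Writing the non-linear closed loop as $\dot X = A_{cl}X + F(X)$, the perturbation $F$ acts only on the finite-dimensional $v$-component through $\alpha^{-1}\psi(v)$; since $\psi(0)=0$ and $\psi'(0)=0$, it is bounded, locally Lipschitz, and quadratic at the origin, $\|F(X)\|\leq C\|X\|^2$ near $0$. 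The exponential bound $\|e^{tA_{cl}}\|\leq Me^{-\omega t}$ from the first assertion together with the variation-of-constants formula and a Gronwall bootstrap on a small ball then delivers local exponential stability.

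The main obstacle is the upgrade from ``no unstable poles'' to $H_\infty$-stability: one must simultaneously rule out accumulation of closed-loop poles on the imaginary axis and unboundedness of $T$ at infinity. Both are controlled through the preceding corollary on boundedness of the input--output map of $G$. For the non-linear part, the structural observation that makes the argument elementary is that the originally unbounded boundary non-linearity, after the change of variables of Theorem~\ref{theorem2}, collapses into a bounded non-linearity in the finite-dimensional $v$-component, so the classical semi-linear perturbation theorem applies without modification.
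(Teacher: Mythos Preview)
Your strategy for the first assertion matches the paper's: upgrade ``no unstable poles'' to $H_\infty$-stability of $T(s)$, then invoke Theorem~\ref{theorem2}. The only point where you diverge is the boundedness step. The preceding corollary gives $\sup_{\mathrm{Re}(s)>a_0}\|G(s)\|<\infty$ for some $a_0$ which, as proved there, is in general strictly positive; analyticity of $T$ on $\overline{\mathbb C}_+$ alone does not rule out blow-up on the unbounded strip $0\leq\mathrm{Re}(s)\leq a_0$ as $|\mathrm{Im}(s)|\to\infty$. The paper closes this gap by inspecting $G$ directly on $j\mathbb R$: $|d(j\omega)|\to\infty$, and in $G_2=n/d$ both numerator and denominator have leading term $\alpha s^2=-\alpha\omega^2$, so $G_2$ stays bounded on the axis. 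You can equally well note that the asymptotic analysis behind the preceding corollary actually works on all of $\{s\in\overline{\mathbb C}_+:|s|\geq R\}$, since by Theorem~\ref{theorem1} the set $\{s\in\overline{\mathbb C}_+:\mathrm{Re}(\sigma)\leq a_0\}$ is bounded; either way an extra sentence is needed.

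For the second assertion the paper simply cites Zwart's linearization theorem \cite{zwart}, whereas you spell out the standard Duhamel--Gronwall argument. Your route is legitimate and more self-contained, but one imprecision should be corrected: after the substitution $z=x_\xi$, $v=x_t(0,\cdot)$, the non-linearity $\psi(v)$ enters not only the ODE $\alpha\dot v=z(0,t)+qv+\psi(v)$ but also the boundary condition at $\xi=0$, which becomes $\alpha z_\xi(0,t)-z(0,t)=(q+2\alpha\lambda)v+\psi(v)$. So $F$ does not map into the $v$-component alone. What remains true, and what saves your argument, is that $F$ \emph{depends} only on the finite-dimensional component $v$; in the abstract representation~(\ref{equivalent4}) the extra $\psi(v)$ enters through the bounded operator $B$ (and $\mathscr A B$), so $F:H\to H$ is still a bounded map with $\|F(X)\|\leq C|v|^2\leq C\|X\|^2$ near $0$, and the bootstrap goes through unchanged.
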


\begin{proof}
The result follows from Theorem \ref{theorem2} above once we show that $\widetilde{K}$ in (\ref{extended})
stabilizes $\widetilde{G}$ in (\ref{equivalent1}) in the $H_\infty$-sense. Since the transfer functions
are not concerned by the transformations in the proof of theorem \ref{theorem2}, it suffices to 
show that $K$ stabilizes $G$ in the $H_\infty$-sense. For that we have to show that
the closed loop transfer operator
\[
T(s) = \begin{bmatrix} I&G(s)\\-K(s)&I \end{bmatrix}^{-1} =
\begin{bmatrix}
(I+KG)^{-1} & -K(I+GK)^{-1}  \\ (I+GK)^{-1} G & (I+GK)^{-1}
\end{bmatrix}
\in {\bf H}_\infty
\]
belongs to the Hardy space ${\bf H}_\infty$. Since we know by hypothesis
that $T(s)$ has no poles in $\overline{\mathbb C}_+$, this follows as soon as $T$ is bounded on $j\mathbb R$.
Since $K$ is proper, this hinges on the behavior of $G$ on $j\mathbb R$. As is easy to see, the denominator
$d(s)$ of (\ref{TF}) satisfies $\lim_{\omega \to \infty} |d(j\omega)| = \infty$, so $y_1(s)/u(s)$ is proper, and it remains to show that
$y_2(s)/u(s)$ in (\ref{TF}) is bounded on $j\mathbb R$. Dividing numerator and denominator of $n_2/d$ in (\ref{laplace}) by $(e^\sigma-e^{-\sigma})/2\sigma$,
and observing that the term $\frac{(e^\sigma + e^{-\sigma})/2}{(e^\sigma-e^{-\sigma})/2\sigma}$ is bounded
on $j\mathbb R$, we see that $y_2(s)/u(s)$ is bounded, because the leading terms in both numerator and denominator
are now $\alpha s^2 = -\alpha \omega^2$.
That proves $H_\infty$-stability of the closed loop hence exponential stability of the linear closed loop.

It remains to show that $K$ stabilizes $G_{\rm nl}$ locally exponentially.  Due to the specific form of the non-linearity, this may be obtained with
\cite{zwart}.
\hfill $\square$
\end{proof}

\begin{remark}
Semi-groups for hyperbolic equations with boundary dynamics have been investigated, e.g. in \cite{mugnolo}, but as this 
requires additional conditions, we believe that our method of simplifying the infinite-dimensional part
by augmenting the controller offers additional flexibility. 
After all
the goal is to show that the closed-loop has an
exponentially stabilizable and detectable semi-group $e^{A_{cl}t}$, not necessarily the individual parts.
\end{remark}

We have achieved that for any finite-dimensional controller $K$ for (\ref{system})-(\ref{outputs}), 
exponential stability of the infinite dimensional closed loop with $G$ can be
verified by the Nyquist test. What remains to do is actually {\it find} such
a stabilizing controller.
A straightforward idea is to use a discretization of (\ref{system}), the most obvious being finite differences
\begin{align*}
x_i(t) &= x(\xi_i,t), \xi_i = ih, i=0,\dots,N, Nh=1 \\x_\xi(\xi_i,t) &\approx \frac{x_{i+1}(t)-x_{i-1}(t)}{2h}, \; x_{\xi\xi}(\xi_i,t)\approx \frac{x_{i+1}(t) +x_{i-1}(t)-2x_i(t)}{h^2}.
\end{align*}
With the boundary condition at $\xi=0$
\[
\alpha x_0''(t) = \frac{x_1(t)-x_{-1}(t)}{2h} + qx_0'(t)
\]
we can eliminate $x_{-1}$, and with the boundary condition at $\xi=1$
\[
\frac{x_{N+1}(t)-x_{N-1}(t)}{2h} = -x_N'(t) + u(t)
\]
we  eliminate $x_{N+1}$. Putting $\tilde{x}_i = x_i'$, $i=0,\dots,N$, we get a dynamical system of order $2N+2$
\begin{equation}
\label{tridiag}
\begin{bmatrix} x'\\\tilde{x}'\end{bmatrix} = \begin{bmatrix} 0 & I \\ T & \Lambda\end{bmatrix} \begin{bmatrix} x\\\tilde{x}\end{bmatrix} 
+ \begin{bmatrix} 0 \\b \end{bmatrix} u, \quad y_1(t) = \tilde{x}_0(t), y_2(t) = \tilde{x}_N(t),
\end{equation}
with typical $A$-matrix featuring a tridiagonal $T$ and a diagonal  $\Lambda$. It comes as a mild surprise that (\ref{tridiag})
is not stabilizable, the reason being a pole/zero cancellation at the origin. 

Kalman reduction using the function {\tt minreal} from
\cite{RCT2013b} removes one state of (\ref{tridiag}) and furnishes a stabilizable system, which we use for synthesis, and
where the reduced system $A$-matrix is now no longer sparse. 
In our experiment we chose $N=50$ and synthesized
controllers of various simple structures like a sum of PIDs $u = {\tt PID}_1y_1 + {\tt PID}_2y_2$,   a 5th-order state-space controllers, or on ignoring one of the outputs, standard PID controllers  $u = {\tt PID}\,y_1$, respectively, $u = {\tt PID}\,y_2$. 
These controllers, once they stabilize the reduced finite-dimensional system, 
are then tested against the infinite dimensional system using the Nyquist test of \cite{AN:18}, which gives
an exact answer. For instance,
stabilizing  the gray and blue scenarios with the 5th-order controller given in (\ref{controllers})
leads to the Nyquist plots in Fig. \ref{nyquist_blue} for the gray and blue scenarios, and certifies infinite-dimensional stability. 
\begin{figure}[!ht]
\begin{center}
\includegraphics[scale=0.4]{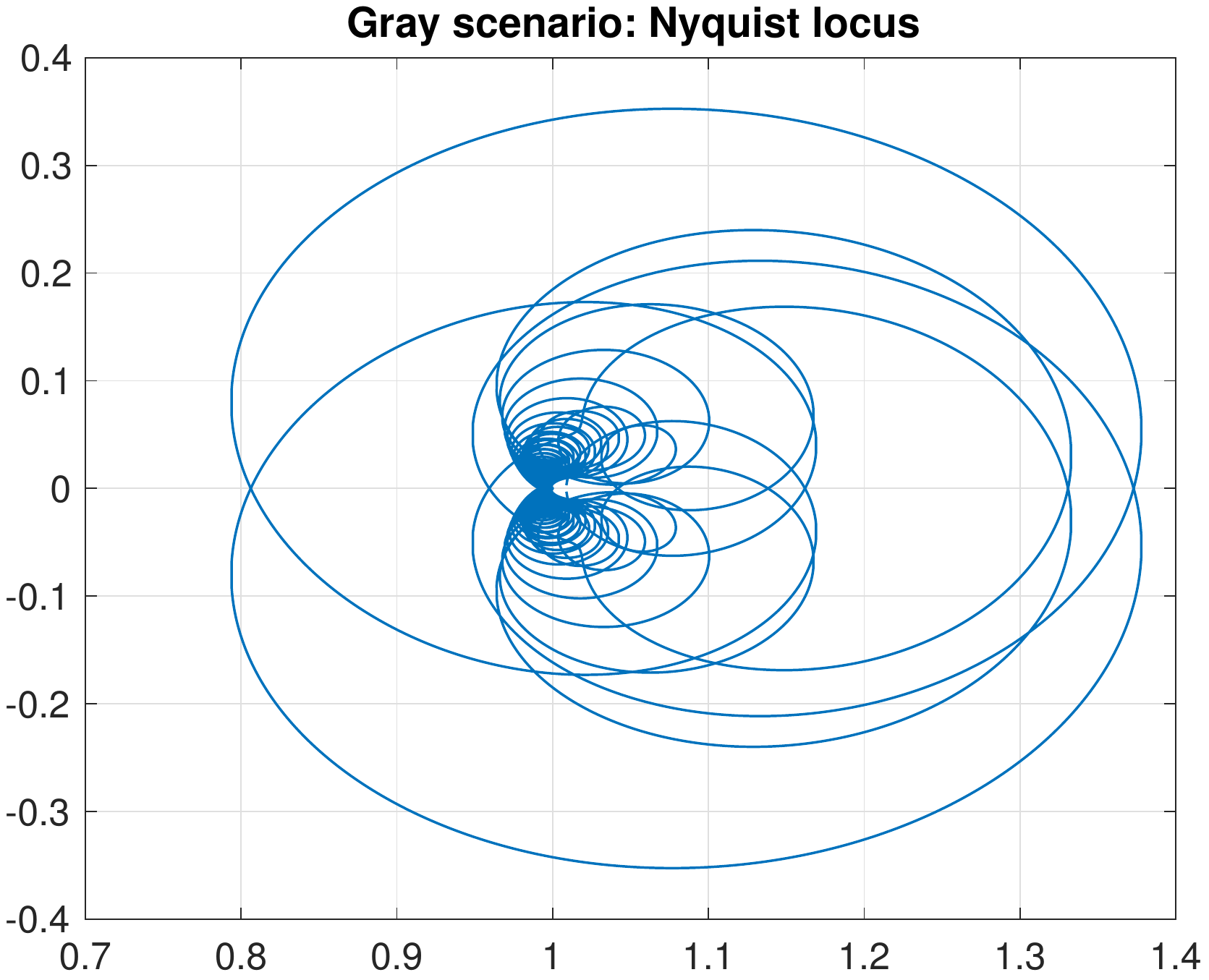}
\includegraphics[scale=0.4]{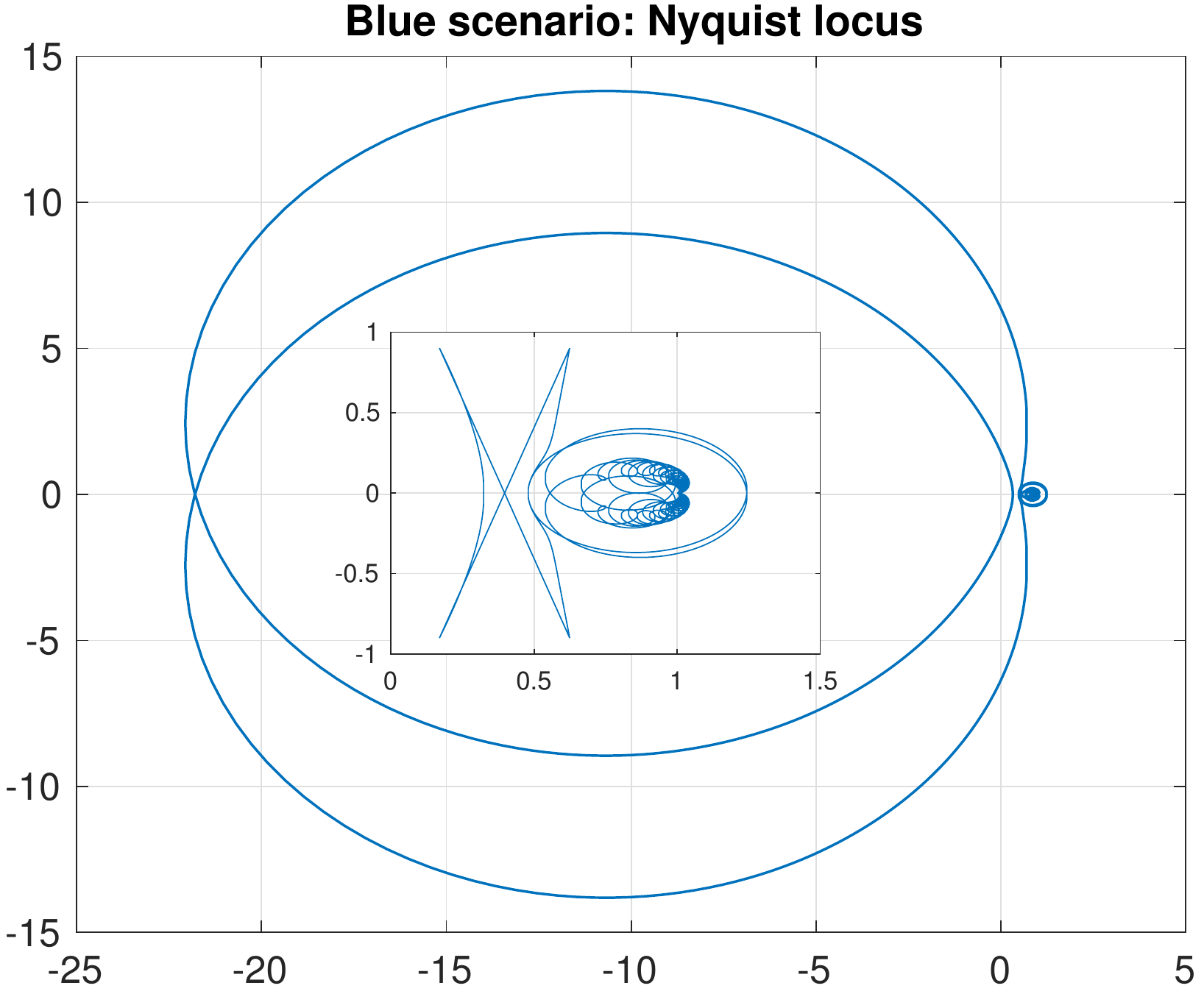}
\end{center}
\caption{Nyquist curve $1+K_{\rm blue}G_{\rm blue}$ (right) winds twice around origin. 
Since $K_{\rm blue}$ is stable and $n_p=2$, closed loop is certified exponentially stable. Gray case (left) has $n_p=0$
and winding number $0$ around origin (critical point).  Since $K_{\rm gray}$ is stable, loop is certified exponentially stable. \label{nyquist_blue}}
\end{figure}

As can be seen, in the blue case (right) the Nyquist curve winds twice around the origin. Since $K_{\rm blue}$ is stable and the open loop
$G_{\rm blue}$ has $n_p=2$ unstable poles, this proves exponential stability of the closed-loop    $(G_{\rm blue},K_{\rm blue})$.
At this point we have completed step 2 of the general synthesis algorithm \ref{algo1}. 

\begin{remark}
The fact that finite-difference and finite-element discretizations of stabilizable (or detectable) hyperbolic equations
may turn out not stabilizable (detectable) cannot be overcome by increasing $N$. 
This has been the cause of a large
body of controllability literature, which fortunately
has little relevance for control. Namely,
once we have decided that the true model
for the drilling process is the infinite-dimensional (\ref{system})-(\ref{outputs}), we little care whether  $K$,
synthesized for $G$ and $G_{\rm nl}$,
also stabilizes discretizations of $G$ or $G_{\rm nl}$. 
\end{remark}

\section{$H_\infty$-synthesis}
\label{synthesis}
The final step in algorithm \ref{algo1} is $H_\infty$-synthesis. While we have already shown that
the non-linear system can be locally exponentially stabilized by a finite-dimensional controller, we now strive to
prove global exponential stability of the closed-loop system 
$(G_{\rm nl},K)$.
In the following, it is helpful to represent the non-linear system $G_{\rm nl}$ in Lur'e form, i.e., as the closed loop interconnection of its linearization with a
static non-linearity.

\begin{figure}[!ht]
\centerline{
\includegraphics[scale=0.9]{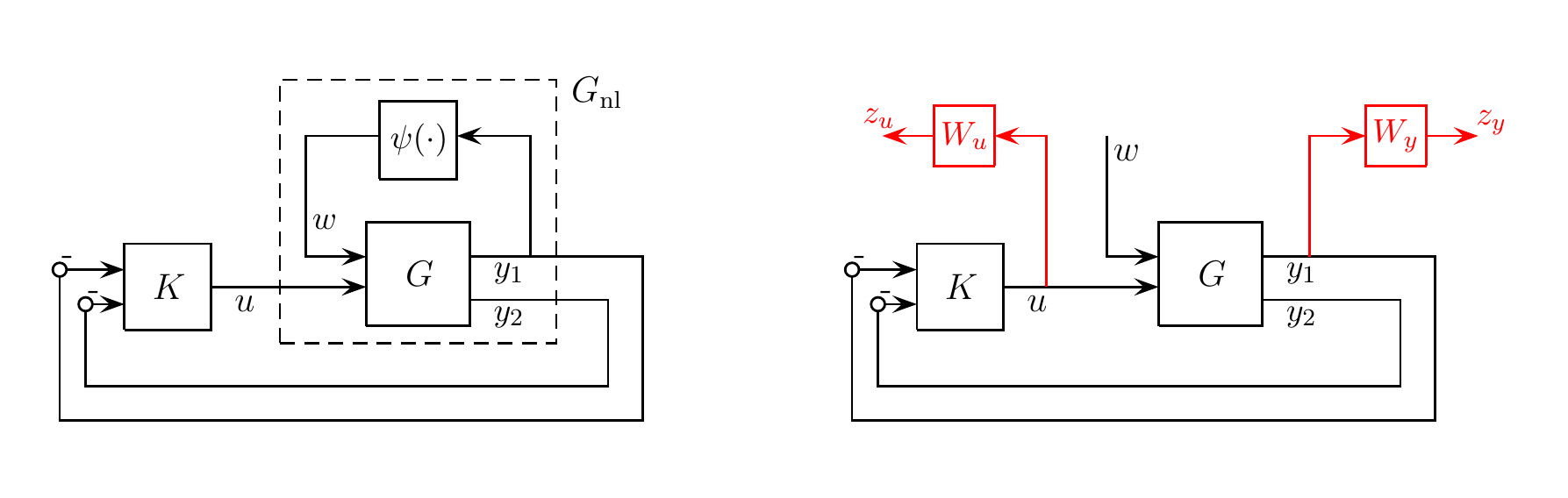}
}
\caption{Non-linear system (left) in feedback form. The synthesis interconnection (right)  interprets non-linearity as an exogenous  disturbance $w$. \label{fig_scheme}}
\end{figure}

\subsection{Mixed sensitivity}
The non-linear system can be consider as a feedback
loop between the linearized plant
\begin{align}
P:\qquad \qquad
\label{plant1}
\begin{split}
{x}_{tt}(\xi,t) &= x_{\xi\xi}(\xi,t) -2\lambda x_t(\xi,t) \\
x_\xi(1,t) &= -x_t(1,t) + u(t) \\
\alpha x_{tt}(0,t) &=x_\xi(0,t) +  qx_t(0,t) + w(t)\\
y_1(t) &=x_t(0,t), y_2(t) = x_t(1,t) , z = (y,u), 
\end{split}
\end{align}
connected with the controller
$u = Ky$ and the non-linearity $\psi(\cdot)$ as in Fig. \ref{fig_scheme} left. We now have several choices. The most straightforward one is to
grossly  interpret the non-linearity $\psi(x_t(0,t))$ as a disturbance $w$, forgetting its specific form.
In (\ref{plant1}) we then
introduce typical outputs
like $z_y=W_yy$, $z_u=W_uu$, where
the channel $w\to z_y$ rejects the effect of the non-linearity on the low-frequency part of the measured output, while
$w \to W_uu=z_u$ accounts for high frequency components of the control signal, so that
minimizing the $H_\infty$-norm of $T_{wz}(K)$ limits the degrading effects of the non-linearity while maintaining reasonable control authority. Here and for the following
$T_{ab}(K)$ denotes a closed-loop channel $b\to a$ in
plant $P$. 
The closed loop of (\ref{plant1}) with $K$ from $w$ to $z$ is obtained as 
$T_{zw}(K)= {\rm diag}(W_u,W_y) T_{(u,y),w}(K)$ as shown in  Fig. \ref{fig_scheme} right.

\subsection{Sector non-linearity}
A more sophisticated approach uses the fact that the non-linearity $\psi$ in
(\ref{NL}) induced by $\phi = \phi_{\rm mud} + \phi_{\rm rock}$ as in (\ref{rock}) is sectorial. That is to say, there exist
$q_l \leq q_u$ such that
$q_l \omega \leq \psi(\omega) \leq q_u\omega$ for all $\omega$,
i.e., (\ref{system}) is an infinite dimensional
Lur'e system. 
For the scenarios gray and blue these
sectors are shown in Fig. \ref{sector}.

\begin{figure}[htbp]
\begin{center}
\includegraphics[width=7cm, height=4.5cm]{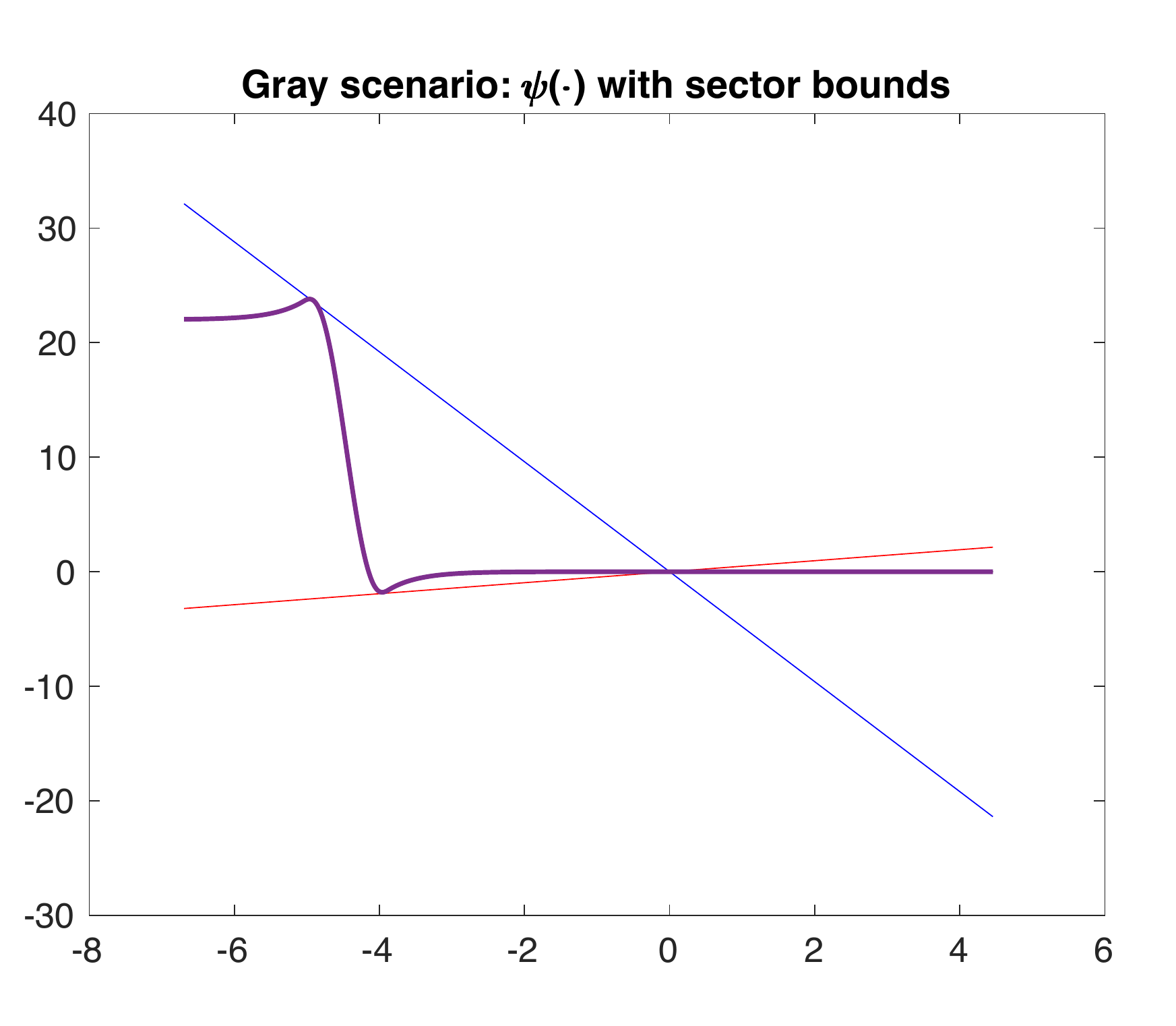}
\includegraphics[width=7cm,height=4.5cm]{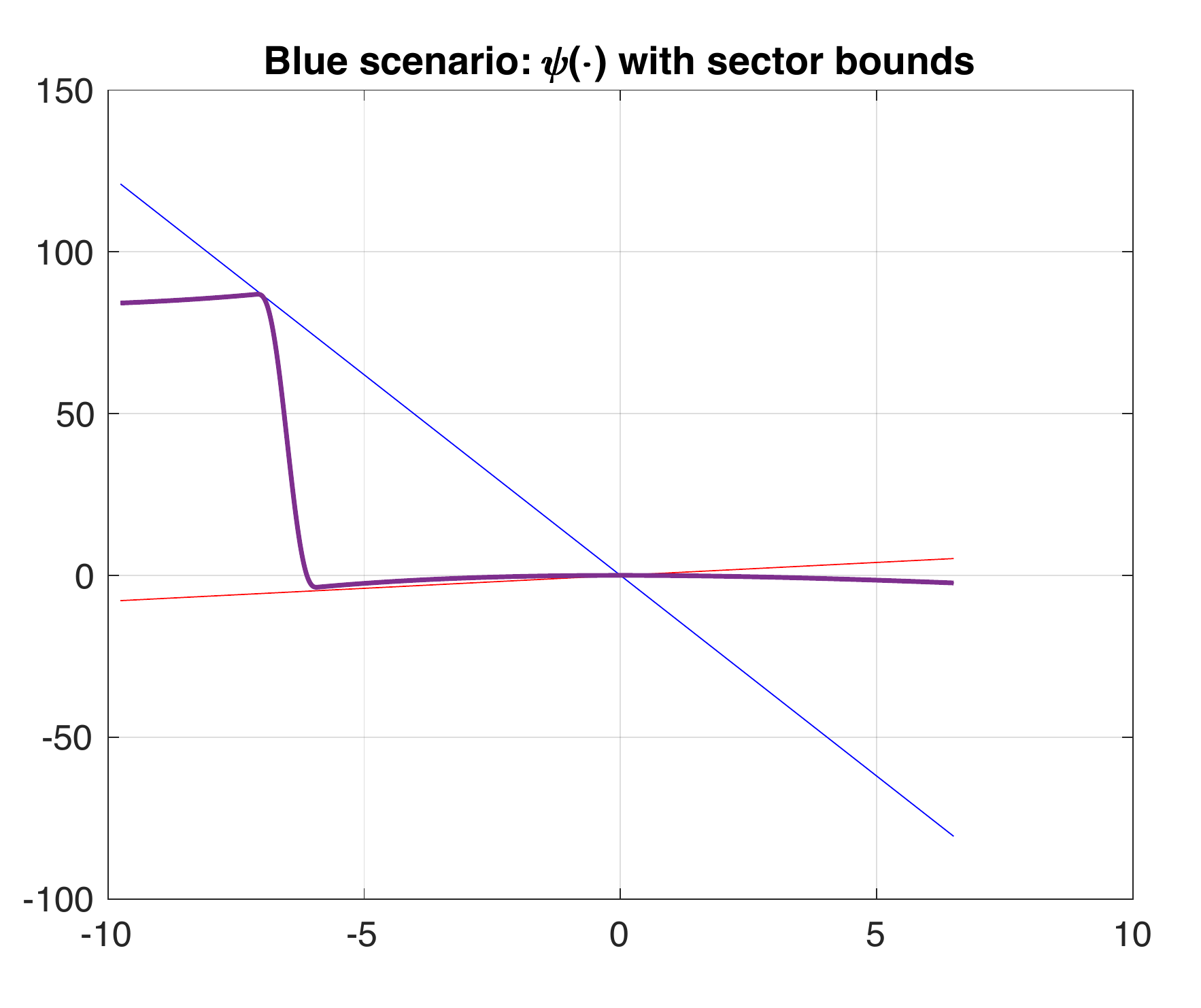}
\end{center}
\caption{Sector non-linearity $q_l \omega \leq \psi(\omega) \leq q_u\omega$ for gray scenario (left) and blue scenario (right). \label{sector}}
\end{figure}
\begin{lemma}
\label{lemma_sector}
For $\omega\to \pm\infty$ the non-linearity $\psi(\omega)$
behaves asymptotically like a line $-q_s \omega + a_\pm$,
where
\[
q_s = \frac{c_b}{\sqrt{GJI}} +q, \;
a_+= \frac{LW_{ob}R_b}{GJ}(\mu_{sb}-\mu_{cb})e^{-\frac{\gamma_b}{\nu_f}\Omega},\; a_-=2\frac{LW_{ob}R_b\mu_{cb}}{GJ}
+ \frac{L}{GJ} \left( \mu_{cb}-\mu_{sb}\right)e^{-\frac{\gamma_b}{\nu_f}\Omega}.
\]
\end{lemma}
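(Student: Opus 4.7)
The statement is an asymptotic identification, so my plan is a direct computation: start from the definition of $\psi$ in (\ref{NL}) together with the explicit form of $\phi = \phi_{\rm mud} + \phi_{\rm rock}$ in (\ref{rock}), and isolate the linear and constant parts that survive when $\omega \to \pm\infty$.

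First I would introduce the auxiliary variable $\theta = \Omega + \frac{1}{L}\sqrt{GJ/I}\,\omega$, so that $\omega \to \pm\infty$ is equivalent to $\theta \to \pm\infty$ (since $\Omega$ is a fixed positive steady-state speed). Writing
\[
\psi(\omega) = \frac{L}{GJ}\bigl(\phi(\Omega) - \phi(\theta)\bigr) - q\,\omega,
\]
the analysis splits into the two sign regimes of $\theta$. On $\theta > 0$ one has $\phi(\theta) = c_b\theta + W_{ob}R_b\mu_{cb} + W_{ob}R_b(\mu_{sb}-\mu_{cb})e^{-(\gamma_b/\nu_f)\theta}$, and on $\theta < 0$ the sign flip produces $\phi(\theta) = c_b\theta - W_{ob}R_b\mu_{cb} - W_{ob}R_b(\mu_{sb}-\mu_{cb})e^{(\gamma_b/\nu_f)\theta}$. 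In both cases the Stribeck exponential term tends to $0$ as $|\theta|\to \infty$, which is what produces the ``asymptotically linear'' behavior; the error is in fact exponentially small, so nothing more delicate than plugging in is needed.

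Next I would collect the $\omega$-linear terms. From $\phi(\theta) \sim c_b\theta$ plus a constant, the contribution to $\psi$ is $-\frac{L}{GJ}\cdot c_b \cdot \frac{1}{L}\sqrt{GJ/I}\,\omega - q\omega = -\bigl(c_b/\sqrt{GJI} + q\bigr)\omega = -q_s\,\omega$, giving the common slope on both sides. For the constants I would substitute $\phi(\Omega)$ (which involves the Stribeck exponential evaluated at the steady-state $\Omega$) and cancel against the $\pm W_{ob}R_b\mu_{cb}$ piece coming from $\phi(\theta)$ at $\pm \infty$: on the $+\infty$ side the $c_b\Omega$ and $W_{ob}R_b\mu_{cb}$ pieces cancel, leaving only $a_+ = \frac{LW_{ob}R_b(\mu_{sb}-\mu_{cb})}{GJ}e^{-(\gamma_b/\nu_f)\Omega}$; on the $-\infty$ side the $W_{ob}R_b\mu_{cb}$ contributions add, producing an extra $2\,\frac{LW_{ob}R_b\mu_{cb}}{GJ}$ on top of the same Stribeck term, which is the claimed $a_-$.

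There is essentially no obstacle beyond bookkeeping and taking care of the sign of $\theta$ when differentiating the $|\theta|$ and $\mathrm{sign}(\theta)$ factors. I expect the only thing to double-check is the statement of $a_-$, where one must be careful that the $-W_{ob}R_b\mu_{cb}$ coming from the $\mathrm{sign}(\theta)$-flip is \emph{subtracted} in $\phi(\Omega)-\phi(\theta)$ and therefore \emph{added} to the $+W_{ob}R_b\mu_{cb}$ piece coming from $\phi(\Omega)$, which is what produces the factor $2$ in $a_-$.
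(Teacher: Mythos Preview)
Your proposal is correct and follows exactly the same direct computation as the paper: substitute the explicit form of $\phi=\phi_{\rm mud}+\phi_{\rm rock}$ into the definition of $\psi$, split according to the sign of $\theta=\Omega+\frac{1}{L}\sqrt{GJ/I}\,\omega$, and let the Stribeck exponential vanish as $|\theta|\to\infty$. The paper's argument is identical in structure, differing only in notation (it writes the argument as $x_t$ and locates the kink at $-x_t^0=-L\Omega\sqrt{I}/\sqrt{GJ}$ rather than at $\theta=0$).
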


\begin{proof}
Note that since we have transferred the steady state to the origin, the kink of the friction term $\psi$ occurs at $x_t=-\frac{L\Omega \sqrt{I}}{\sqrt{GJ}}=:-x_t^0$.
For $x_t \gg -x_t^0$ we have $\psi(x_t)=-(\frac{c_b}{\sqrt{GJI}}+q)x_t + 
\frac{LW_{ob}R_b}{GJ}(\mu_{sb}-\mu_{cb})e^{-\frac{\gamma_b}{\nu_f}\Omega} (1-e^{-\frac{\gamma_b}{\nu_f}\frac{1}{L}\sqrt{\frac{GJ}{I}}x_t}) 
\sim  -(\frac{c_b}{\sqrt{GJI}}+q)x_t + \frac{LW_{ob}R_b}{GJ}(\mu_{sb}-\mu_{cb})e^{-\frac{\gamma_b}{\nu_f}\Omega}= -q_sx_t+a_+
$,
and for $x_t < x_t^0$ we get
$\psi(x_t)=-(\frac{c_b}{\sqrt{GJI}}+q)x_t + 2 \frac{LW_{ob}R_b \mu_{cb}}{GJ}+ \frac{L}{GJ} \left(\mu_{cb}-\mu_{sb}\right)e^{-\frac{\gamma_b}{\nu_f}\Omega}   
+ \frac{L}{GJ} \left(\mu_{cb}-\mu_{sb} ) e^{-\frac{1}{L}\sqrt{\frac{GJ}{I}}|x_t|}e^{\frac{\gamma_b}{\nu_f} \Omega}  \right)\sim -(\frac{c_b}{\sqrt{GJI}}+q)x_t + 2 \frac{LW_{ob}R_b \mu_{cb}}{GJ}+ \frac{L}{GJ} \left(\mu_{cb}-\mu_{sb}\right)e^{-\frac{\gamma_b}{\nu_f}\Omega}
= -q_s+a_-$.
\hfill $\square$
\end{proof}

Since both branches
behave asymptotically like a line with slope 
\begin{equation}
    \label{slope}
    -q_s:=
-\frac{c_b}{\sqrt{GJI}}-q= -\frac{W_{ob}R_b (\gamma_b/\nu_f)(\mu_{sb}-\mu_{cb})e^{-\frac{\gamma_b}{\nu_f}\Omega}}{\sqrt{GJI}},
\end{equation}
it is not hard to find slopes
$q_l,q_u$ with $q_l\omega \leq \psi(\omega)\leq q_u\omega$.
Those can be seen in
Fig. \ref{sector} for the gray and blue cases. 
We use the standard notation $\psi\in {\bf sect}(q_l,q_u)$.

In order to achieve stability of the non-linear
closed loop, we now apply the technique
of Zames \cite{zames:66}, which requires that
the linear system $T_{y_1w}(K)$ 
in feedback with the non-linearity $\psi(\cdot)$
as in Fig. \ref{fig_scheme} right
satisfy the complementary sector constraint.
To put this to work, we let
$c=(q_l+q_u)/2$ and $r=(q_u-q_l)/2$, and introduce
the centered non-linearity
$\chi(w)=\psi(w)-cw$, which satisfies $\chi\in {\bf sect}(-r,r)$.

The centered non-linearity
$\chi(w) = \psi(w) - cw$ is now in
feedback with the following shifted plant:
\begin{align}
\widetilde{P}:\qquad\qquad
\label{plant}
\begin{split}
{x}_{tt}(\xi,t) &= x_{\xi\xi}(\xi,t) -2\lambda x_t(\xi,t) \\
x_\xi(1,t) &= -x_t(1,t) + u(t) \\
\alpha x_{tt}(0,t) &=x_\xi(0,t) +  (q+c) x_t(0,t) + e(t)\\
y_1(t) &=x_t(0,t), y_2(t) = x_t(1,t) , {z}(t) =x_t(0,t) , 
\end{split}
\end{align}
connected with 
\begin{equation}
\label{loop2}
u = Ky, \quad z_\chi = \chi(e_\chi), \quad e = z_\chi + w, \quad e_\chi = {z} + w_\chi.
\end{equation}
Closing the loop with regard to $u=Ky$ leads to ${z}=\widetilde{T}_{ze}(K)e$, which is in loop with the non-linearity
$z_\chi = \chi(e_\chi)$ as in Fig. \ref{zames_figure}. Here and in the following channels derived form plant $\widetilde{P}$
will be denoted $\widetilde{T}_{wz}(K)$ etc.
Note that the sole difference between $P$ and $\widetilde{P}$ is that the parameter $q$ is replaced by $\widetilde{q}=q+c$.
In particular, stabilization of $\widetilde{P}$ is obtained
as studied in section \ref{stabilize}. Ultimately this means that $K$ will have to stabilize the linear wave equation for two different values 
$q,\widetilde{q}$, while $\alpha,\lambda$ remain fixed.

\begin{figure}[!htbp]
\begin{center}
\includegraphics[scale=1.3]{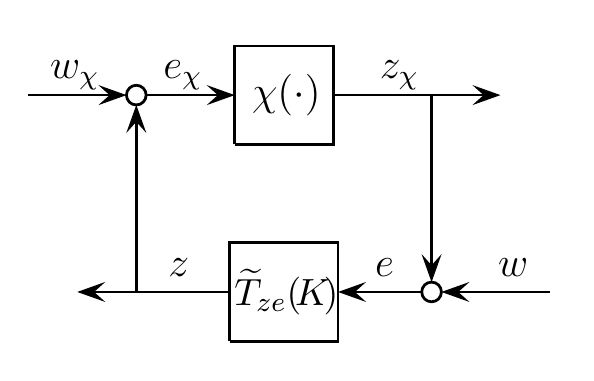}
\end{center}
\caption{Closing the loop with $u=Ky$ in (\ref{plant}) leaves an exponentially stable linear system
$\widetilde{T}_{ze}(K)$ in feedback with the shifted static non-linearity $z_\chi=\chi(e_\chi)$. 
\label{zames_figure}}
\end{figure}

\begin{lemma}
Let $\psi\in {\bf sect}(q_l,q_u)$ and put $c=(q_u+q_l)/2$, $r = (q_u-q_l)/2$. 
Suppose the controller $K$ has been tuned such that the closed loop $(\widetilde{P},K)$
is $H_\infty$-stable with 
$\|\widetilde{T}_{ze}(K)\|_\infty < r^{-1}$. Then the non-linear closed-loop {\rm (\ref{system})} with $u=Ky$ is finite gain input-output stable, i.e.,
there exists a constant $M>0$ such that in {\rm (\ref{plant})-(\ref{loop2})} we have
$\|x_t(0,\cdot)\|_2 + \|\psi(x_t(0,\cdot))\|_2 \leq M \left( \|w_\chi\|_2+ \|w\|_2\right)$ for all inputs $w,w_\chi\in L_2[0,\infty)$.
\end{lemma}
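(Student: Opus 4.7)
The plan is to apply the small-gain/passivity argument of Zames in the form usually associated with a circle-criterion reformulation, adapted to our infinite-dimensional linear plant $\widetilde{T}_{ze}(K)$. Because $K$ has been designed so that the closed loop $(\widetilde{P},K)$ is $H_\infty$-stable, the map $e\mapsto z$ induced by $\widetilde{T}_{ze}(K)$ is a bounded $L_2\to L_2$ operator with induced norm $\gamma := \|\widetilde{T}_{ze}(K)\|_\infty < r^{-1}$. Meanwhile, the sector property $\chi\in \mathbf{sect}(-r,r)$ gives the pointwise inequality $|\chi(\omega)| \leq r|\omega|$ for all $\omega\in\R$, so the memory-less nonlinearity $\chi(\cdot)$ has $L_2$-gain at most $r$. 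The product of these two gains is strictly smaller than $1$, which is the setting where a contraction/small-gain estimate applies.

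First I would write the closed-loop equations from the interconnection (\ref{loop2}), namely
\begin{equation*}
z = \widetilde{T}_{ze}(K)\bigl(\chi(z+w_\chi) + w\bigr),
\end{equation*}
and take $L_2$-norms on both sides. Using the induced-norm bound and the sector bound successively gives
\begin{equation*}
\|z\|_2 \;\leq\; \gamma\bigl(\|\chi(z+w_\chi)\|_2 + \|w\|_2\bigr) \;\leq\; \gamma r\|z\|_2 + \gamma r\|w_\chi\|_2 + \gamma\|w\|_2,
\end{equation*}
so that the small-gain condition $\gamma r < 1$ allows the $\|z\|_2$ term to be absorbed on the left, yielding
\begin{equation*}
\|z\|_2 \;\leq\; \frac{\gamma r}{1-\gamma r}\|w_\chi\|_2 + \frac{\gamma}{1-\gamma r}\|w\|_2.
\end{equation*}
Since $z = x_t(0,\cdot)$, this is exactly the first half of the desired inequality. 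For the second half I write $\psi(\omega)=\chi(\omega)+c\omega$, use $\|\chi(z)\|_2\leq r\|z\|_2$, and combine with the bound just obtained to produce a constant $M$ controlling $\|\psi(x_t(0,\cdot))\|_2$ in terms of $\|w\|_2$ and $\|w_\chi\|_2$.

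The main obstacle is making the small-gain chain of inequalities rigorous in the infinite-dimensional setting, where one must first guarantee that the feedback loop is well-posed so that $z,e,z_\chi,e_\chi$ actually exist in suitable extended $L_2$ spaces and the above estimate is not a statement about a non-existent signal. I would handle this by the standard truncation argument in extended $L_{2,e}[0,\infty)$: restrict all signals to an interval $[0,T]$ via the projection $P_T$, obtain the very same inequality with $\|\cdot\|_2$ replaced by $\|P_T\,\cdot\|_2$ (using causality of $\widetilde{T}_{ze}(K)$, guaranteed by the state-space realization constructed in the proof of Theorem~\ref{theorem2}, and the memory-less — hence causal — nature of $\chi$), deduce a uniform-in-$T$ bound on $\|P_T z\|_2$, and let $T\to\infty$ by monotone convergence. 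The existence of solutions to the non-linear closed loop for $L_2$ inputs follows from local existence together with this a-priori bound, so finite-gain $L_2$-stability holds globally with $M=\max\{\gamma r,\gamma,1\}/(1-\gamma r)$ times a constant depending on $|c|$ and $r$.
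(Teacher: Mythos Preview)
Your argument is correct and follows essentially the same route as the paper: center the nonlinearity to $\chi\in\mathbf{sect}(-r,r)$, invoke the small-gain theorem for the feedback of $\widetilde{T}_{ze}(K)$ (gain $<r^{-1}$) against $\chi$ (gain $\leq r$), and recover the bound on $\psi(x_t(0,\cdot))$ via $\psi=\chi+cI$. Your version is in fact more explicit than the paper's, which simply cites Zames' theorem; in particular your truncation argument in $L_{2,e}$ for well-posedness and your direct identification $z=x_t(0,\cdot)$ from the definition of $\widetilde{P}$ are cleaner than the paper's slightly loose identification of $e_\chi$ with $x_t(0,\cdot)$.
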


\begin{proof}
This follows from \cite[Thm. 1]{zames:66}. If $\psi \in {\bf sect}(q_l,q_u)$, then the centered non-linearity
$\chi:=\psi-cI$ satisfies $\chi\in {\bf sect}(-r,r)$, hence $\|\chi(z_\chi)\|_2 \leq r \|z_\chi\|_2$ has $L_2$-gain $r$ in the sense of
\cite[Def. (3)]{zames:66}. 
This non-linearity is now in feedback with $\widetilde{T}_{ze}(K)$.
Since by assumption $K$ has been tuned such that $\|\widetilde{T}_{ze}(K)\|_\infty < r^{-1}$, 
this LTI-system has $L_2$-gain $<r^{-1}$, and
the small gain
theorem implies boundedness of the loop (\ref{plant})-(\ref{loop2}), i.e., there exists a constant $M > 0$ 
such that $\|e_\chi\|_2 \leq M (\|w\|_2+ \|w_\chi\|_2)$ and $\|e\|_2 \leq M(\|w\|_2+\|w_\chi\|_2)$ in Fig. \ref{zames_figure}. 
Since in closed loop the input $e$ to $\widetilde{T}_{ze}(K)$ represents the non-linear term $\chi(x_t)$, we derive
$\|\chi(x_t(0,\cdot))\|_2 \leq M\left(\|w_\chi\|_2+\|w\|_2\right)$.
Still from the small gain theorem we get $\|e_\chi\|_2 \leq M\left( \|w_\chi\|_2+\|w\|_2\right)$, and since in closed loop $e_\chi$ represents the output $x_t(0,\cdot)$, we have
$\|x_t(0,\cdot)\|_2 \leq M\left(\|w_\chi\|_2+\|w\|_2\right)$ in closed loop. Finally, for $\psi = \chi + cI$ we get a similar estimate by combining the previous two:
$\|\psi(x_t(0,\cdot))\|_2 \leq \|\chi(x_t(0,\cdot))\|_2 + c\|x_t(0,\cdot)\|_2 \leq M(1+c)\left(\|w_\chi\|_2+\|w\|_2\right)$.
\hfill $\square$
\end{proof}

This has now the following consequence:

\begin{proposition}
Let $\psi\in {\bf sect}(q_l,q_u)$ with $c,r$ as above, and
suppose the controller $K$ has been tuned such that the closed loop 
$(\widetilde{P},K)$ is $H_\infty$-stable with
$\|\widetilde{T}_{ze}(K)\|_\infty < r^{-1}$. Then the non-linear closed loop between {\rm (\ref{system})}
and $u = Ky$ is input-to-state stable in the following sense: If the input signal $w\in L_2[0,\infty)$, then the
state $(x,x_t)$ of the non-linear closed loop with initial condition $x_{cl}(0)=x_0$ is in $L_2([0,\infty),H)$. 
\end{proposition}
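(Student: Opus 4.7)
The plan is to interpret the non-linear closed loop as the exponentially stable linear closed loop $(\widetilde{P},K)$ driven by an exogenous $L_2$-signal produced by the centered non-linearity $\chi$, and then to invoke $L_2$-admissibility of the boundary input together with exponential stability of the closed-loop generator so as to upgrade the boundary-trace bound of the preceding Lemma to an $L_2$-bound on the full state.

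First I would apply the preceding Lemma with $w_\chi=0$, which yields $x_t(0,\cdot)\in L_2[0,\infty)$ and hence $\chi(x_t(0,\cdot))=\psi(x_t(0,\cdot))-c\,x_t(0,\cdot)\in L_2[0,\infty)$. From the interconnection (\ref{plant})--(\ref{loop2}) the non-linear loop can then be re-read as the linear plant $\widetilde{P}$ in feedback with $u=Ky$, driven by the exogenous boundary input
\[
e(t)=\chi(x_t(0,t))+w(t)\in L_2[0,\infty)
\]
entering the boundary condition at $\xi=0$.

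Next, Theorem \ref{theorem2} guarantees that the linear closed loop $(\widetilde{P},K)$ is exponentially stable in the state-space sense, i.e. the closed-loop generator $A_{cl}$ generates an exponentially stable $C_0$-semigroup on the closed-loop Hilbert space. Combined with the $L_2$-admissibility of the boundary input-to-state map --- established in open-loop form in the Corollary following Theorem \ref{theorem1} and transferred to the closed loop via the shift and augmentation used in the proof of Theorem \ref{theorem2}, exploiting that $K$ is finite dimensional --- this gives the standard well-posed input-to-state estimate
\[
\|x_{cl}\|_{L_2([0,\infty),H)}\leq C\bigl(\|x_0\|_H+\|e\|_{L_2[0,\infty)}\bigr),
\]
see e.g.~\cite[Thm.~5.2]{Morris:1999} or \cite[Ch.~8]{Staffans_book}. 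Combining with the $L_2$-bound on $e$ from the first step yields $(x,x_t)\in L_2([0,\infty),H)$, as required.

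The main obstacle I foresee is making rigorous the $L_2$-admissibility of the closed-loop boundary input-to-state map, rather than just the open-loop one handled by the Corollary after Theorem \ref{theorem1}. This should follow from a careful re-use of the well-posed boundary control formulation constructed in the proof of Theorem \ref{theorem2}, together with the observation that feedback with the strictly proper, finite-dimensional $K$ (after the direct transmission $K_0$ has been absorbed into the plant) perturbs admissibility only through bounded operators while exponentially stabilizing the semigroup.
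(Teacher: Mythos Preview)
Your overall architecture is the same as the paper's: view the non-linear loop as the exponentially stable linear loop $(\widetilde{P},K)$ forced by $e=\chi(x_t(0,\cdot))+w$, use the preceding Lemma to put $e\in L_2$, then invoke exponential stability of $A_{cl}$ to get the state in $L_2([0,\infty),H)$. The final step and the admissibility discussion are essentially what the paper does, citing \cite[Ch.~VI,7.1a]{engel_nagel}.

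There is, however, a genuine gap in your first step. Setting $w_\chi=0$ in the Lemma gives you the $L_2$-bound on the boundary trace of the \emph{zero-initial-condition} feedback loop in Fig.~\ref{zames_figure}, not of the actual non-linear closed loop with $x_{cl}(0)=x_0$. The Lemma is a pure input--output small-gain statement; the linear block $\widetilde{T}_{ze}(K)$ in that diagram has zero initial state. With $x_0\neq 0$, the true boundary trace is
\[
x_t(0,t)=C_{cl}e^{A_{cl}t}x_0+\bigl(\widetilde{T}_{ze}(K)*e\bigr)(t),
\]
and the free-evolution term $C_{cl}e^{A_{cl}t}x_0$ is precisely what your choice $w_\chi=0$ drops. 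As written, the $e$ you feed into the second step is not the signal produced by the non-linear system you want to bound.

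The paper's fix is short and worth noting: choose $w_\chi(t)=C_{cl}e^{A_{cl}t}x_0$, which lies in $L_2[0,\infty)$ because $A_{cl}$ is exponentially stable. Then $e_\chi=z+w_\chi$ coincides with the output $C_{cl}x_{cl}$ of the Cauchy problem $\dot{x}_{cl}=A_{cl}x_{cl}+e$, $x_{cl}(0)=x_0$, so the Lemma now bounds the \emph{correct} $x_t(0,\cdot)$ and $\psi(x_t(0,\cdot))$ by $M(\|w\|_2+\|w_\chi\|_2)\leq M'(\|x_0\|_H+\|w\|_2)$. After that, your remaining argument (set $v=\Psi(x_{cl})+w\in L_2$ and solve the inhomogeneous Cauchy problem) goes through exactly as you outlined.
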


\begin{proof}
Write the non-linear closed loop in the abstract state-space $H$ in theorem \ref{theorem2}
as $\dot{x}_{cl} = A_{cl} x_{cl} + \Psi(x_{cl}) + w$, $x_{cl}(0)=x_0$, where $A_{cl}$ is exponentially stable,
$\Psi(x_{cl}) = \psi(x_t(0,\cdot))$ for the closed-loop $x_t(0,t)$, and where $w(t)$ is an input to the equation 
$\alpha x_{tt}(0,t) =x_\xi(0,t) + qx_t(0,t)+\psi(x_t(0,t))$. 
The linear feedback system $(\widetilde{P},K)$, respectively its channel $\widetilde{T}_{ze}(K)$, 
is now $\dot{x}_{cl}=A_{cl}x_{cl} + e$, ${z}=C_{cl}x_{cl}$,
in loop with the centered non-linearity $\chi(\cdot)$, and $w$ is the lower right input in Fig. \ref{zames_figure}.
To account for a non-zero initial condition
$x_{cl}(0)=x_0$ we choose the top left input in Fig. \ref{zames_figure} as
$w_\chi = C_{cl} e^{A_{cl}t}x_0$, where $C_{cl}$ is the output operator of closed loop system $(\widetilde{P},K)$. Then $e_\chi$ is the solution of the
Cauchy problem $\dot{x}_{cl}=A_{cl}x_{cl}+e, x_{cl}(0)=x_0$.
From the lemma we get $\|\Psi(x_{cl})\|_2 = \|\psi(x_t)\|_2 \leq M\left( \|w_\chi\|_2+\|w\|_2\right) \leq M' \left(\|x_{cl}(0)e^{-\omega_0t}\|_2 +\|w\|_2  \right)$,
where $-\omega_0 < 0$ is the growth rate of the exponentially stable generator $A_{cl}$.
In particular, if we put $v(t) = \Psi(x_{cl}(t))+w(t)$, then $\|v\|_2 \leq (M'+1) \left(|x_{cl}(0)|+  \|w\|_2\right)$, hence we may consider
$v(t)$ as a right hand side in $L_2$ to the
non-homogeneous Cauchy problem $\dot{x}_{cl}=A_{cl}x_{cl} + v$, $x_{cl}(0)=x_0$. Since $A_{cl}$ is exponentially stable, the closed loop state
is then also in $L_2$; \cite[Ch. VI,7.1a]{engel_nagel}.
\hfill $\square$
\end{proof}

One wonders whether the state $x_{cl}(t)$ decays exponentially to 0 when this is the case for the input $w(t)$. 
Suppose $w \in L_2$ decays exponentially in the sense that $w=e^{-at} \widetilde{w}$ for some $a > 0$ and $\widetilde{w}\in L_2$. 
In this case it seems plausible to work with the weighted $L_2$-norm
$\interleave w\interleave_2^2= \|e^{at} w(\cdot)\|_2^2=\|\widetilde{w}\|_2^2$.

\begin{proposition}
\label{exponentially}
Suppose $\psi\in {\bf sect}(q_l,q_u)$ with $c,r$ as above, and suppose $K$ has been tuned such that
$(\widetilde{P},K)$ is $H_\infty$-stable, with $\|\widetilde{T}_{ze}(K)\|_\infty < r^{-1}$. There exists $a > 0$ such that
whenever the input
$w$ decays exponentially with rate at least as fast as $a$, i.e., $w(t) = e^{-at}\widetilde{w}(t)$ for some $\widetilde{w}\in L_2[0,\infty)$,
then the state $x_{cl}(t)$ of the non-linear closed loop in response to the input $w$ decays exponentially with rate
at least  $a$.
\end{proposition}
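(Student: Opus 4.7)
The natural approach is a weighted-norm/exponential-shift argument: introduce $\widetilde{x}_{cl}(t) = e^{at} x_{cl}(t)$ and $\widetilde{w}(t) = e^{at} w(t)$, and verify that the shifted system still satisfies the hypotheses of the previous proposition. My plan is to reduce the statement about exponential decay of $x_{cl}$ to $L_2$-boundedness of $\widetilde{x}_{cl}$, then apply the small-gain/sector argument to the exponentially weighted signals.

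First, I would write the non-linear closed loop as $\dot{x}_{cl} = A_{cl} x_{cl} + \Psi(x_{cl}) + w$ as in the previous proposition, and perform the change of variable $\widetilde{x}_{cl}(t) = e^{at}x_{cl}(t)$. This yields
\begin{equation*}
\dot{\widetilde{x}}_{cl} = (A_{cl}+aI)\widetilde{x}_{cl} + \widetilde{\Psi}_a(t,\widetilde{x}_{cl}) + \widetilde{w},
\end{equation*}
where $\widetilde{\Psi}_a(t,\xi) := e^{at}\Psi(e^{-at}\xi)$. After the centering $\Psi = \chi + cI$ the shifted non-linearity is $\widetilde{\chi}_a(t,\xi) = e^{at}\chi(e^{-at}\xi)$, which inherits the sector property: since $\chi \in \mathbf{sect}(-r,r)$ gives $|\chi(\omega)|\le r|\omega|$, we have $|\widetilde{\chi}_a(t,\xi)|\le r|\xi|$ for every $t$. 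So the centered time-varying non-linearity still has $L_2$-gain $\le r$ in the sense of Zames.

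Next, I would handle the linear channel. Since $A_{cl}$ is exponentially stable with some rate $-\omega_0<0$ (proved in Theorem \ref{theorem2}), the shifted generator $A_{cl}+aI$ is exponentially stable for every $0<a<\omega_0$, and the corresponding transfer function of the shifted linear closed loop is $s\mapsto \widetilde{T}_{ze}(K)(s-a)$. Under the hypothesis $\|\widetilde{T}_{ze}(K)\|_\infty < r^{-1}$, the function $\widetilde{T}_{ze}(K)$ is analytic on a strip $\mathrm{Re}(s)>-\omega_0$ and its norm over the half-plane $\mathrm{Re}(s)\ge -a$ is a continuous non-decreasing function of $a\in [0,\omega_0)$ taking the value $\|\widetilde{T}_{ze}(K)\|_\infty<r^{-1}$ at $a=0$. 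Hence there exists $a_0>0$ with $\|\widetilde{T}_{ze}(K)(\,\cdot\,-a)\|_\infty < r^{-1}$ for all $a\in [0,a_0]$. Fix such an $a<\min\{a_0,\omega_0\}$.

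With these two ingredients, the shifted system is exactly in the setting of the previous lemma and proposition: an exponentially stable linear plant $\widetilde{T}_{ze}(K)(\,\cdot\,-a)$ with $H_\infty$-norm $<r^{-1}$ in feedback with a (now time-varying, but uniformly in $t$) sector non-linearity of gain $\le r$, driven by $\widetilde{w}\in L_2$ and by the exponentially decaying effect $e^{at}C_{cl}e^{A_{cl}t}x_0$ of the initial condition (which is still in $L_2$ because $a<\omega_0$). The small-gain theorem applied to these weighted signals yields $\widetilde{x}_{cl}\in L_2([0,\infty),H)$ with $\|\widetilde{x}_{cl}\|_2 \le M(|x_0|+\|\widetilde{w}\|_2)$, and unwinding the change of variable gives $\|e^{at}x_{cl}(t)\|_2<\infty$, i.e.\ exponential decay of $x_{cl}$ with rate at least $a$. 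The main obstacle I expect is the continuity/analyticity statement on the shifted $H_\infty$-norm, which must be justified from the exponential stability of $A_{cl}$ together with the boundedness of $\widetilde{T}_{ze}(K)$ on the closed right half plane; once this is established, the rest is a direct rerun of the proof of the preceding proposition with $a=0$ replaced by $a>0$.
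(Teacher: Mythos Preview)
Your proposal is correct and follows essentially the same exponential-shift/small-gain strategy as the paper. The paper phrases the shift via the weighted norm $\interleave w\interleave_2:=\|e^{a\cdot}w\|_2$ on the original signals (so the sector non-linearity $\chi$ stays time-invariant and one simply checks $\interleave\chi(w)\interleave_2\le r\interleave w\interleave_2$), while you perform the explicit change of variable $\widetilde{x}_{cl}=e^{at}x_{cl}$, which produces the time-varying $\widetilde{\chi}_a$; the two viewpoints are equivalent. One point worth sharpening: the small-gain argument bounds the loop signals $e,e_\chi$ (hence $\Psi(x_{cl})$), not the state directly, and the paper then feeds $v=\Psi(x_{cl})+w=e^{-at}\widetilde{v}$ into the variation-of-constants formula for $\dot{x}_{cl}=A_{cl}x_{cl}+v$ to obtain the \emph{pointwise} bound $|x_{cl}(t)|\le C e^{-at}$, which is slightly stronger than the $L_2$ conclusion $e^{a\cdot}x_{cl}\in L_2$ you state.
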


\begin{proof}
1)
Since the closed loop $(\widetilde{P},K)$ is exponentially stable with 
$-\omega_0 := \omega_0(A_{cl}) < 0$ and
$\|\widetilde{T}_{ze}(K)\|_\infty < r^{-1}$, we may
choose a small enough shift $0 < a < \omega_0$
such that $(\widetilde{P}(\cdot-a),K(\cdot-a))$ is still exponentially stable and
$\|\widetilde{T}_{ze}(K)(\cdot -a)\|_\infty < r^{-1}$. 
Let $\interleave \cdot \interleave_2$ be the corresponding weighted $L_2$-norm as above.

2)
Let us observe that for the centered non-linearity
$\chi\in {\bf sect}(-r,r)$ implies $\interleave \chi(w)\interleave_2 \leq r \interleave w\interleave_2$
for all $w = e^{at} \widetilde{w}$. Namely,
$\interleave\chi(w)\interleave_2^2=\int_0^t  e^{2a\tau} |\chi(w(\tau))|^2 d\tau \leq \int_0^t e^{2a\tau} r^2 |w(\tau)|^2d\tau
= r^2 \interleave w\interleave_2^2$.

3)
Now we establish the complementary estimate for the LTI 
feedback system $(\widetilde{P},K)$ and its channel $\widetilde{T}_{ze}(K)$  
with regard to the norm
$\interleave \cdot \interleave_2$. 
We have
\begin{align*}
    \interleave \widetilde{T}_{ze}(K) \ast w\interleave _2^2 &= \int_0^\infty e^{2at} \left| \int_0^t \widetilde{T}_{ze}(K)(t-\tau) w(\tau) d\tau\right|^2 dt\\
    &= \int_0^\infty  \left| \int_0^t \widetilde{T}_{ze}(K)(t-\tau) e^{a(t-\tau)} w(\tau)e^{a\tau} d\tau\right|^2 dt\\
    &= \int_0^\infty \int_0^t \left| \left(\widetilde{T}_{ze}(K)\cdot e^{at}\right)(t-\tau) \left( {w}\cdot e^{at}\right)(\tau)d\tau \right|^2 dt\\
    &= \| (\widetilde{T}_{ze}(K)\cdot e^{at}) \ast \widetilde{w} \|_2^2 = \| \widetilde{T}_{ze}(K)(s-a)\cdot \widetilde{w}(s)\|_2^2\\
    &\leq \|\widetilde{T}_{ze}(K)(\cdot-a)\|_\infty^2 \|\widetilde{w}\|_2^2 = \|\widetilde{T}_{ze}(K)(\cdot-a)\|_\infty^2 \interleave w\interleave_2^2\\
    &< r^{-2} \interleave w\interleave_2^2.
\end{align*}
This means we may apply the small gain argument with the norm $\interleave\cdot\interleave_2$. The result is as before
that $\interleave x_t(0,\cdot)\interleave_2 + \interleave \psi(x_t(0,\cdot))\interleave_2 \leq M
\left(\interleave w_\chi\interleave_2+\interleave w \interleave_2\right)$
for some $M>0$ and all inputs $w=e^{-at}\widetilde{w}$,
$w_\chi = e^{-at} \widetilde{w}_\chi$
with $\widetilde{w},\widetilde{w}_\chi\in L_2[0,\infty)$. That means the non-linearity in closed loop
in response to the signal $w=e^{-at}\widetilde{w}$ also decays at least as fast as $e^{-at}$, so that the right hand side
$v(t) = \Psi(x_{cl}(t)) + w(t)$ already used in the previous proposition is of the form $v(t)=e^{-at}\widetilde{v}(t)$ for some $\widetilde{v}\in L_2$. 

We also have to argue that $w_\chi =C_{cl}e^{A_{cl}t}x_0$ decays with rate $a$, which
holds since $a < -\omega_0(A)$.
But now all we have to observe is that due to exponential stability of $A_{cl}$  in the non-homogeneous Cauchy problem
$\dot{x}_{cl} = A_{cl} x_{cl} +v$ the state decays exponentially as soon as $v$ decays exponentially. The mild solution
in the semi-group sense 
\cite[p. 436]{engel_nagel} satisfies $x_{cl}(t)= e^{A_{cl}t}x_{cl}(0) + \int_0^t e^{A_{cl}(t-\tau)} v(\tau) d\tau$, 
hence $|x_{cl}(t)|\leq M\left(e^{-\omega_0t}+ \|\widetilde{v}\|_2 \int_0^t e^{-\omega_0 (t-\tau)}e^{-a\tau} d\tau \right)\leq M(1+\|\widetilde{v}\|_2/(\omega_0-a)) e^{-at}$.
\hfill $\square$
\end{proof}

This brings us now to our first
optimization program, where we combine a mixed $H_\infty$
performance and robustness
requirement (Fig. \ref{fig_scheme} right)  for the nominal plant with a sector constraint assuring global exponential stability
of the non-linear closed loop (Fig. \ref{fig_scheme} left) when satisfied:

\begin{eqnarray}
\label{program1}&
\begin{array}{ll}
\mbox{minimize} & r\|\widetilde{T}_{{z}e}(K)\|_\infty \\  
\mbox{subject to} & \|W_u T_{uw}(K)\|_\infty \leq 1 \\
&K \in \mathscr K
\end{array}
\end{eqnarray}
Here $\mathscr K$ refers to a class of structured
controllers, and optimization over $K\in \mathscr K$ can be dispensed with as soon as the objective attains a value $<1$. 
As our experiments show,
the sectorial approach works successfully for the gray scenario.
Note that it is implicit in (\ref{program1})
that $K$ has to stabilize $P$ and $\widetilde{P}$,
which means stabilizing the wave equation
for the two different values $q$ and $\widetilde{q}=q+c$ with the same $\alpha,\lambda$.

\subsection{Large magnitude sector constraint}
The
limitation of the sector approach is 
obviously 
that if the primal sector ${\bf sect}(q_l,q_u)$ is large, 
it is difficult to tune $K$ such that the closed
loop system $({P},K)$ is in the complementary sector.  In the transformed metric, if the primal sector is large, then 
$r$ is large, so $r^{-1}$ is small and the constraint $\|\widetilde{T}_{ze}(K)\|_\infty < r^{-1}$ 
in (\ref{program1})
is difficult to achieve -- if at all.
This fails indeed for the blue scenario,
and Zames-Falb multipliers \cite{zames_falb} do not help for the specific non-linearity 
$\psi$.
However, the particular structure of the non-linearity
in Lemma \ref{lemma_sector}
suggests the following definition
as a remedy.

We say that $\psi$ satisfies a {\it large magnitude
sector}  constraint, denoted
$\psi \sim {\bf sect}(q_l,q_u)$, if there exist constants $L,M > 0$ such that
$\left(\psi(x)-q_lx\right)\cdot\left( \psi(x) - q_ux\right) \geq 0$ for all $|x| > M$, while
$|\psi(x)| \leq L|x|$ for $|x|\leq M$. A {\it strict}  large magnitude sector is defined analogously. This is indeed what happens
for $\psi(\cdot)$ here, because from Lemma \ref{lemma_sector}
it follows that
any choice $q_l < -q_s < q_u$ will give such a
large magnitude sector.

\begin{proposition}
\label{asymp}
Suppose $\psi$ satisfies a large magnitude sector constraint
$\psi\sim {\bf sect}(q_l,q_u)$ with constants $M,L$. Let $c=(q_u+q_l)/2$, $r = (q_u-q_r)/2$, and suppose
the controller $K$ has been tuned such that the loop
$(\widetilde{P},K)$ is $H_\infty$-stable and satisfies
$\|\widetilde{T}_{ze}(K)\|_{{\rm pk}\_{\rm gn}} < r^{-1}$ for the peak-to-peak norm. Then for every input $w\in L_\infty[0,\infty)$ the non-linear 
closed loop state trajectory
$x_{cl}(t)$ is in $L_\infty([0,\infty),H)$.
\end{proposition}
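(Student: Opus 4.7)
The plan is to adapt the $L_2$-small-gain argument from Proposition~\ref{exponentially} to the $L_\infty$ (peak) setting, where the induced gain on the LTI channel $\widetilde{T}_{ze}(K)$ is exactly the peak-to-peak norm $\|\cdot\|_{{\rm pk}\_{\rm gn}}$. The twist is that a large-magnitude sector does not give a purely linear $L_\infty$-estimate on the centered non-linearity $\chi = \psi - cI$, only an affine one; this introduces an additive constant into the small-gain fixed-point inequality, which the strict inequality will still absorb.

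First I extract the pointwise estimate on $\chi$. For $|x| > M$ the sector condition localizes $\psi(x)$ in the strip between $q_l x$ and $q_u x$, hence $|\chi(x)| = |\psi(x) - cx| \leq r|x|$. For $|x| \leq M$ the hypothesis $|\psi(x)| \leq L|x| \leq LM$ together with $|cx| \leq |c|M$ yields $|\chi(x)| \leq (L+|c|)M =: C_0$. Combining the two regimes, $|\chi(x)| \leq r|x| + C_0$ for every $x \in \mathbb{R}$, which at the signal level reads $\|\chi(e_\chi)\|_\infty \leq r\|e_\chi\|_\infty + C_0$.

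Next I close the loop in the peak norm. The feedback equations of Fig.~\ref{zames_figure} are $z = \widetilde{T}_{ze}(K)\ast e$, $e = \chi(e_\chi) + w$, $e_\chi = z + w_\chi$. Setting $\gamma := \|\widetilde{T}_{ze}(K)\|_{{\rm pk}\_{\rm gn}}$ and using that for LTI systems this is the induced $L_\infty \to L_\infty$ gain, I chain $\|z\|_\infty \leq \gamma(r\|e_\chi\|_\infty + C_0 + \|w\|_\infty)$ with $\|e_\chi\|_\infty \leq \|z\|_\infty + \|w_\chi\|_\infty$. Since $\gamma r < 1$ by hypothesis, the resulting inequality can be solved for $\|e_\chi\|_\infty$ to give
\begin{equation*}
\|e_\chi\|_\infty \leq (1-\gamma r)^{-1}\bigl(\gamma C_0 + \gamma\|w\|_\infty + \|w_\chi\|_\infty\bigr).
\end{equation*}
Because $e_\chi$ represents $x_t(0,\cdot)$ in the non-linear closed loop, this gives $\|x_t(0,\cdot)\|_\infty < \infty$ for any $w, w_\chi \in L_\infty$; as in Proposition~\ref{exponentially} the auxiliary signal $w_\chi = C_{cl}e^{A_{cl}t}x_0$ encodes the initial condition and is itself in $L_\infty$ by exponential stability of $A_{cl}$.

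Finally I lift boundedness of the trace $x_t(0,\cdot)$ to boundedness of the full state. Writing the non-linear closed loop as $\dot{x}_{cl} = A_{cl}x_{cl} + \Psi(x_{cl}) + w$ with $\Psi(x_{cl}) = \psi(x_t(0,\cdot))$ and using $|\psi(x)| \leq |\chi(x)| + |c||x|$, the previous step shows $\Psi(x_{cl}) \in L_\infty$, so $v(t) := \Psi(x_{cl}(t)) + w(t) \in L_\infty([0,\infty),H)$. Variation of constants $x_{cl}(t) = e^{A_{cl}t}x_{cl}(0) + \int_0^t e^{A_{cl}(t-\tau)}v(\tau)\,d\tau$ combined with $\|e^{A_{cl}t}\| \leq Me^{-\omega_0 t}$ then delivers $\|x_{cl}(t)\|_H \leq M\|x_{cl}(0)\|_H + (M/\omega_0)\|v\|_\infty$, proving $x_{cl} \in L_\infty([0,\infty),H)$. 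The main obstacle is the presence of the affine offset $C_0$, which would defeat a pure gain-only small-gain argument; it is handled here because $C_0$ sits in $L_\infty$ as a constant signal and only forces the loop to close in affine rather than linear form, which still works provided $\gamma r < 1$, exactly the strict inequality supplied by the hypothesis.
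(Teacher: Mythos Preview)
Your proof follows the same route as the paper's: derive the affine $L_\infty$-bound $\|\chi(\cdot)\|_\infty \leq r\|\cdot\|_\infty + C_0$ on the centered non-linearity, close the loop with the peak-to-peak gain $\gamma < r^{-1}$ of the LTI block, and then lift to the full state via variation of constants against the exponentially stable $A_{cl}$. The only substantive difference is that the paper invokes a finite-gain small-gain result from Mareels--Hill rather than solving the affine fixed-point inequality by hand.

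One small point to tighten: as written, your chain $\|e_\chi\|_\infty \leq \|z\|_\infty + \|w_\chi\|_\infty \leq \gamma r\|e_\chi\|_\infty + \dots$ presupposes $\|e_\chi\|_\infty < \infty$, which is part of what has to be shown (and also that the closed-loop signal exists for all time). The clean fix---which the paper alludes to by noting the bounds hold ``both fully and in the truncated version''---is to run the same inequalities on $[0,T]$, using causality of $\widetilde{T}_{ze}(K)$ and memorylessness of $\chi$, to obtain a bound on $\|e_\chi\|_{L_\infty[0,T]}$ independent of $T$, and then let $T\to\infty$.
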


\begin{proof}
1)
As before let $\chi = \psi - cI$ be centered, 
then $|\chi(x)| \leq r|x|$ for all $|x| > M$, while 
$|\chi(x)|\leq (L+c)|x|$ for $|x|\leq M$. 
We show that this implies
$|\chi(w)|_\infty \leq r|w|_\infty + k$ for some constant $k>0$ and all $w\in L_\infty[0,\infty)$ in the time domain.
Indeed,
\begin{align*}
\sup_{t > 0} |\chi(w(t))| &\leq \sup_{|w(t)|>M} |\chi(w(t))| + \sup_{|w(t)|\leq M} |\chi(w(t))|\\
&\leq \sup_{|w(t)|>M} r|w(t)| + \sup_{|w(t)|\leq M} (L+c)|w(t)|\\
&\leq r |w|_\infty + (L+c)M =: r |w|_\infty + k.
\end{align*}
Note that the same also holds in the truncated version, i.e.,
$|\chi(w)\cdot{\bf 1}_{[0,t]}|_\infty \leq r|w\cdot {\bf 1}_{[0,t]}|_\infty + k$ for every $t > 0$ and all $w$.

2)
Note that $\|\widetilde{T}_{ze}(K)\|_{{\rm pk}\_{\rm gn}} < r^{-1}$  means $| \widetilde{T}_{ze}(K)\ast w|_\infty \leq (r^{-1}-\delta)|w|_\infty$ for some small $\delta > 0$
with respect to the time-domain
space $L_\infty[0,\infty)$, and similarly in the truncated version.

3) But now both $\widetilde{T}_{ze}(K)$ and the non-linearity $\chi(\cdot)$ are finite-gain stable in the 
sense e.g. of \cite[Def. 3]{mareels_hill:92} with regard to $|\cdot|_\infty$. Namely
$|\chi(w)|_\infty \leq r|w|_\infty + k$ and $|\widetilde{T}_{ze}(K)\ast w|_\infty \leq (r^{-1}-\delta)|w|_\infty$, both fully and in the truncated version.
Since $r\cdot (r^{-1} -\delta)< 1$, it follows from \cite[Cor. 1]{mareels_hill:92} that the closed loop
of Fig. \ref{zames_figure} is finite-gain stable in the sense that
$|z|_\infty \leq M (|w|_\infty + |w_\psi|_\infty) + k$ and $|z_\psi|_\infty \leq M (|w|_\infty + |w_\psi|_\infty) + k$ 
for certain $M,k>0$.  We derive as before
that $|x_t(0,\cdot)|_\infty \leq M(|w|_\infty + |w_\chi|_\infty) + k$
and $|\psi(x_t(0,\cdot))|_\infty \leq M(|w|_\infty+|w_\chi|_\infty) + k$ for all $w\in L_\infty$, where $x_t(0,\cdot)$ is with regard to the closed loop. 

4) Putting $\Psi(x_{cl}(t)) = \psi(x_t(0,t))$ and $v(t) = \Psi(x_{cl}(t))+w(t)$ as before, we can consider $v(t)$ as a right hand side in
the non-homogeneous Cauchy problem $\dot{x}_{cl} = A_{cl}x_{cl} + v$. 
Accounting for non-zero initial data needs
$w_\chi(t) = C_{cl}e^{A_{cl}t}x_0$.
Since $w\in L_\infty$,
we have $|v\cdot {\bf 1}_{[0,t]}|_\infty \leq (M+1)(|w|_\infty +
|w_\chi|_\infty)+k=:k'$ for all $t$, and since $v$ is square integrable up to time $t$,
i.e., $v\cdot{\bf 1}_{[0,t]}\in L_2[0,t]$, 
the solution $x_{cl}$ exists on $[0,t]$ and is bounded independently of $t$ by a constant depending only on $k'$
and the decay rate $\omega_0(A_{cl})$ of $A_{cl}$.
This gives $x_{cl} \in L_\infty$ as desired, and the solution exists at all times $t>0$.
\hfill $\square$
\end{proof}

\begin{remark}
It is clear that the impact of this result hinges on
computing $K$ for a sufficiently large  sector where
the constant $k$ is as small as possible, as that controls
how far the trajectory $x_{cl}(t)$ may remove herself from the steady state $0$. 
\end{remark}

\subsection{Overshoot}
It has been suggested  in the literature
that slip-stick is avoided as soon as the non-linear
system is globally stabilized. This is obviously misleading,
as any sufficiently strong disturbance will cause
the trajectory $x_t$ to attain the value
$-x_t^0$, however stable the loop. Stability would then only 
make the difference that the
trajectory, after being stuck,  returns to steady state when the effect of the disturbance ceases, while an unstable design might remain
stuck.
Since the non-linearity $\psi(\cdot)$ is concave in the neighborhood
of $0$, 
the term $qx_t+\psi(x_t)= (q+\frac{1}{2}px_t)x_t+{\rm o}(x_t^2)< qx_t$ is
slightly below the linearized term $qx_t$, so that a
linear controller may overestimate
its effect. This may cause overshoot in the 
response to a disturbance, thereby
increasing the risk of slip-stick.
That suggests
optimizing the closed loop against overshoot
in the channel $w \to y_1$, which we realize by simply minimizing the (unweighted) $H_\infty$-norm of $T_{y_1w}(K)$. Reduction of  peak-gain over frequency is known to be a suitable approach for systems with dominant second-order characteristics and  performs equally well in the present case. 
\noindent
In combination with the large magnitude sector 
this leads now to the program
\begin{eqnarray}\label{pgL1}
\begin{array}{ll}
\mbox{minimize} & \| T_{y_1w}(K) \|_\infty\\
\mbox{subject to} & \|\widetilde{T}_{{z}e}(K)\|_{{\rm pk}\_{\rm gn}} \leq 1/r\\
&\|W_uT_{uw}(K)\|_\infty \leq 1\\
&K \in \mathscr K
\end{array}
\end{eqnarray}
where $T_{y_1w}(K)$ is the closed loop transfer
$w \to y_1$ obtained from plant $P$,  
$\widetilde{T}_{{z}e}(K)$ refers to the transfer $e\to {z}$
in plant $\widetilde{P}$, and the channel $w\to z_u$ in plant $P$ is a safeguard against unrealistic control actions. 
This leads to satisfactory
results in the blue case, even though the stability 
certificate is weaker in the sense that
the non-linear closed loop trajectory $x_{cl}(t)$ is only
guaranteed locally exponentially stable and globally bounded.  

\begin{remark}
The peak-gain or peak-to-peak norm $\|\cdot\|_{{\rm pk}\_{\rm gn}}$
is the time domain $L_\infty$-operator norm, which for SISO systems
is equal to the time-domain $L_1$-norm of the impulse response, or the total variation of the 
step response \cite[Sect. 5.2]{boyd_barratt}. It is harder to compute, let alone to optimize, than the 
$H_\infty$-norm, but the bound $\|H\|_\infty \leq \|H\|_{{\rm pk}\_{\rm gn}}$ is known.  
Non-smooth analysis
of $\|\cdot\|_{{\rm pk}\_{\rm gn}}$ is beyond the scope of this
work and will be presented elsewhere. In our experiments
we use the trapezoidal rule to estimate the integral
of the absolute value of the impulse response
of $(\widetilde{P},K)$, and a heuristic to optimize it. Bounds for $\|\cdot\|_{{\rm pk}\_{\rm gn}}$ have been discussed e.g. in \cite{BB:92}, and
a minimization approach via linear programming is discussed
in \cite{diaz} for the case of full order (unstructured)
$K$.
\end{remark}

\section{Experiments}
\label{numerics}
\subsection{Gray scenario}
The gray scenario has been addressed with the approach
(\ref{program1}), where $q_l=-4.8$, $q_u=-4.8$, 
$W_u=\frac{1\mathrm{e}4 s }{s + 2\mathrm{e}5}$. 
Using Kalman reduction  to determine a minimal realization, the finite-difference model with $N=50$
is used to design a preliminary controller
$K_0 \in \mathscr K_5$ in the class of $5^{th}$-order
controllers. The Nyquist test
\cite[Thm. 1]{AN:18} shows that $K_0$ already
stabilizes the linear infinite
dimensional loop exponentially. Moreover, $K_0$
satisfies the sector constraint $\|\widetilde{T}_{{z}e}(K_0)\|_\infty= 0.281 < r^{-1}=1/2.64=0.379$ strictly. After choosing a small enough
tolerance with $\|\widetilde{T}_{{z}e}(K)\|_\infty + \vartheta < r^{-1}$, we check using \cite[Thm. 2]{AN:18} that
$K_0$ satisfies even the 
infinite dimensional sector constraint, so that
the non-linear closed loop
$(G_{\rm nl},K_0)$ is {\em proved} globally exponentially
stable in the sense of Proposition \ref{exponentially}.

In a second phase this controller is further
optimized with the true infinite dimensional
system as described in \cite{AN:18}, maintaining the stability and performance
certificates already achieved during optimization. Ultimately
this leads to the controller
$K_{\rm gray}\in \mathscr K_5$ in (\ref{controllers}) which has the same stability
certificates, and  slightly improved $H_\infty$-performance. This controller was
then tested in non-linear simulations with
spatial discretizations $N=200$. For instance,
in Fig. \ref{test1} (left) an initial condition
$\theta_t(0)< \theta_t^0=\Omega$ representing a deviation of $60\%$ from the steady-state was chosen. The
controller was switched on at time $t=10$ and simulated with a square-wave  disturbance 
occurring at $t=15$ with magnitude $60\%$ of the steady-state. In the gray scenario linear and non-linear
trajectories are almost identical. That slip-stick
may still occur even for this highly stable scenario
is seen in Fig. \ref{noisy} (right), but due to stability
the trajectory $\theta_t$ is able to free herself
and regain speed.

\begin{figure}[h]
\includegraphics[scale=0.45]{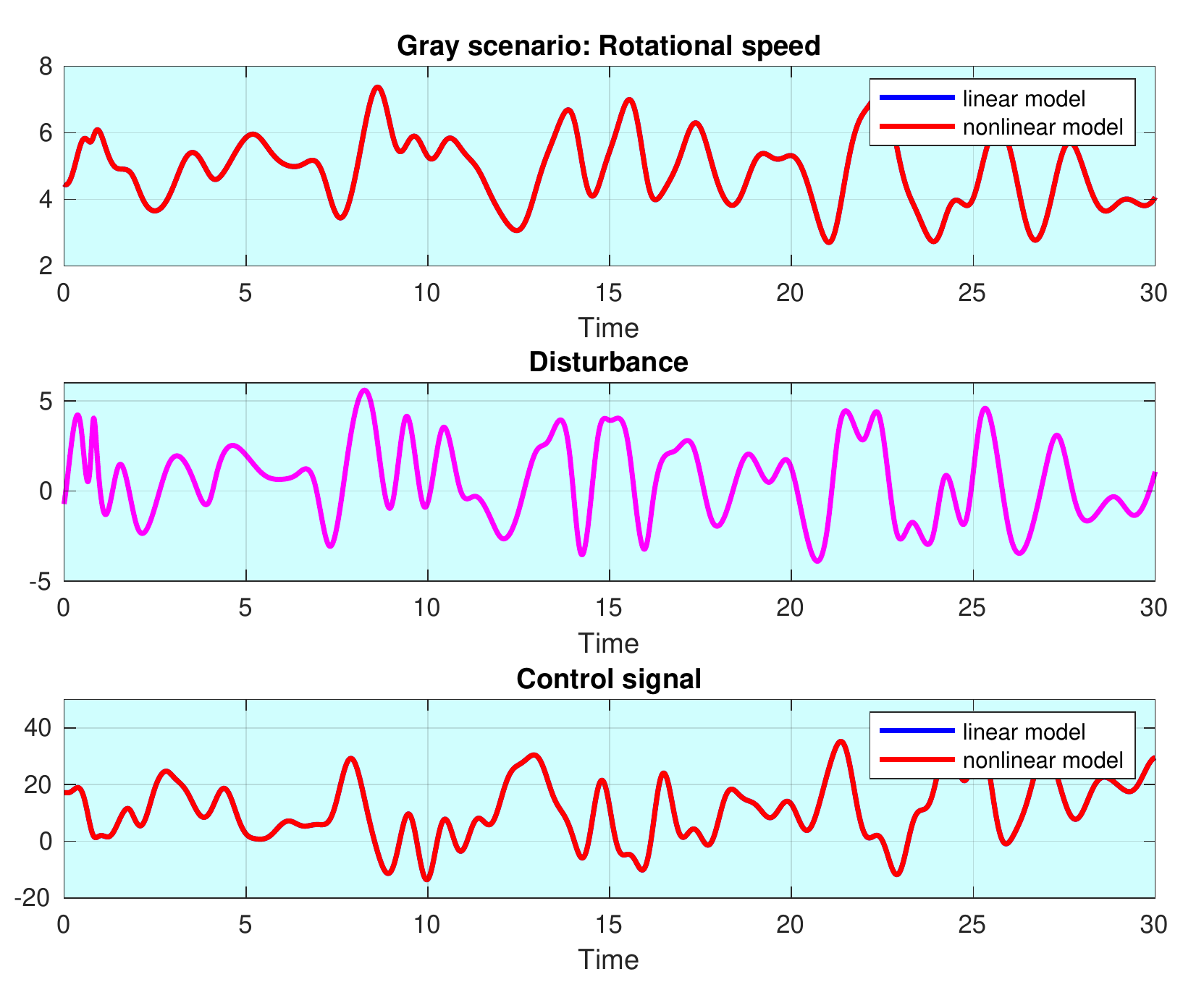}
\includegraphics[scale=0.45]{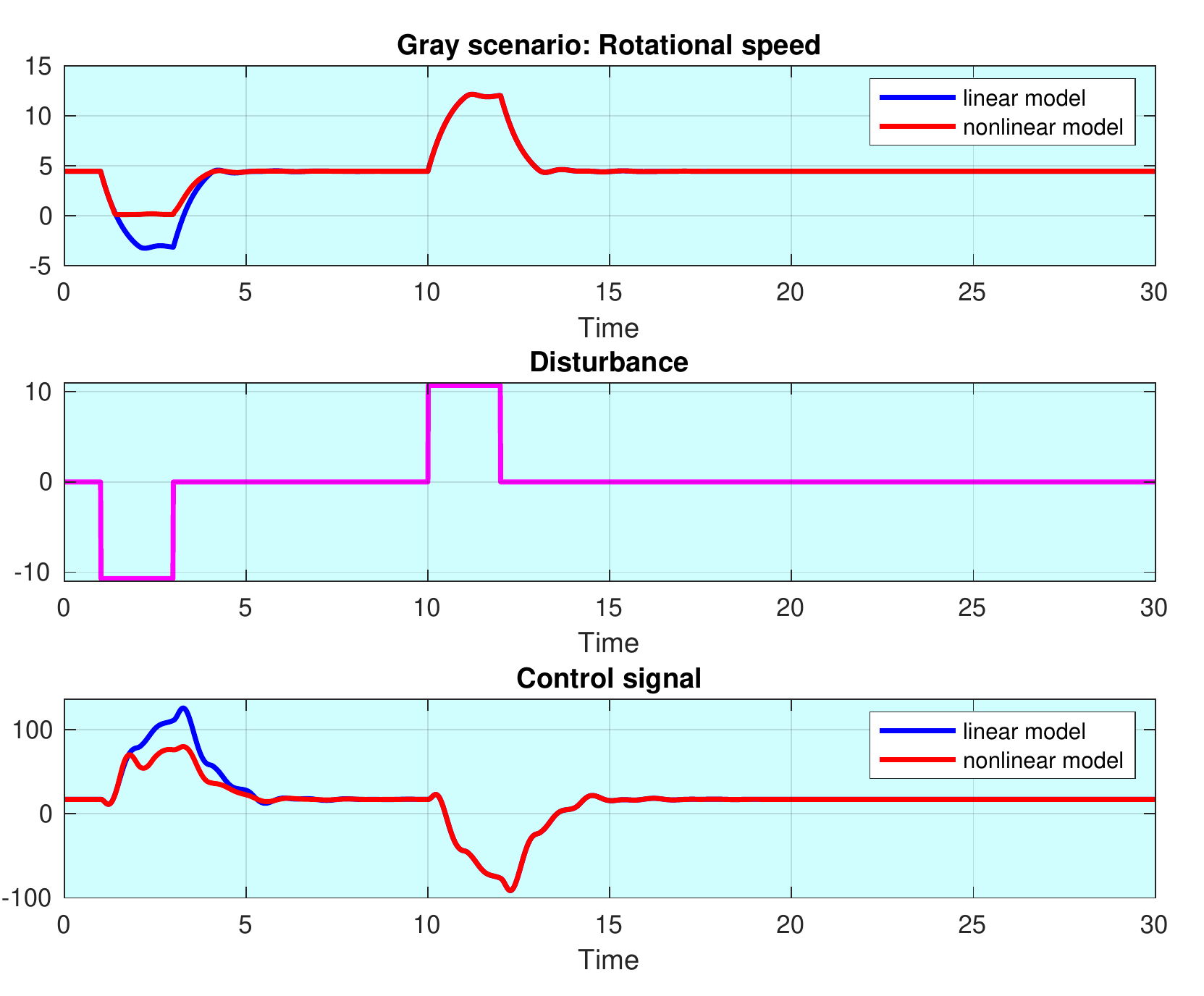}
\caption{Gray scenario: Occasional slip-stick occurs even with global stability.  Oscillatory
disturbance (left). Disturbance at $t=3,10$ (right).\label{noisy}}
\end{figure}

\begin{figure}[h]
\includegraphics[scale=0.45]{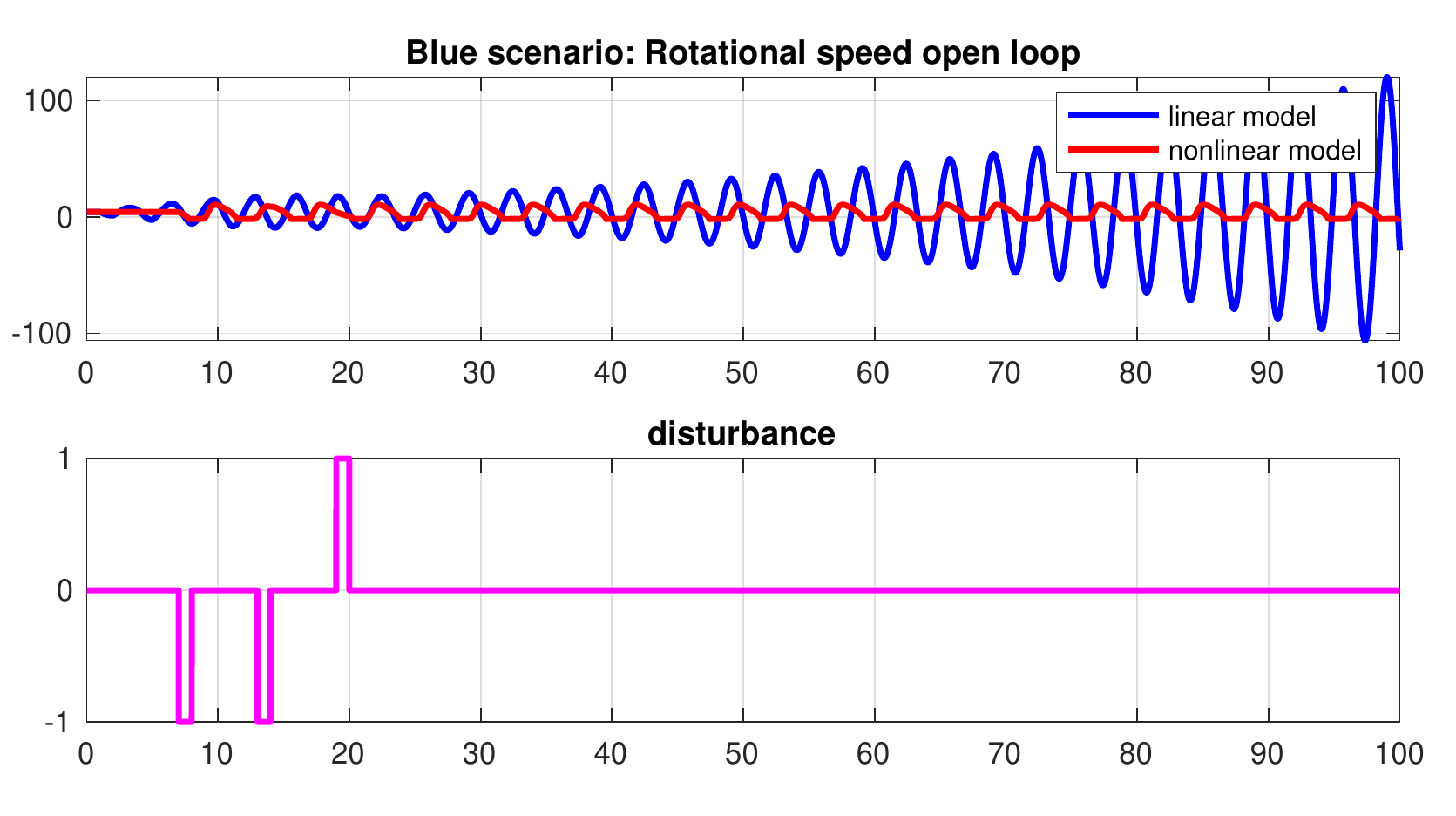}
\includegraphics[scale=0.45]{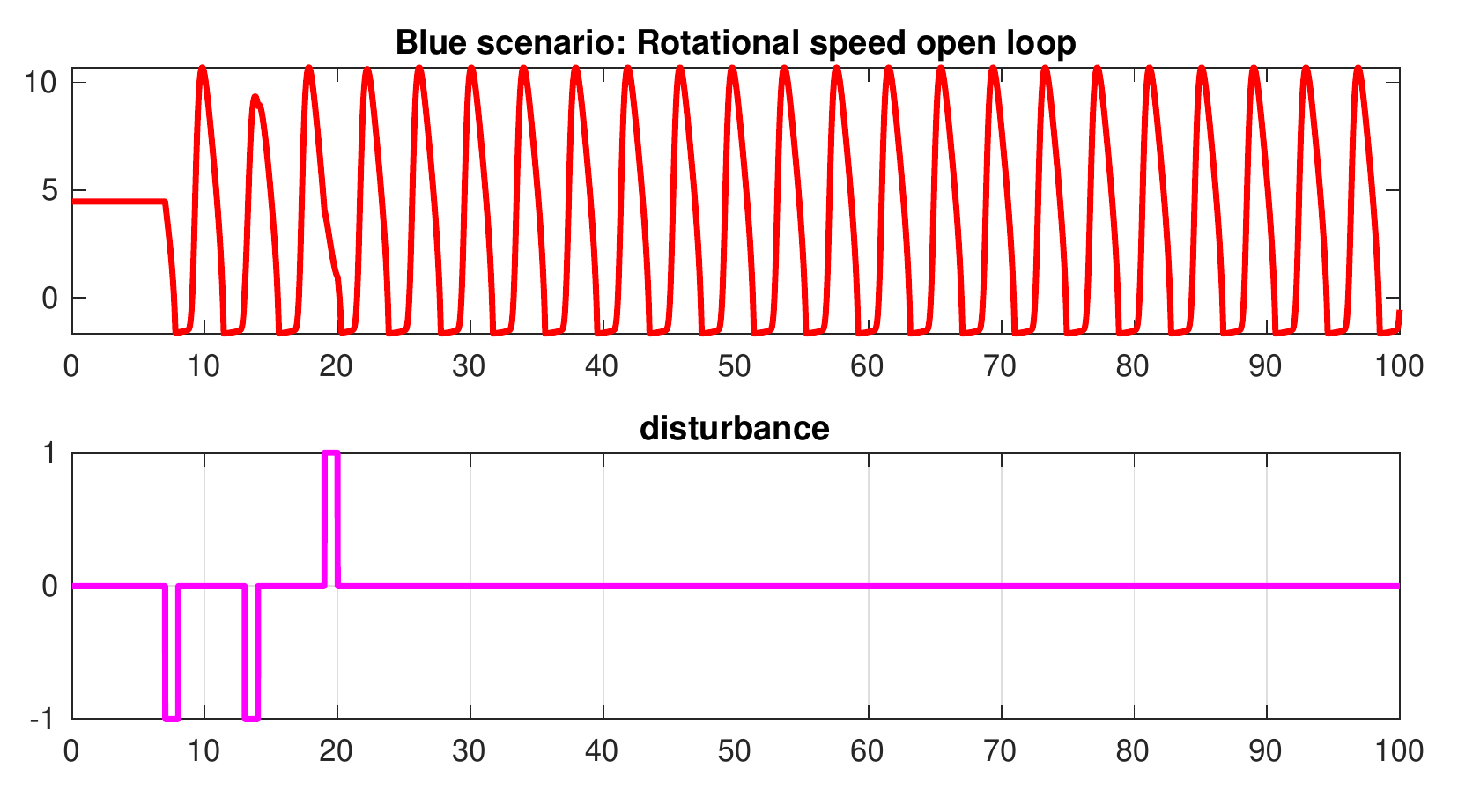}
\caption{Blue scenario: Slip-stick in open loop. \label{openloop}}
\end{figure}

\begin{figure}[h]
\includegraphics[width=7.5cm,height=7cm]{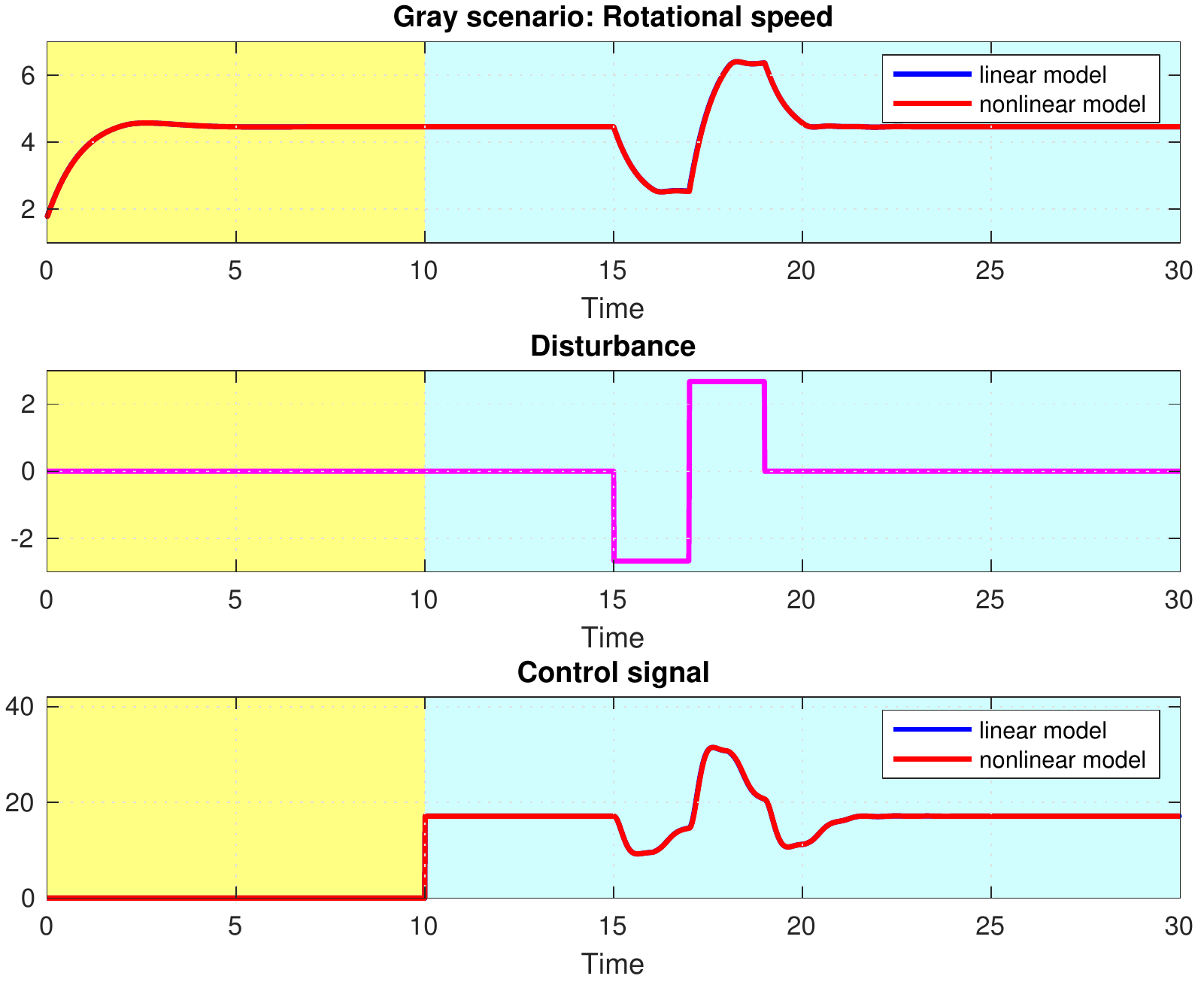}
\includegraphics[width=7.5cm,height=7cm]{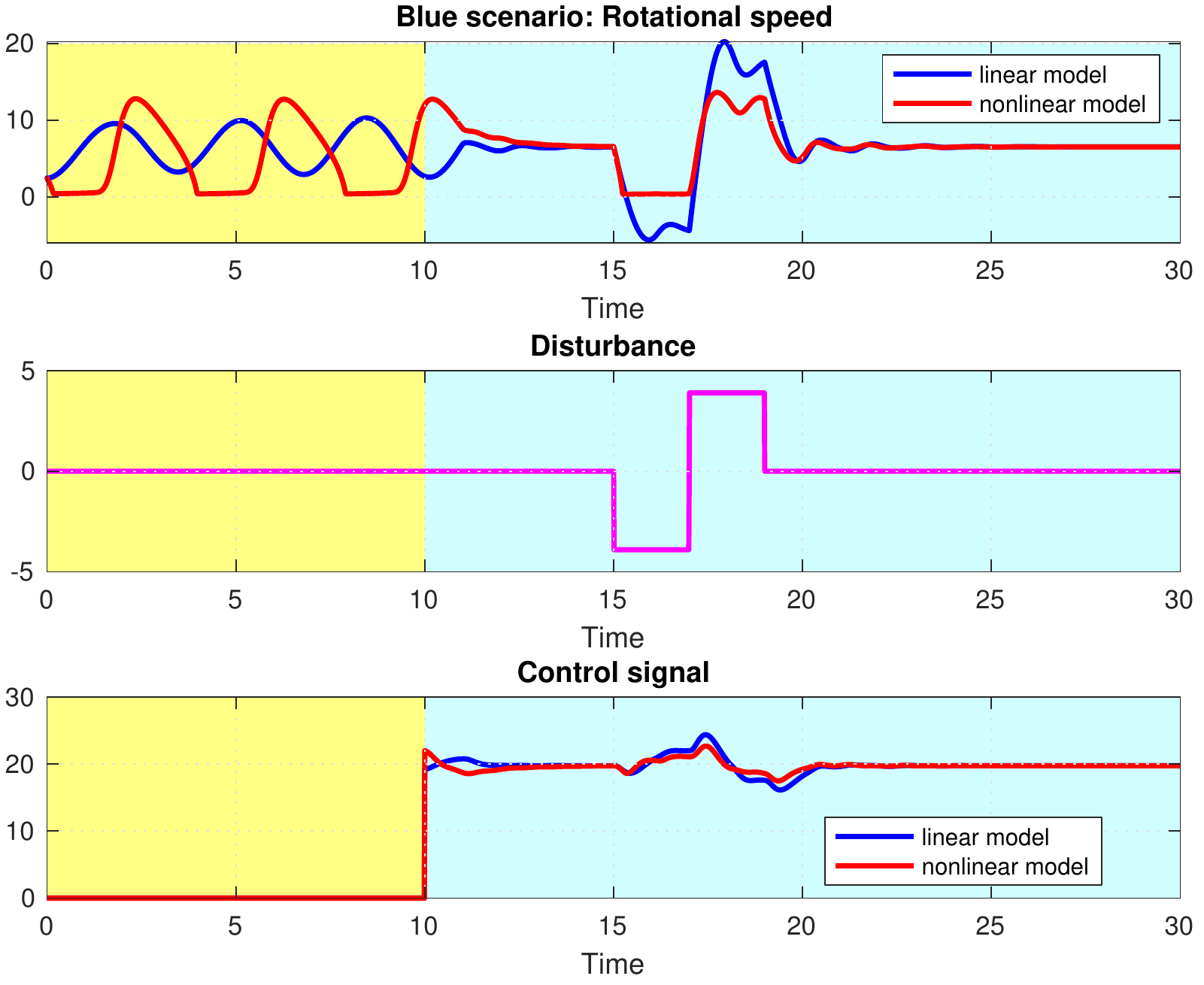}
\caption{Initial value below steady state, control switched on at $t = 10$. Disturbance at $t=15$. Gray left, blue right. \label{test1}}
\end{figure}

\begin{align}
\label{controllers}
\begin{split}
\tiny K_{\rm gray} &= \left[\begin{array}{l|l}A_K & B_K \\\hline C_K & D_K \end{array}\right]  = 
\left[\tiny\begin{array}{lllll|ll}
-0.80046&-7.7472&0&0&0&-13.0415&11.9996\\
-1.9826&-16.5346&41.59&0&0&-25.1033&11.7281\\
0&-1.103&-2.6164&14.2226&0&-12.39&0.012639\\
0&0&-2.5597&-2.6421&6.1304&2.1379&0.89566\\
0&0&0&3.2099&-174.8766&0.90446&-2.2903\\ \hline
0.044385&0.23863&-1.6385&0.48079&2.0247&-2.0699\mathrm{e}{-5}&5.7173\mathrm{e}{-6}\\
\end{array}\right] \\
{\tiny K_{\rm blue}} &= \left[\begin{array}{l|l}A_K & B_K \\\hline C_K & D_K \end{array} \right]  =  \left[\tiny\begin{array}{lllll|ll}
-0.61907&-1.1401&0&0&0&0.25637&0.17058\\
16.7706&-4.1928&-1.523&0&0&-1.2753&0.43077\\
0&8.1615&-6.3251&-1.5961&0&-0.40252&-0.56916\\
0&0&-1.7351&-27.1582&-4.1308&-1.3393&5.0511\\
0&0&0&-17.2811&-83.1511&5.0909&3.4138\\ \hline
-11.1263&3.7925&-1.4411&-2.7711&3.9251&-9.9964\mathrm{e}{-5}&-2.355\mathrm{e}{-6}\\
\end{array}\right]
\end{split}
\end{align}

\begin{figure}[h]
\includegraphics[scale=0.45]{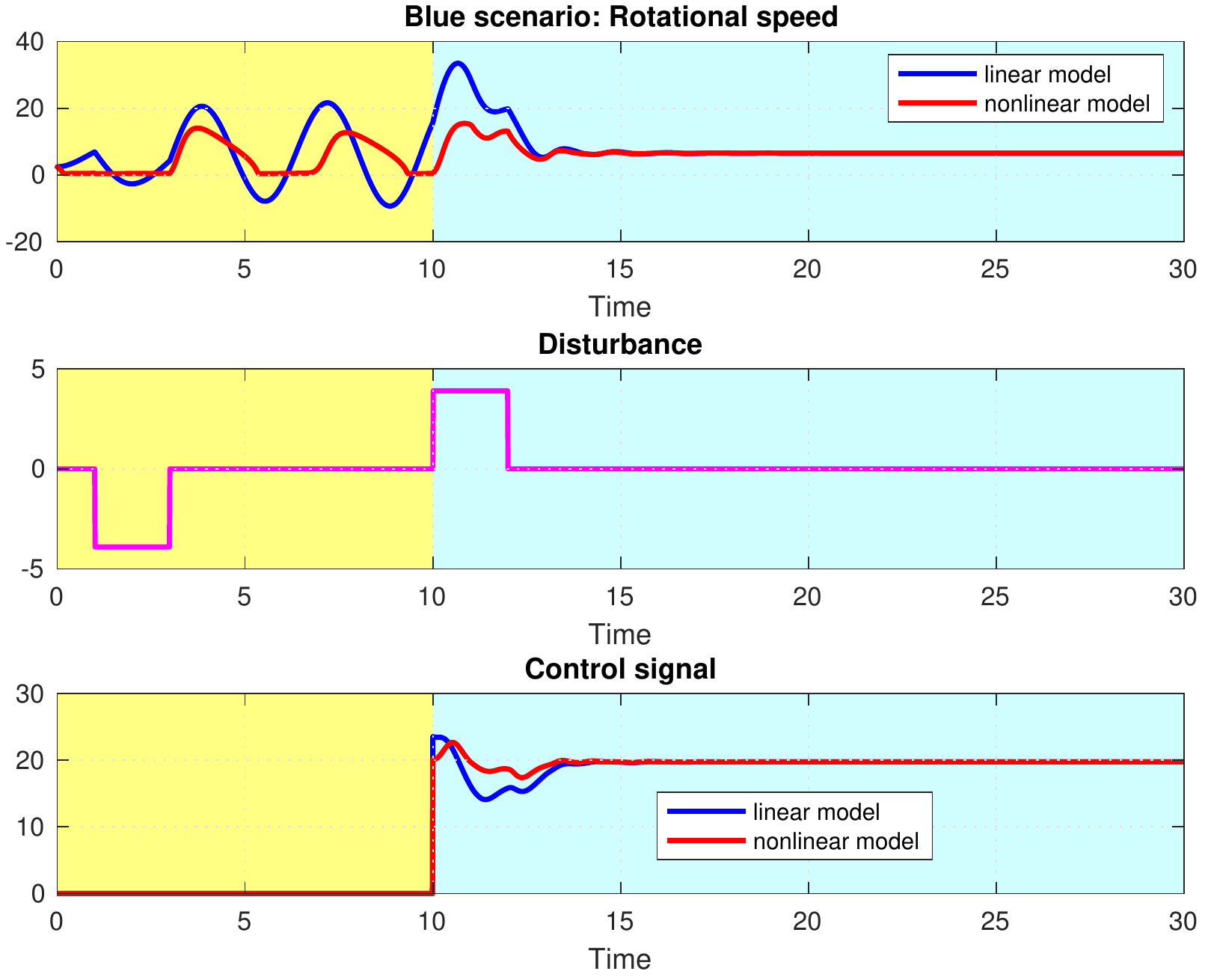}
\includegraphics[scale=0.45]{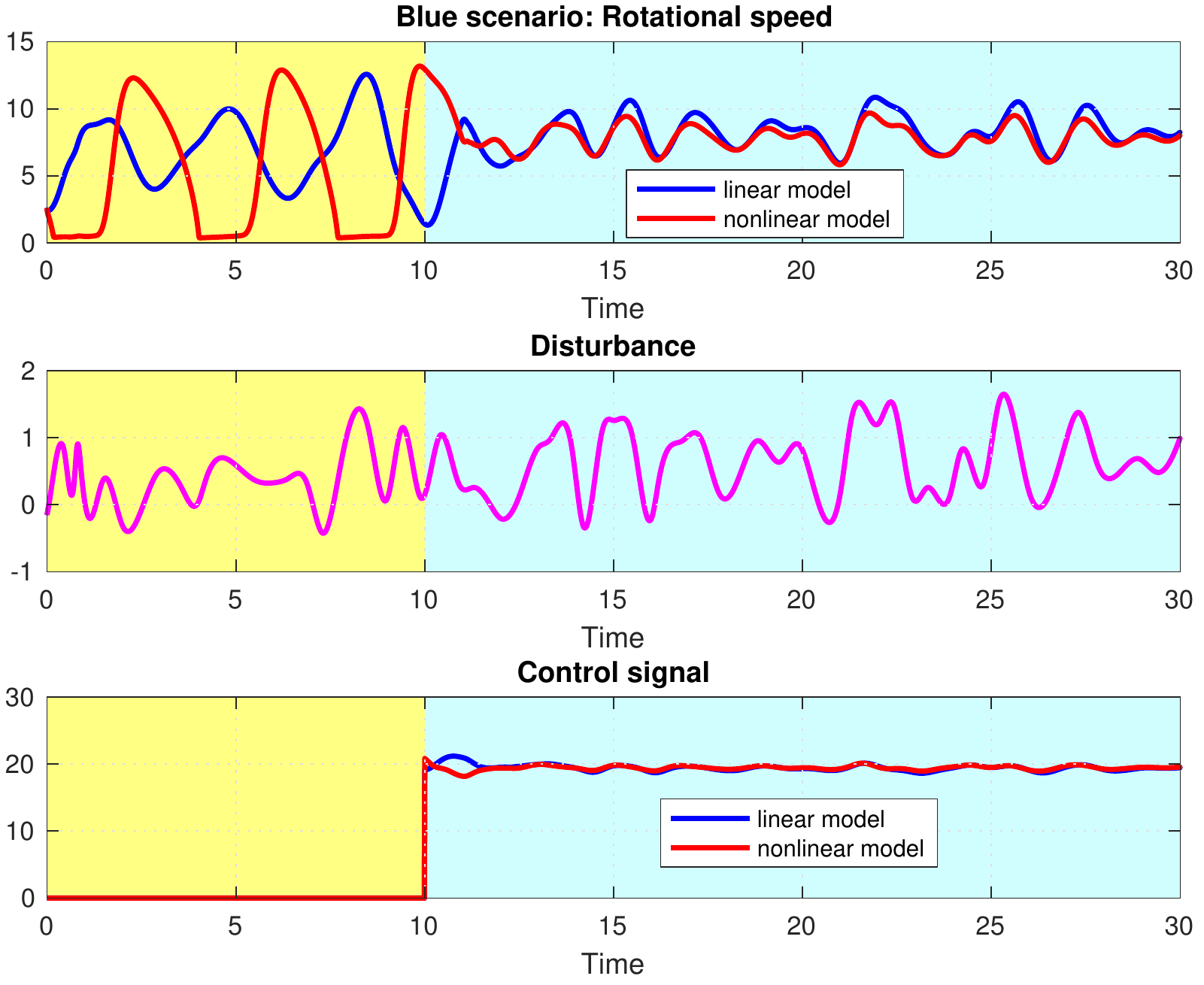}
\caption{Blue scenario: slip-stick caused by large disturbances. Stabilizing feedback with $K_{\rm blue}$ allows the rotational speed to recover. \label{blue_ondulating}}
\end{figure}


\subsection{Blue scenario}
The blue scenario is more challenging as the damping parameter $\lambda$ is between the two critical
values $\lambda_1(\alpha,q) < \lambda < \lambda_2(\alpha,q)$, giving rise to two
unstable poles. Here
slip-stick occurs quickly in open loop (Fig. \ref{openloop}).
While stabilization of the linear closed loop is based on the results
of section \ref{stabilize}, leading to a locally exponentially stable
non-linear closed loop, a global certificate via the sector non-linearity
(\ref{program1}) fails due to the very large primal sector in the blue case. In response, we use
the large magnitude sector constraint in tandem with  overshoot mitigation. Moreover,
a heuristic for the peak-to-peak norm is used, which leads to the mixed program
\begin{eqnarray}
\begin{array}{ll}
\mbox{minimize} & \| T_{y_1w}(K)\|_\infty \\
\mbox{subject to} & \|\widetilde{T}_{ze}(K)\|_2 \leq \rho(r) \\
& \|W_u T_{uw}(K)\|_\infty \leq 1  \\
&K \in \mathscr K_5,
\end{array}
\end{eqnarray}
the parameters now being $q_l = -3$, $q_u = -0.1$ and  $W_u(s) = \frac{1\mathrm{e}4 s }{s + 2\mathrm{e}5}$. 
 
The idea is to employ the $H_2$-norm of the LTI-system in Fig. \ref{zames_figure} as an indirect means
to reduce $\|\widetilde{T}_{ze}(K)\|_{{\rm pk}\_{\rm gn}}$, 
which amounts to replacing the $L_1$-norm of the impulse response by its energy. 
The parameter $\rho(r)$ has been estimated using trial and error  
so that the $H_2$ constraint ensures satisfaction of the peak-gain constraint in program (\ref{pgL1})
with parameter $r$. Starting again with
$K_0\in \mathscr K_5$ synthesized for a finite-difference model with $N=50$, we can then certify exponential stability and $H_\infty$-performance
of the infinite-dimensional loops
$(P,K_0)$ and $(\widetilde{P},K_0)$ via  \cite{AN:18}, and the $H_2$-certificate
with \cite[Lemma 3]{apkarian:19}. This controller is further
optimized in the true infinite dimensional system using the method of
\cite{AN:18,apkarian:19}, leading to the final $K_{\rm blue}\in \mathscr K_5$ in (\ref{controllers}). Posterior certification
shows that $\|\widetilde{T}_{ze}(K_{\rm blue})\|_2 < \rho(r) = 1.3$
implies $\|\widetilde{T}_{ze}(K_{\rm blue})\|_{{\rm pk}\_{\rm gn}}=0.680<r^{-1}= 1/1.45=0.690$, whereby the complementary large
magnitude sector condition is now satisfied in the discretized model with $N=200$. Infinite dimensional certification for
$\|\cdot\|_{{\rm pk}\_{\rm gn}}$ is currently not yet available,
even though this ought to be established along the lines of 
\cite[Lemma 4, Theorem 3]{AN:18} and \cite[Lemma 3]{apkarian:19}. The controller achieves
excellent results in the non-linear simulation. This is shown in Fig. \ref{test1} (right) where an initial condition generates slip-stick in open loop (yellow area). Triggering control at $t = 10$ removes slip-stick and additionally provides rejection against strong and sharp disturbances (blue area).   Similarly, in Fig. \ref{blue_ondulating}
the effect of switching the controller on is tested on two
different disturbances.

It should be mentioned that other ways to address the non-linearity $\psi$ have been discussed.
In \cite{bresch_krstic:14} an adaptive controller for a time varying $q(t)$  was constructed, while
\cite{apkarian:19} discusses parametric robust control for $q \in [\underline{q},\overline{q}]$
as well as gain-scheduling of
$q(t)$ as further possibilities.

\section*{Conclusion}
We have presented a novel method to
design exponentially stabilizing 
regulators of simple implementable structure
for boundary control of 
a wave equation with non-linear boundary anti-damping.
Our results are illustrated in
control of torsional vibrations
in drilling systems, and two scenarios labeled 'gray'
and 'blue' are discussed in detail.
We show that in order to 
avoid slip-stick it is crucial to optimize $H_\infty$-performance of the loop. In particular,
reducing overshoot by way of $H_\infty$ minimization 
proved effective for the more challenging 'blue'
scenario. The 'gray' scenario had
previously been
discussed  in the literature, and here the substantial
improvement of our method over published work 
is that we can design finite-dimensional
exponentially stabilizing 
controllers, which in addition show excellent performance. The 'blue' scenario is new and more
challenging due to inherent instability. We design
finite-dimensional controllers which stabilize the wave
equation locally exponentially, mitigate the slip-stick
effect, and in addition, have
a global boundedness certificate, based on the novel
concept of a large magnitude sector non-linearity.

\end{document}